\newtheorem{proposition}{Proposition}
\newtheorem{definition}{Definition}
\newtheorem{corollary}{Corollary}
\newtheorem{theorem}{Theorem}
\newtheorem{remark}{Remark}
\newtheorem{assumption}{Assumption}
\newtheorem{example}{Example}
\numberwithin{equation}{section}
\newcounter{labelnote}
\let\oldmarginnote\marginnote
\renewcommand*{\marginnote}[1]{%
 \begingroup\strut
  \stepcounter{labelnote}\zsaveposx {marginnote-\thelabelnote}
     \ifnum 0\zposx{marginnote-\thelabelnote}<19000000
      \reversemarginpar
      \oldmarginnote{{}#1}%
     \else
      \normalmarginpar
      \oldmarginnote{{}#1}%
     \fi
 \endgroup%
}
\title{Optimal Control of
 Conditional Value-at-Risk 
\\in Continuous Time\thanks{We would like to thank Prof. Lawrence C. Evans for useful discussions on the uniqueness of viscosity solutions.}
 } 
\author{
Christopher W. Miller\thanks{Department of Mathematics, University of California, Berkeley
({miller@math.berkeley.edu}). Supported in part by NSF GRFP under grant number DGE 1106400.}
\and
 Insoon Yang\thanks{Ming Hsieh Department of Electrical Engineering, University of Southern California ({insoonya@usc.edu}). Supported in part by NSF under CPS:FORCES (CNS1239166).}
}
\date{}
\providecommand{\keywords}[1]{\textbf{Key words.} #1}
\begin{document}
\maketitle


\pagestyle{myheadings}
\thispagestyle{plain}

\begin{abstract}
We consider continuous-time stochastic optimal control problems featuring Conditional Value-at-Risk (CVaR) in the objective. 
The major difficulty in these problems arises from time-inconsistency, which prevents us from directly using dynamic programming.
To resolve this challenge, 
we convert to an equivalent bilevel optimization problem in which the inner optimization problem is 
standard stochastic control.
Furthermore, we provide conditions under which the outer objective function is convex and differentiable. 
We compute the outer objective's value via a Hamilton-Jacobi-Bellman equation and its gradient via the viscosity solution of a linear parabolic equation, which allows us to perform gradient descent.
The significance of this result is that we provide an efficient dynamic programming-based algorithm for optimal control of CVaR without lifting the state-space. 
To broaden the applicability of the proposed algorithm, 
we propose convergent approximation schemes in cases where our key assumptions do not hold and characterize relevant suboptimality bounds.
In addition, we extend our method to a more general class of risk metrics, which includes mean-variance and median-deviation.
We also demonstrate a concrete application to portfolio optimization under CVaR constraints. Our results contribute  an efficient framework for solving time-inconsistent CVaR-based sequential optimization.
\end{abstract}

\keywords{
Conditional value-at-risk, Risk measures, Stochastic optimal control, Time-inconsistency, Hamilton-Jacobi-Bellman equations, Viscosity solutions, Dynamic programming}
\markboth{}{Optimal Control of Conditional Value-at-Risk}

\section{Introduction}

Conditional value-at-risk (CVaR) has received significant attention 
over the past two decades
as a tool for managing risk.
CVaR measures the expected value conditional upon being within some percentage of the worst-case loss scenarios.
More specifically, the CVaR of a random variable $X$, whose distribution has no probability atoms, is defined as
\begin{equation}\nonumber
\mbox{CVaR}_\alpha(X) := \mathbb{E} \left [ X \: | \: X\geq \mbox{VaR}_\alpha (X)\right], \quad \alpha \in (0,1),
\end{equation}
where the value-at-risk (VaR) of $X$ (with the cumulative distribution function $F_X$)  is given by
\begin{equation}\nonumber
\mbox{VaR}_\alpha(X) := \inf \{ x \in \mathbb{R} \: | \: F_X(x) \geq \alpha \}.
\end{equation}
In other words, VaR is equal to $(1-\alpha)$ worst-case quantile of a loss distribution, while CVaR equals the conditional expectation of the loss within that quantile.
When the distribution has a probability atom, the definition of CVaR should be further refined (see~\cite{Rockafellar2002a}).
Note that both functions penalize only when ``bad events" occur.
 
While both VaR and CVaR are risk measures, only CVaR is \emph{coherent} in the sense of Artzner et al.~\cite{Artzner1999}.
 In addition, CVaR takes into account the possibility of tail events where losses exceed VaR.
In fact, one common criticism of VaR stems from its incapability of distinguishing  situations beyond VaR \cite{Rockafellar2002a}.
Due to the superior mathematical properties and practical implications,
CVaR has gained  popularity in risk management.\footnote{
More detailed comparisons between VaR and CVaR, in terms of
stability of statistical estimation and simplicity of optimization procedures, 
can be found in~\cite{Sarykalin2008}.}
In particular, \emph{static} or single-stage optimization with CVaR functions can be efficiently performed via convex and linear programming methods \cite{Rockafellar2000, Mansini2007}.
With the advances in optimization algorithms for CVaR,
this risk measure has shown to be useful in various finance and engineering applications. 

\emph{Dynamic} or sequential optimization of CVaR is often of interest when decisions can be made at multiple stages.
In such an optimal control setting, 
we can optimize a control action at a certain time  based on the information from observations up to that time.
This dynamic control approach enjoys an effective usage of information gathered in the process of making decisions under uncertainty.
The need for 
efficient optimal control tools with CVaR 
is also motivated by emerging dynamic risk management problems in   engineering and finance (e.g., \cite{Qin2013, Yang2017}).

The major challenge in optimal control involving CVaR arises from its \emph{time-inconsistency}  \cite{Artzner2007}.
For example, an optimal strategy for tomorrow constructed today is no longer optimal when considered tomorrow because CVaR is not a time-consistent risk measure.
Mathematically, this time-inconsistency prevents us from directly applying dynamic programming, in contrast with problems involving Markov risk measures \cite{Ruszczynski2010, Cavus2014, Ruszczynski2015} or risk-sensitive criteria \cite{James1994, Fleming1995}.
To overcome this difficulty, several methods have been proposed.
A state-space lifting approach for dynamic programming with a discrete-time and discrete-state Markov decision process (MDP) setting is first proposed in \cite{Bauerle2011}.\footnote{This lifting approach is recently generalized to semi-Markov decision processes in \cite{Huang2016} and is simplified in the application of manufacturing systems \cite{AhmadiJavid2015}.
Note that our method is different in the sense that we avoid extending the state-space by solving an associated outer optimization problem via gradient descent and therefore  reduce computational complexity in general.
} 
Another lifting method and relevant algorithms are developed in \cite{Pflug2016, Chow2015}, relying on a so-called CVaR decomposition theorem \cite{Pflug2016}.
This approach uses a dual representation of CVaR and hence requires optimization over a space of probability densities when solving an associated Bellman equation.
This optimization problem can be effectively solved in discrete-time and finite discrete-state MDPs.
However, it becomes computationally intractable in (uncountable) continuous-state optimal control problems as the space of densities is infinite dimensional.
In \cite{Haskell2015},
a different approach is developed for risk-aware discrete-time finite-state MDPs, which is based on occupation measures. 
Due to the nonconvexity of the resulting infinite-dimensional optimization problem, this method uses a successive linear approximation procedure.

In this paper, we propose a new method to solve continuous-time and continuous-space optimal control problems involving CVaR.
By using a so-called \emph{extremal} representation of CVaR originally proposed in \cite{Rockafellar2000}, we reformulate the optimal control problem as a bilevel optimization problem in which the outer optimization problem is convex and the inner optimization problem is standard stochastic optimal control.
To avoid lifting state-space, we develop a gradient descent-based method to solve the outer optimization problem.
Specifically, we prove the differentiability of the outer objective function and provide a probabilistic interpretation of its gradient under a certain semiconcave approximation.

To develop a computationally efficient and stable gradient descent-based method, it is essential to be able to compute the objective's value and gradient.
The outer objective value can be computed by solving the inner problem: we demonstrate a dynamic programming or equivalently a Hamilton-Jacobi-Bellman (HJB) method to solve the inner problem. 
More importantly, we show that the gradient of the outer objective function can be obtained as the viscosity solution of an associated linear parabolic equation, which we call the \emph{gradient} partial differential equation (PDE), under certain conditions.
These two PDE characterizations complete the proposed gradient decent-based method to solve the bilevel optimization problem.

We use two important assumptions while proving the main results.  One is the semiconcavity of the outer objective function, while the other is the uniform parabolicity of the HJB equation.
We construct convergent approximation schemes which relax each of these assumptions when needed.
For the theoretical (and also practical) implication of the approximations, we provide bounds on the gap between the optimal outer objective value and that of the perturbed problem.

In the final section of the paper, we demonstrate a practical implementation of our methodology in an optimal investment problem subject to CVaR constraints. To our knowledge, this is the first solution of a dynamic portfolio optimization problem subject to tail-risk constraints in continuous time. {{}The closest comparisons to our results are given by approximate equilibrium solutions \cite{DongSircar2014}, mean-field control approaches \cite{Pfeiffer2016}, or in mean-variance frameworks \cite{PedersenPeskir2013}.}

The rest of this paper is organized as follows:
In Section~\ref{setup}, we introduce optimal control problems involving a class of risk metrics including CVaR.
We present the main results of this paper in Section~\ref{Section:MainResults}, which are used to construct a gradient descent-based method to solve the reformulated bilevel optimization problem.
Relevant assumptions imposed in the main results are explicitly relaxed in Section~\ref{Section:Approximation} using convergent approximation schemes.
Lastly, we demonstrate the performance of the proposed method through an example of mean-CVaR portfolio selection in Section~\ref{Section:Example}.

\section{Problem Setup} \label{setup}

\subsection{Controlled Process}

Let $(\Omega,\mathcal{F},\mathbb{P})$ be a probability space supporting a standard $d$-dimensional Brownian motion $W$ with an associated filtration $\{\mathcal{F}_t\}_{0\leq t\leq T}$ satisfying the usual conditions. Let $\mathbb{A}$ be a compact and finite-dimensional set of controls.  Define the set $\mathcal{A}$ of admissible control strategies
as the collection of all $\mathcal{F}_t$-progressively measurable processes which are valued in $\mathbb{A}$ almost surely.

The control $A \in \mathcal{A}$ affects a system state of interest through the following stochastic differential equation:
\begin{equation}\label{sde}
\begin{split}
dX^A_t &= \mu(X^A_t,A_t)\,dt + \sigma(X^A_t,A_t)\,dW_t\\
X^A_0 &= \bm{x}_0 \in \mathbb{R}^n.
\end{split}
\end{equation}
We assume that $\mu:\mathbb{R}^n\times\mathbb{A}\to\mathbb{R}^n$ and $\sigma:\mathbb{R}^n\times\mathbb{A}\to\mathbb{R}^{n\times d}$ are continuous functions such that, for some $K>0$,
\begin{eqnarray}\nonumber
\|\mu(x,a)-\mu(x',a)\|+\|\sigma(x,a)-\sigma(x',a)\| & \leq & K \|x-x'\| \\
\|\mu(x,a)\| + \|\sigma(x,a)\| & \leq & K(1+\|x\|+\|a\|)\nonumber
\end{eqnarray}
for all $x,x'\in\mathbb{R}^n$ and for all $a\in\mathbb{A}$.
Under these conditions, for each control $A\in\mathcal{A}$,
 there exists a unique strong solution, $X^A$, of the SDE \eqref{sde}.
 
\subsection{Optimal Control with a Class of Risk Measures}\label{Subsection:ExtremalRiskMeasures}

The main goal of this paper is to provide an efficient algorithm for solving the following stochastic optimal  control problem with a non-standard objective\footnote{
Note that this formulation includes the following running cost problems:
$\inf\limits_{A\in\mathcal{A}}\rho \left (\int_0^T r(X^A_{1,t}, A_t) dt + q(X_{1,T}^A) \right)$.
For such problems, we introduce a new state, $X_{2,t} := \int_0^t r(X^A_{1,s}, A_s) ds$, and rewrite the optimization problem as
$\inf\limits_{A \in \mathcal{A}} \rho (X_{2,T}^A +  q(X_{1,T}^A)) =: \inf\limits_{A \in \mathcal{A}} \rho (g(X_{T}^A))$, where $X_t := (X_{1,t}, X_{2,t})$ and $g(x) = x_2 + q(x_1)$.
}:
\begin{equation}\label{opt}
\inf\limits_{A\in\mathcal{A}}\rho(g(X^A_T)),
\end{equation}
where $g:\mathbb{R}^n\to\mathbb{R}$ is a given convex cost function and $\rho:L^2(\Omega)\to\mathbb{R}$ is of a class of risk measures defined below.

\begin{definition}\label{Def:extremal}
A function $\rho: L^2(\Omega) \to \mathbb{R}$ is said to be an \emph{extremal risk measure} if there exists a convex function $f:\mathbb{R}\times\mathbb{R}^m\to\mathbb{R}$ such that
\begin{equation} \nonumber
\rho(\xi) = \inf\limits_{y\in\mathbb{R}^m}\mathbb{E}\left[f(\xi,y)\right].
\end{equation}
\end{definition}

For simplicity, we assume that $f$ is convex with at most quadratic growth and that $g$ is convex and Lipschitz continuous. However, we note that these growth conditions can easily be relaxed on a case-by-case basis by noting that $X^A_T\in L^p(\Omega)$ for all $p<\infty$.

The primary motivation of this definition is the following \emph{extremal} formula involving conditional value-at-risk (CVaR) of a random variable $\xi$, whose distribution has no probability atom\footnote{
In our problem setting, if $g(X_T^A)$ has an atom, \eqref{Eqn:ExtremalForm:CVaR} is interpreted as CVaR$^{-}$, called the lower CVaR \cite{Rockafellar2000}.
}\cite{Rockafellar2000}:
\begin{equation}\label{Eqn:ExtremalForm:CVaR}
\text{CVaR}_\alpha\left[\xi\right] = \inf\limits_{y\in\mathbb{R}}\mathbb{E}\left[y + \frac{1}{1-\alpha}\left(\xi-y\right)^+\right], \quad \alpha \in (0,1).
\end{equation}
The intuition behind this equality is that the optimal $y$ is equal to VaR at probability $\alpha$. Then, CVaR is equal to VaR plus the expected losses exceeding VaR divided by the probability of these losses occurring, $1-\alpha$.

We show that several additional risk metrics of interest in application can be written in this form. Some of these are not (coherent) risk measures in the sense of Artzner et al.~\cite{Artzner1999}. However, we justify our nomenclature of ``extremal risk measure'' by providing simple conditions on $f$ under which $\rho$ is a coherent risk measure in Appendix~\ref{app:1}. We emphasize that for ease of exposition, we will generally refer to $\rho$ as an extremal risk measure in this paper even when these conditions are not satisfied.

\begin{example}
The variance of $\xi\in L^2(\Omega)$ can be expressed as
\begin{equation}\label{Eqn:ExtremalForm:Var}
\text{Var}\left[\xi\right] = \inf\limits_{y\in\mathbb{R}}\mathbb{E}\left[\left(\xi-y\right)^2\right].
\end{equation}
Note that, with $f(\xi,y) := (\xi-y)^2$, variance is of the form of Definition~\ref{Def:extremal}. Although variance is not a risk measure, our method can handle control problems with variance-related criteria including mean-variance optimal control: $\mathbb{E}\left[\xi\right] + \lambda \text{Var}\left[\xi\right] = \inf\limits_{y\in\mathbb{R}}\mathbb{E}\left[ \xi + \lambda(\xi-y)^2\right]$.

Similarly, the median absolute deviation (MAD) of $\xi\in L^2(\Omega)$ can be expressed as
\begin{equation}\label{Eqn:ExtremalForm:MAD}
\text{MAD}\left[\xi\right] = \mathbb{E}\left[|\xi-\text{Med}\left[\xi\right]|\right] = \inf\limits_{y\in\mathbb{R}}\mathbb{E}\left[|\xi-y|\right].
\end{equation}
Again, with $f(\xi,y) := |\xi-y|$, MAD is of the form of Definition~\ref{Def:extremal}. MAD is  a deviation risk measure (e.g., \cite{Ogryczak1999}).
\end{example}

\begin{example}
Our method can handle control problems with criteria including various combinations of~\eqref{Eqn:ExtremalForm:CVaR}, \eqref{Eqn:ExtremalForm:Var}, and \eqref{Eqn:ExtremalForm:MAD}. For example,
\begin{itemize}
\item Mean-CVaR: 
\begin{equation}\nonumber
\mathbb{E}\left[\xi\right]  + \lambda \text{CVaR}_\alpha[\xi] = \inf\limits_{y\in\mathbb{R}}\mathbb{E}\left[\xi + \lambda\left(y + \frac{1}{1-\alpha} \left(\xi-y\right)^+ \right)\right];
\end{equation}
\item Variance-CVaR Tradeoff:
\begin{equation}\nonumber
\text{Var}\left[\xi\right]+\lambda\text{CVaR}_\alpha\left[\xi\right] = \inf\limits_{y\in\mathbb{R}^2}\mathbb{E}\left[\left(\xi-y_1\right)^2+\lambda\left(y_2+\frac{1}{1-\alpha}\left(\xi-y_2\right)^+\right)\right];
\end{equation}
\item $\text{CVaR}_\alpha\text{-CVaR}_\beta$ Tradeoff: 
\begin{equation}\nonumber
\text{CVaR}_\alpha\left[\xi\right]+\lambda\text{CVaR}_\beta\left[\xi\right] = \inf\limits_{y\in\mathbb{R}^2}\mathbb{E}\left[\left(\begin{array}{c}1\\\lambda\end{array}\right)\cdot\left(\begin{array}{c}y_1+(1-\alpha)^{-1}\left(\xi-y_1\right)^+\\ y_2+(1-\beta)^{-1}\left(\xi-y_2\right)^+\end{array}\right)\right].
\end{equation}
\end{itemize}
We will provide a more concrete example in Section~\ref{Section:Example}.
\end{example}

One important feature of the optimization problem \eqref{opt}  is that, in general, it is a time-inconsistent  problem. 
This makes it impossible to directly apply dynamic programming because there is not an analogue of the law of iterated expectations for non-Markov risk measures.
To tackle this issue,
one stream of research efforts focuses on developing \emph{dynamic risk measures} with which sequential optimization or optimal control can be performed in a time-consistent manner (e.g., \cite{Riedel2004, Cheridito2006, Fritelli2006, Kloppel2007, Artzner2007, Ruszczynski2006, Ruszczynski2010}).
Another class of research activities acknowledges time-inconsistency as an inherent property and proposes two different solution concepts.
The first approach is to reinterpret the problem as a game against one's future self. This approach is used, for example, in \cite{Bjork2010} and leads to a PDE system. 
The second method is to re-write the original problem in a form where we can apply dynamic programming in an indirect way. 
This approach has been used to reduce to dynamic programming in a higher-dimensional state space or to a sequence of iterated standard control problems in \cite{Bauerle2011, Miller2015, Karnam2016, Bayraktar2016, Pflug2016, Yang2017}. 
The rationale  in this paper is similar in spirit to this so-called \emph{indirect} dynamic programming method.
However, one key advantage of the proposed approach is that the structure of the extremal risk measures allows us to perform optimization over an extra variable where the objective function can be evaluated by dynamic programming involving no additional state variables. We construct optimal time-inconsistent controls by solving an equivalent bilevel optimization problem, without lifting the state-space.

\subsection{Outline of Main Results}

We summarize the main results of this paper by the following:
\begin{enumerate}
\item We demonstrate an equivalence between time-inconsistent stochastic control problem involving extremal risk measures and bilevel optimization problem in Proposition~\ref{Thm:BilevelOptimization}.
\item We provide conditions under which the outer optimization problem is convex in Theorem~\ref{Thm:Convexity}. Furthermore, under additional conditions, we prove differentiability and provide a probabilistic interpretation of the gradient in Theorem~\ref{Thm:GradientCharacterization}.
\item We demonstrate a dynamic programming approach to solving the inner problem and provide conditions under which we have a PDE characterization of the gradient of the outer problem in Theorem~\ref{Thm:GradientPDE}. This allows the use of gradient descent in solving the bilevel optimization problem.
\item Finally, we provide convergent approximations which relax two of the key assumptions imposed on the problem. In Theorems~\ref{Thm:bound2} and \ref{thm:bound1}, we provide suboptimality bounds under each approximation scheme.
\end{enumerate}

Using these results, we can solve time-inconsistent optimal control problems involving extremal risk measures via a gradient descent solution of a bilevel optimization problem. As an example of how the approximation schemes and gradient descent are utilized, consider the explicit problem:
\begin{equation} \nonumber
\inf\limits_{A\in\mathcal{A}}\text{CVaR}_\alpha\left[X^A_T\right].
\end{equation}
We can solve this problem via the following procedure:
\begin{itemize}
\item Step 1: Recall $f^{\mbox{\tiny CVaR}}(\xi, y) := y + (1-\alpha)^{-1}{\left(\xi-y\right)^+}$. This does not satisfy our uniform semiconcavity assumption and hence we first apply inf-convolution to have
\begin{equation}\nonumber
f_\epsilon (\xi, y) := \inf\limits_{z\in\mathbb{R}}\left[f^{\mbox{\tiny CVaR}}(\xi,z) + \frac{\left(y-z\right)^2}{2\epsilon}\right],
\end{equation}
which is uniformly semiconcave in $y$ ($\epsilon$-semiconcave approximation). 

\item Step 2: We note that we can reformulate the perturbed optimal control problem involving $f_\epsilon$ as the following bilevel optimization problem:
\begin{equation}\nonumber
\inf\limits_{A\in\mathcal{A}}\inf\limits_{y\in\mathbb{R}}\mathbb{E}\left[f_\epsilon(X^A_T,y)\right] = \inf\limits_{y\in\mathbb{R}}\left[V_\epsilon(y) := \inf\limits_{A\in\mathcal{A}}\mathbb{E}\left[f_\epsilon(X^A_T,y)\right]\right],
\end{equation}
where the inner problem is a standard stochastic control problem.

\item Step 3: We solve the outer problem using a gradient descent algorithm in $y$. At each iteration with $y \in \mathbb{R}^m$, we solve the inner problem via dynamic programming by solving an HJB equation to evaluate $V(y)$. In addition, we compute its gradient value $DV(y)$ by solving a linear parabolic equation, which we call the \emph{gradient PDE}.

\item Step 4: When the gradient descent algorithm converges,
we obtain a minimizer $y_\epsilon^\star$ of the outer problem and its associated minimizer $A_\epsilon^\star$ of the inner problem.
We quantify a suboptimality bound of this solution using Theorem~\ref{Thm:bound2}.
The gap between this solution and an optimal solution tends to zero as $\epsilon \to 0$.

\end{itemize}

In this high-level description of the proposed method,
we notice its several advantages.
First of all, the proposed gradient descent approach does not require us to lift the state-space in the case of terminal cost problems.
This is a considerable advantage over existing methods which regard $y$ as another state variable because the computational complexity of dynamic programming increases exponentially with the dimension of state space. Secondly, the gradient PDE provides a systematic numerical approximation of $DV(y)$ at any $y \in \mathbb{R}^m$.
Therefore, it avoids using derivative-free optimization algorithms which are not convergent in general or
automatic differentiation tools such as 
finite differencing which can be inaccurate when the objective function contains (numerical) noise
 \cite{Nocedal2006}.
 Lastly, the analytical studies to relax the assumptions used to prove the main results
broaden the applicability of the proposed method.
In particular, we show that the approximate solution obtained from an $\epsilon$-semiconcave approximation converges to the true optimal solution as $\epsilon$ tends to $0$.

{{} We summarize the complete series of approximations by stating the following general version of our results. We emphasize that significantly stronger results are provided throughout the paper when allowing various additional assumptions.
We break the statement of the following theorem into three pieces. These correspond to (1) re-writing the optimal control as a bi-level problem, (2) introducing an approximation scheme where proximal supergradients of the top-level problem can be computed in terms of a system of PDEs, and (3) a convergence result for the approximation scheme.

\begin{theorem}\label{Theorem:Summary}
\begin{enumerate}[(1)]
\item Let $f:\mathbb{R}\times\mathbb{R}^m\to\mathbb{R}$ and $g:\mathbb{R}^n\to\mathbb{R}$ both be convex and Lipschitz continuous. Define an extremal risk measure $\rho:L^2(\Omega)\to\mathbb{R}$ as
\[\rho(\xi) := \inf\limits_{y\in\mathbb{R}^m}\mathbb{E}\left[f(\xi,y)\right].\]
Furthermore, define
\[V(y) := \inf\limits_{A\in\mathcal{A}} \mathbb{E}\left[f\left(g\left(X^A_T\right),y\right)\right].\]
Then
\[p^\star := \inf\limits_{A\in\mathcal{A}}\rho(g(X^A_T)) = \inf_{y\in\mathbb{R}^m} V(y).\]

\item For any $\epsilon,\eta>0$, define
\[f_\epsilon(x,y) := \inf\limits_{z\in\mathbb{R}^m}\left[f(x,z)+\frac{\|y-z\|^2}{2\epsilon}\right].\]
Let $\hat{W}$ be an independent $n$-dimensional Brownian motion, and let $\hat{\mathcal{F}}$ be the filtration generated by the joint process $(W,\hat{W})$. Let $\hat{\mathcal{A}}$ represent the collection of all $\hat{\mathcal{F}}$-adapted processes valued in $\mathbb{A}$. For each control $A\in\hat{\mathcal{A}}$, define $\hat{X}^{A,\eta}$ as the unique strong solution of the perturbed SDE
\[d\hat{X}^{A,\eta}_t = \mu\left(\hat{X}^{A,\eta}_t,A_t\right)\,dt + \sigma\left(\hat{X}^{A,\eta}_t,A_t\right)\,dW_t+\eta\,d\hat{W}_t.\]
Define
\[V_{\epsilon,\eta}(y) := \inf\limits_{A\in\hat{\mathcal{A}}}\mathbb{E}\left[f_\epsilon(g(\hat{X}^{A,\eta}_T),y)\right]\]
and denote by $\partial V_{\epsilon,\eta}(y)$ the collection of proximal supergradients of $V_{\epsilon,\eta}$ at $y$.

Then we have the following results:
\begin{enumerate}
\item For any $y\in\mathbb{R}^m$, if we define
\[DV_{\epsilon,\eta}(y) := \mathbb{E}\left[D_y f_\epsilon(g(\hat{X}^{A,\eta}_T),y)\right],\]
then $DV_{\epsilon,\eta}(y) \in \partial V_{\epsilon,\eta}(y)$.

\item For any $y\in\mathbb{R}^m$, $V_{\epsilon,\eta}(y)$ and $DV_{\epsilon,\eta}(y)$ may be computed in terms of the solutions of two partial differential equations. The value function is related to an HJB equation, while the proximal supergradient is related to a formal linearization of the HJB equation.
\end{enumerate}

\item Let $y^\star$ be a minimizer of $V$ and $y^\star_{\epsilon,\eta}$ be a minimizer of $V_{\epsilon,\eta}$. Then
\[V(y^\star_{\epsilon,\eta})\leq V(y^\star) + C\left(\epsilon+\eta\right),\]
where $C>0$ is a constant depending only on the Lipschitz constants of $f$, $g$, $\mu$, and $\sigma$. That is, $V(y^\star_{\epsilon,\eta}) \to p^\star$ linearly as $\epsilon,\eta\to 0$.
\end{enumerate}
\end{theorem}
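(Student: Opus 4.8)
The plan is to treat the three parts sequentially, since each builds on the prior. Part~(1) is an immediate consequence of Fubini/Tonelli and interchange of infima: writing $\rho(g(X_T^A)) = \inf_{y}\mathbb{E}[f(g(X_T^A),y)]$ by definition of the extremal risk measure, we get
\[
\inf_{A\in\mathcal{A}}\rho(g(X_T^A)) = \inf_{A\in\mathcal{A}}\inf_{y\in\mathbb{R}^m}\mathbb{E}[f(g(X_T^A),y)] = \inf_{y\in\mathbb{R}^m}\inf_{A\in\mathcal{A}}\mathbb{E}[f(g(X_T^A),y)] = \inf_{y\in\mathbb{R}^m}V(y).
\]
The only thing to check is that the inner object $\mathbb{E}[f(g(X_T^A),y)]$ is well-defined and finite for each fixed $(A,y)$, which follows from $X_T^A\in L^p(\Omega)$ for all $p<\infty$ together with the at-most-quadratic (here Lipschitz) growth of $f$ and $g$; the order of the two infima may be swapped freely since no continuity or attainment is needed. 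I expect this part to be essentially a citation of Proposition~\ref{Thm:BilevelOptimization}.

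For part~(2), the substance is already contained in the earlier theorems, so the work is bookkeeping. First, $f_\epsilon$ is the Moreau--Yosida regularization of $f$ in the $y$-variable; standard convex-analytic facts give that $f_\epsilon$ is convex, Lipschitz (with the same constant as $f$), $C^{1,1}$ in $y$, and $f_\epsilon\uparrow f$ as $\epsilon\downarrow 0$. The added nondegenerate noise $\eta\,d\hat W_t$ makes the associated HJB equation uniformly parabolic, so that the hypotheses of Theorem~\ref{Thm:Convexity}, Theorem~\ref{Thm:GradientCharacterization}, and Theorem~\ref{Thm:GradientPDE} are all met for the perturbed problem $V_{\epsilon,\eta}$. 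Item~(a) then follows from Theorem~\ref{Thm:GradientCharacterization}: differentiating under the expectation (justified by the $C^{1,1}$ bound on $D_yf_\epsilon$ and dominated convergence), the candidate $DV_{\epsilon,\eta}(y)=\mathbb{E}[D_yf_\epsilon(g(\hat X_T^{A,\eta}),y)]$ — evaluated along an optimal control $A$ for the value $V_{\epsilon,\eta}(y)$ — lies in the proximal superdifferential $\partial V_{\epsilon,\eta}(y)$, where the proximal (rather than ordinary) supergradient is what the semiconcavity of $V_{\epsilon,\eta}$ buys us. Item~(b) is a direct invocation of the HJB characterization of the inner problem and of Theorem~\ref{Thm:GradientPDE}, which identifies $DV_{\epsilon,\eta}(y)$ with the viscosity solution of the linearized (gradient) PDE; there is nothing new to prove beyond noting that uniform parabolicity holds because $\sigma\sigma^\top+\eta^2 I\succeq \eta^2 I>0$.

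Part~(3) is the heart of the matter and is where I expect the real work. The strategy is a two-step perturbation estimate, bounding $|V(y)-V_{\epsilon,\eta}(y)|$ uniformly in $y$ and then comparing minimizers. For the $\epsilon$-direction: since $0\le f(x,z)-f_\epsilon(x,z)$ and the infimum defining $f_\epsilon$ is attained at some $z^\ast$ with $\|y-z^\ast\|\le C\sqrt{\epsilon}$ (because $f$ is Lipschitz, the quadratic penalty forces $\|y-z^\ast\|^2/(2\epsilon)\le \mathrm{Lip}(f)\|y-z^\ast\|$), one gets $0\le f(x,y)-f_\epsilon(x,y)\le \mathrm{Lip}(f)\cdot C\sqrt\epsilon$ — but in fact the sharper bound $f(x,y)-f_\epsilon(x,y)\le \tfrac12\mathrm{Lip}(f)^2\epsilon$ holds, which is what yields the linear-in-$\epsilon$ rate; I would derive this directly from the Lipschitz estimate on the proximal map. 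For the $\eta$-direction: a standard Gronwall/moment estimate on the SDE gives $\mathbb{E}\|\hat X_T^{A,\eta}-X_T^{A}\|\le C\eta$ uniformly over $A\in\mathcal{A}$ (coupling the two SDEs with the same $W$ and estimating the extra $\eta\,d\hat W_t$ term), and then Lipschitz continuity of $g$ and of $f$ transfers this to $|\mathbb{E}[f(g(\hat X_T^{A,\eta}),y)]-\mathbb{E}[f(g(X_T^A),y)]|\le C\eta$; a minor point is that enlarging the filtration from $\mathcal{A}$ to $\hat{\mathcal{A}}$ does not change the infimum of the original (un-perturbed) functional, since the extra randomness is independent and the cost depends only on $X_T$. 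Combining, $|V(y)-V_{\epsilon,\eta}(y)|\le C(\epsilon+\eta)$ uniformly in $y$. Finally, the standard minimizer-comparison argument closes it:
\[
V(y^\star_{\epsilon,\eta}) \le V_{\epsilon,\eta}(y^\star_{\epsilon,\eta}) + C(\epsilon+\eta) \le V_{\epsilon,\eta}(y^\star) + C(\epsilon+\eta) \le V(y^\star) + 2C(\epsilon+\eta),
\]
using optimality of $y^\star_{\epsilon,\eta}$ for $V_{\epsilon,\eta}$ in the middle inequality. The main obstacle is making the uniform-in-$A$ moment estimate and the uniform-in-$y$ regularization estimate genuinely uniform (so that the constant $C$ depends only on the Lipschitz data, not on the horizon-dependent $L^p$ norms of $X_T^A$), and in particular handling the interplay between the inf-convolution in $y$ and the infimum over controls — one must be careful that the near-optimal $z^\ast$ in the definition of $f_\epsilon$ can be chosen measurably/uniformly so that the bound survives taking $\inf_A$ on both sides.
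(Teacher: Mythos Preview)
Your proposal is correct and follows essentially the same route as the paper: part~(1) is Proposition~\ref{Thm:BilevelOptimization}, part~(3) is the uniform-in-$y$ estimate $|V(y)-V_{\epsilon,\eta}(y)|\le C(\epsilon+\eta)$ (split into the Moreau--Yosida piece and the SDE-perturbation piece, exactly as in Propositions~\ref{prop:ic_estimate}--\ref{Prop:EpsilonInequality2}) followed by the three-line minimizer comparison you wrote out.

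One caveat on part~(2): you assert that the hypotheses of Theorems~\ref{Thm:Convexity}, \ref{Thm:GradientCharacterization}, and \ref{Thm:GradientPDE} are all met for the perturbed problem, but those results require Assumption~\ref{Assumption:Convexity} (joint convexity of $(A,y)\mapsto f(g(X_T^A),y)$), which Theorem~\ref{Theorem:Summary} does \emph{not} assume. The inf-convolution and the added noise give you Assumptions~\ref{Assumption:UniformSemiconcavity} and~\ref{Assumption:UniformEllipticity}, but not Assumption~\ref{Assumption:Convexity}. You clearly sense this, since you land on the correct conclusion (proximal supergradient rather than gradient), but the argument you should actually invoke is Proposition~\ref{Prop:Semiconcavity} together with the Corollaries in Section~\ref{Subsection:Nonconcave}: semiconcavity of $V_{\epsilon,\eta}$ alone yields $\mathbb{E}[D_yf_\epsilon(g(\hat X_T^{A,\eta}),y)]\in\partial V_{\epsilon,\eta}(y)$ along an optimal control, and uniform parabolicity guarantees such a control exists. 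The paper's proof routes through exactly these weaker results. Fix the citations and your write-up matches the paper's.
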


To summarize, this theorem provides a complete convergent algorithm for solving the optimal control of a $\rho(g(X^A_T))$. First, we show how to re-write this as a bilevel minimization problem, the lower-level of which is a standard optimal stochastic control problem. There is in general no special structure in the upper-level problem (e.g., convexity), so we demonstrate a very general semi-concave approximation\footnote{In many cases, such as the main example of this paper, we can in fact show that the approximation is a convex minimization problem, which allows more specialized techniques to be applied.}, the objective function and proximal supergradient of which can be computed in terms of solutions of a PDE system. Lastly, we demonstrate how a solution of the approximate problem converges to an optimal solution of the original problem.

We will present a proof of Theorem~\ref{Theorem:Summary} in Section~\ref{Subsection:ProofMainTheorem} after developing the relevant machinery and approximations in Section~\ref{Section:MainResults} and Section~\ref{semi_app}--\ref{Subsection:Nonconcave}.
}

\section{Main Results: Gradient Descent and Viscosity Solutions}\label{Section:MainResults}

\subsection{Equivalent Bilevel Optimization}

Recall that our goal is to solve the generalized stochastic control problem
\begin{equation}\label{Eqn:GeneralizedOptimization}
\inf\limits_{A\in\mathcal{A}}\rho(g(X^A_T)),
\end{equation}
where $\rho$ is a fixed extremal risk measure and $g(X^A_T)$ is the state-dependent cost when the control $A$ is executed.

In general, \eqref{Eqn:GeneralizedOptimization} is a time-inconsistent nonlinear stochastic optimal control problem to which we cannot apply dynamic programming. However, we show how to use the structure of $\rho$ as an extremal risk measure to convert this into an equivalent bilevel optimization problem.

{{} \begin{proposition}[Bilevel Optimization]\label{Thm:BilevelOptimization}
We can write the problem of dynamic optimization over an extremal risk measure as
\begin{equation}\label{Eqn:BilevelOptimization}
\inf\limits_{A\in\mathcal{A}}\rho\left(g(X^A_T)\right) = \inf\limits_{y\in\mathbb{R}^m}V(y),
\end{equation}
where $V$ is defined via a standard stochastic optimal control problem of the form
\begin{equation}\label{Eqn:StandardControl}
V(y) := \inf\limits_{A\in\mathcal{A}}\mathbb{E}\left[f(g(X^A_T),y)\right].
\end{equation}
\end{proposition}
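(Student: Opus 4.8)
The plan is to unfold both sides of \eqref{Eqn:BilevelOptimization} into a single joint infimum over the pair $(A,y)$ and then commute the two infima; the actual work is in making each object well-defined before the interchange. First I would check that the risk measure can even be applied along the dynamics. Fixing $A\in\mathcal{A}$, the standard $L^p$ moment estimates for the SDE \eqref{sde} under the assumed Lipschitz and linear-growth conditions on $\mu,\sigma$ (with $\mathbb{A}$ compact, so the bounds are uniform in $A$) give $X^A_T\in L^p(\Omega)$ for every $p<\infty$; since $g$ is Lipschitz, hence of linear growth, $g(X^A_T)\in L^p(\Omega)$ for every $p<\infty$, in particular $g(X^A_T)\in L^2(\Omega)$. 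Definition~\ref{Def:extremal} then applies verbatim and yields, for each fixed $A$,
\[\rho\bigl(g(X^A_T)\bigr)=\inf_{y\in\mathbb{R}^m}\mathbb{E}\bigl[f(g(X^A_T),y)\bigr].\]

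Next I would rule out any $\infty-\infty$ ambiguity. Convexity of $f$ on $\mathbb{R}\times\mathbb{R}^m$ forces $f$ to dominate an affine function, so each $\mathbb{E}[f(g(X^A_T),y)]>-\infty$; and the at-most-quadratic growth of $f$ together with $g(X^A_T)\in L^2(\Omega)$ makes each such expectation finite. Hence $(A,y)\mapsto\mathbb{E}[f(g(X^A_T),y)]$ is a finite-valued function on $\mathcal{A}\times\mathbb{R}^m$, and $V(y)=\inf_{A\in\mathcal{A}}\mathbb{E}[f(g(X^A_T),y)]$ is a well-defined real number (again using the uniform moment bounds for the finiteness of the infimum).

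With well-definedness secured, I would finish with the elementary identity $\inf_{A}\inf_{y}h(A,y)=\inf_{(A,y)}h(A,y)=\inf_{y}\inf_{A}h(A,y)$, valid for any real-valued $h$ on a product set. Applied to $h(A,y):=\mathbb{E}[f(g(X^A_T),y)]$ and combined with the displayed identity for $\rho$, this gives
\[\inf_{A\in\mathcal{A}}\rho\bigl(g(X^A_T)\bigr)=\inf_{A\in\mathcal{A}}\inf_{y\in\mathbb{R}^m}\mathbb{E}\bigl[f(g(X^A_T),y)\bigr]=\inf_{y\in\mathbb{R}^m}\inf_{A\in\mathcal{A}}\mathbb{E}\bigl[f(g(X^A_T),y)\bigr]=\inf_{y\in\mathbb{R}^m}V(y),\]
which is exactly \eqref{Eqn:BilevelOptimization}.

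I do not expect a genuine analytic obstacle: the interchange of the two infima is unconditional, so the only point requiring care is the bookkeeping of the two previous paragraphs — that $g(X^A_T)\in L^2(\Omega)$ so $\rho$ is applicable, and that the inner expectations never take the value $\pm\infty$. The pitfall to avoid is over-claiming from such a short argument: it proves only equality of optimal \emph{values}, not attainment of the joint infimum, and one may \emph{not} commute the $y$-infimum with the expectation (doing so would in general strictly lower the value and is not the definition of $\rho$). Existence of a minimizing $y$ and a measurable selection of the inner-optimal control are properly deferred to the later convexity and differentiability results (Theorems~\ref{Thm:Convexity} and \ref{Thm:GradientCharacterization}).
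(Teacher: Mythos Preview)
Your proposal is correct and follows the same approach as the paper, which simply states that the proof is straightforward from the definition of extremal risk measures (i.e., unfold Definition~\ref{Def:extremal} and interchange the two infima). Your additional bookkeeping on the integrability of $g(X^A_T)$ and finiteness of the expectations is a welcome elaboration of what the paper leaves implicit.
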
}
The proof is straightforward by the definition of extremal risk measures.
\begin{remark}
The value $V(y)$ depends on the initial value $\bm{x}_0$ of $X^A$.
For  simplicity, however, we suppress the dependency and implicitly assume that we fix the initial value as $\bm{x}_0$ for the rest of this paper.
\end{remark}

At this point, we have converted the time-inconsistent stochastic control problem \eqref{Eqn:GeneralizedOptimization} to an equivalent bilevel optimization problem \eqref{Eqn:BilevelOptimization} involving a standard stochastic control problem \eqref{Eqn:StandardControl}. 
For convenience, we call the right-hand sides of \eqref{Eqn:BilevelOptimization} and \eqref{Eqn:StandardControl}
 the \emph{outer optimization} problem and the \emph{inner optimization} problem, respectively.

\subsection{A Note on Assumptions}
We record three main assumptions. These are used to obtain various properties of the bilevel optimization problem along the way, and we discuss where applicable they may be relaxed in Section~\ref{Section:Approximation}.

\begin{assumption}[Uniform Semiconcavity]\label{Assumption:UniformSemiconcavity}
The function $y\mapsto f(x,y)$ is uniformly  semiconcave for all $x\in\mathbb{R}$. That is, there exists $M>0$ such that
\begin{equation}\nonumber
f(x,y+\xi) \leq f(x,y) + \xi\cdot D_y f(x,y) + \frac{1}{2}M\|\xi\|^2
\end{equation}
for all $(x,y)\in\mathbb{R}\times\mathbb{R}^m$ and for all $\xi\in\mathbb{R}^m$. 
\end{assumption}

It is important to note that the convexity of $f$ together with Assumption~\ref{Assumption:UniformSemiconcavity} guarantees that the map $y\mapsto f(x,y)$ is continuously differentiable for each $x\in\mathbb{R}^m$. This regularity will ultimately carry over to various smoothness results of the outer optimization problem in our bilevel optimization problem, i.e., the right-hand side of \eqref{Eqn:BilevelOptimization}. 
\begin{remark}
Relaxing Assumption~\ref{Assumption:UniformSemiconcavity} is particularly important from the application perspective.
Note, for example, that CVaR and MAD do not satisfy this uniform semiconcavity assumption. 
For the clarity of presentation, however, we begin with this seemingly restrictive assumption but later relax it by using the inf-convolution operator in Section~\ref{semi_app}.
\end{remark}

\begin{assumption}[Uniform Parabolicity]\label{Assumption:UniformEllipticity}
There exists an $\epsilon >0$ such that
\begin{equation}\nonumber
\sigma(x,a)\sigma(x,a)^\top - \epsilon I_n\text{ is positive semidefinite}
\end{equation}
for all $(x,a)\in\mathbb{R}^n\times\mathbb{A}$.
\end{assumption}

The main use of Assumption~\ref{Assumption:UniformEllipticity} is to construct optimal controls to the stochastic control problem~\eqref{Eqn:StandardControl}. In particular, this assumption guarantees that the viscosity solution to the Hamilton-Jacobi-Bellman equation is smooth, \cite{Fleming2006,Lions1983}. In Section~\ref{uni_app}, we relax this constraint by adding additional independent Brownian motions to the dynamics of $X^A$.

\begin{assumption}[Convexity]\label{Assumption:Convexity}
The control set $\mathbb{A}$ is convex and the map $(A,y) \mapsto f(g(X^A_T),y)$ is jointly convex, almost surely.
\end{assumption}

The primary use of Assumption~\ref{Assumption:Convexity} is to guarantee convexity of the outer optimization problem in our bilevel optimization formulation. In particular, this allows us to implement a gradient descent algorithm with guaranteed convergence to a global minimum. When Assumption~\ref{Assumption:Convexity} does not hold, we can still compute so-called proximal supergradients and run a descent algorithm which converges to a local minimizer.

\begin{example}\label{Example:SufficientConditionsConvexity}
We emphasize the following three sufficient conditions, each of which guarantees Assumption~\ref{Assumption:Convexity} holds (e.g., \cite{Boyd2004}):
\begin{itemize}
\item $A\mapsto g(X^A_T)$ is affine,
\item  $A\mapsto g(X^A_T)$ is convex\footnote{We acknowledge that checking the convexity of $A \mapsto X_T^A$ beyond the criteria proposed in \cite{Azhmyakov2008, Colaneri2014} is often a nontrivial task. 
It is  a  topic of future research.} and $x\mapsto f(x,y)$ is non-decreasing convex for each $y\in\mathbb{R}^m$, or
\item $A\mapsto g(X^A_T)$ is concave and $x\mapsto f(x,y)$ is non-increasing convex for each $y\in\mathbb{R}^m$.
\end{itemize}
Recall that $f^{\mbox{\tiny CVaR}}(\xi, y) := y + (1-\alpha)^{-1}{\left(\xi-y\right)^+}$, $x \mapsto f^{\mbox{\tiny CVaR}}(x, y)$ is non-decreasing convex.
Therefore, if $A \mapsto g(X_T^A)$ is convex, Assumption~\ref{Assumption:Convexity} holds.
\end{example}

{{} We note that Assumption~\ref{Assumption:Convexity} is quite strong. However, it proves  verifiable in some practical applications in engineering and finance such as risk-aware demand response, electric vehicle charging control, inventory control and portfolio management (e.g., \cite{Rajagopal2012, Yang2015}). As a concrete example, consider the Mean-CVaR portfolio optimization problem considered in Section~\ref{Section:Example}. 
For clarity of exposition, we retain this as a main assumption but emphasize the analogous results which hold even when this assumption is broken.}

\subsection{Analytical Properties of the Outer Objective Function $V$}

In this section we investigate some analytical properties of the outer objective function $V$. 
We begin by showing the convexity of $V$.
Then, we present a semiconcavity estimate of $V$ at points where there exists an optimal control. 
Furthermore, we use this estimate and convexity to show the differentiability of $V$  at such points and to provide a probabilistic representation of the gradient.
\begin{theorem}[Convexity]\label{Thm:Convexity}
Suppose that Assumption~\ref{Assumption:Convexity} holds.
Then, the outer objective function $V$ is convex.
\end{theorem}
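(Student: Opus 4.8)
The plan is to show directly from the definition of $V$ that it inherits convexity from the joint convexity guaranteed by Assumption~\ref{Assumption:Convexity}, using the standard fact that a pointwise infimum of a jointly convex function over one of its arguments is convex in the remaining argument. The only mild subtlety is that the infimum here is taken over the (possibly non-convex-looking) set $\mathcal{A}$ of admissible control processes, and that the objective depends on $A$ only through the random variable $f(g(X^A_T),y)$; I will handle this by convex-combining the controls themselves, which is legitimate because Assumption~\ref{Assumption:Convexity} posits that $\mathbb{A}$, and hence $\mathcal{A}$, is convex.

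Concretely, I would proceed as follows. Fix $y_0,y_1\in\mathbb{R}^m$ and $\lambda\in[0,1]$, and set $y_\lambda := \lambda y_1 + (1-\lambda) y_0$. Let $\varepsilon>0$ be arbitrary and pick near-optimal controls $A_0, A_1\in\mathcal{A}$ with
\[
\mathbb{E}\left[f\left(g\left(X^{A_0}_T\right),y_0\right)\right] \leq V(y_0) + \varepsilon,
\qquad
\mathbb{E}\left[f\left(g\left(X^{A_1}_T\right),y_1\right)\right] \leq V(y_1) + \varepsilon.
\]
Define the pointwise convex combination $A_\lambda := \lambda A_1 + (1-\lambda) A_0$, which again lies in $\mathcal{A}$ because $\mathbb{A}$ is convex and progressive measurability is preserved under this operation. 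Since $V(y_\lambda) \leq \mathbb{E}\left[f\left(g\left(X^{A_\lambda}_T\right),y_\lambda\right)\right]$ by definition of $V$ as an infimum, and since Assumption~\ref{Assumption:Convexity} gives, almost surely,
\[
f\left(g\left(X^{A_\lambda}_T\right),y_\lambda\right)
\leq \lambda\, f\left(g\left(X^{A_1}_T\right),y_1\right) + (1-\lambda)\, f\left(g\left(X^{A_0}_T\right),y_0\right),
\]
taking expectations and combining with the near-optimality of $A_0,A_1$ yields
\[
V(y_\lambda) \leq \lambda V(y_1) + (1-\lambda) V(y_0) + \varepsilon.
\]
Letting $\varepsilon\downarrow 0$ gives convexity of $V$.

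The main obstacle — really the only point requiring care — is justifying that the convex combination $A_\lambda$ of two admissible controls is again admissible and that the joint-convexity inequality applies to it; this is exactly what Assumption~\ref{Assumption:Convexity} is engineered to supply (convexity of $\mathbb{A}$ plus joint convexity of $(A,y)\mapsto f(g(X^A_T),y)$ almost surely), so no additional work on the SDE \eqref{sde} is needed beyond invoking that assumption. One should also note that $V(y)>-\infty$ and $V(y)<+\infty$ for all $y$ (finiteness follows from the at-most-quadratic growth of $f$, the Lipschitz continuity of $g$, and the standard moment bounds $X^A_T\in L^p(\Omega)$ for all $p<\infty$), so that the convexity inequality is not vacuous and $V$ is a genuine convex function $\mathbb{R}^m\to\mathbb{R}$. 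Finally, I would remark that the infimum in the definition of $V(y)$ need not be attained for this argument; attainment is only relevant later, for the semiconcavity and differentiability results in Theorem~\ref{Thm:GradientCharacterization}.
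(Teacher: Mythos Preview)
Your proof is correct and follows essentially the same approach as the paper: take $\varepsilon$-suboptimal controls at two points, form their convex combination (admissible by convexity of $\mathbb{A}$), apply the almost-sure joint convexity from Assumption~\ref{Assumption:Convexity}, take expectations, and let $\varepsilon\to 0$. Your additional remarks on finiteness of $V$ and on the infimum not needing to be attained are accurate and do not alter the argument.
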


The convexity of $V$ motivates us to use a (sub)gradient descent-type algorithm for solving the outer optimization problem.
Under the semiconcavity assumption (Assumption~\ref{Assumption:UniformSemiconcavity}),
we justify gradient descent approaches by proving the differentiability of $V$.

To that end, we first record a semiconcavity estimate. It is important to note that this estimate does not depend on the convexity of $V$ or Assumption~\ref{Assumption:Convexity}. This result connects the semiconcavity of $f$ to that of $V$.

\begin{proposition}\label{Prop:Semiconcavity}
Suppose that Assumption~\ref{Assumption:UniformSemiconcavity}  holds.
For any fixed $y\in\mathbb{R}^m$, we assume there exists $A\in\mathcal{A}$
such that\footnote{We assume the existence of an optimal control in this proposition. 
However, we will show that this assumption is valid in the next subsection by constructing an optimal control from an associated HJB equation under Assumption~\ref{Assumption:UniformEllipticity}.} 
\begin{equation}\nonumber
V(y) = \mathbb{E}\left[f(g(X^A_T),y)\right].
\end{equation}
Then, we have
\begin{equation}\nonumber
V(y+\xi) \leq V(y) + \xi\cdot\mathbb{E}\left[D_yf(g(X^A_T),y)\right] + \frac{1}{2}M\|\xi\|^2
\end{equation}
for all $\xi\in\mathbb{R}^m$.
\end{proposition}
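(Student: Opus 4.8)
The plan is to exploit suboptimality: the optimal control $A$ at the point $y$ is, in general, only suboptimal at the shifted point $y+\xi$, so plugging $A$ into the inner problem at $y+\xi$ gives an upper bound on $V(y+\xi)$. Concretely, I would write
\[
V(y+\xi) = \inf\limits_{A'\in\mathcal{A}}\mathbb{E}\left[f(g(X^{A'}_T),y+\xi)\right] \leq \mathbb{E}\left[f(g(X^A_T),y+\xi)\right],
\]
using the same control $A$ that attains $V(y)$. The point is that the state process $X^A_T$ does not change when we perturb $y$, since $y$ enters only through $f$ and not through the dynamics \eqref{sde}; this is exactly the structural advantage of the extremal formulation.

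Next I would apply the uniform semiconcavity of $f$ in its second argument (Assumption~\ref{Assumption:UniformSemiconcavity}) pointwise, with $x = g(X^A_T(\omega))$ held fixed: for almost every $\omega$,
\[
f(g(X^A_T),y+\xi) \leq f(g(X^A_T),y) + \xi\cdot D_y f(g(X^A_T),y) + \tfrac{1}{2}M\|\xi\|^2.
\]
Taking expectations of this inequality (which is legitimate since $f$ has at most quadratic growth and $X^A_T\in L^p$ for all $p<\infty$, so all three terms are integrable — in particular $D_y f$ is bounded on bounded sets and grows at most linearly) yields
\[
\mathbb{E}\left[f(g(X^A_T),y+\xi)\right] \leq \mathbb{E}\left[f(g(X^A_T),y)\right] + \xi\cdot\mathbb{E}\left[D_y f(g(X^A_T),y)\right] + \tfrac{1}{2}M\|\xi\|^2.
\]
Finally, using the assumed optimality $\mathbb{E}[f(g(X^A_T),y)] = V(y)$ and chaining with the suboptimality bound above gives precisely the claimed estimate. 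Note the constant $M$ is the same one from Assumption~\ref{Assumption:UniformSemiconcavity}, and, as the statement emphasizes, nothing here used convexity of $V$ or Assumption~\ref{Assumption:Convexity}.

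There is no serious obstacle; the only point requiring mild care is the justification of passing the expectation through the pointwise semiconcavity inequality, i.e., checking integrability of $D_y f(g(X^A_T),y)$ and of $f(g(X^A_T),y+\xi)$. This follows from the quadratic-growth hypothesis on $f$ together with the moment bounds on $X^A_T$ guaranteed by the standing Lipschitz/linear-growth assumptions on $\mu,\sigma$; the derivative $D_y f$ exists everywhere because convexity plus uniform semiconcavity forces $y\mapsto f(x,y)$ to be $C^1$, as already remarked after Assumption~\ref{Assumption:UniformSemiconcavity}. The measurability of $\omega\mapsto D_y f(g(X^A_T(\omega)),y)$ is immediate from continuity of $D_y f$ and measurability of $X^A_T$.
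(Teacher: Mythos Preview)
Your proof is correct and follows essentially the same approach as the paper's own proof: use the suboptimality bound $V(y+\xi)\le \mathbb{E}[f(g(X^A_T),y+\xi)]$, apply the pointwise semiconcavity inequality from Assumption~\ref{Assumption:UniformSemiconcavity}, take expectations, and identify $\mathbb{E}[f(g(X^A_T),y)]=V(y)$. The additional remarks you include on integrability and measurability of $D_y f(g(X^A_T),y)$ are not spelled out in the paper but are a welcome clarification.
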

\begin{proof}
Fix $y\in\mathbb{R}^m$. By Assumption~\ref{Assumption:UniformSemiconcavity}, there exists $M>0$ such that
\begin{equation}\label{Eqn:SemiconcaveInequality}
f(x,y+\xi) \leq f(x,y) + \xi\cdot D_yf(x,y) + \frac{1}{2}M\|\xi\|^2
\end{equation}
for all $x\in\mathbb{R}$ and $\xi\in\mathbb{R}^m$. Let $A\in\mathcal{A}$ be a control such that
\begin{equation}\nonumber
V(y) = \mathbb{E}\left[f(g(X^A_T),y)\right].
\end{equation}
Note that such a control depends on the choice of $y$.
If we apply inequality \eqref{Eqn:SemiconcaveInequality} pointwise and take expectations, we see
\begin{equation}\nonumber
V(y+\xi) \leq \mathbb{E}\left[f(g(X^A_T),y+\xi)\right] \leq \mathbb{E}\left[f(g(X^A_T),y)\right] + \xi\cdot\mathbb{E}\left[D_y f(g(X^A_T),y)\right] + \frac{1}{2}M\|\xi\|^2
\end{equation}
for all $\xi\in\mathbb{R}^m$. Then, the result holds.
\end{proof}

In the following result, we combine the convexity and semiconcavity estimates of $V$ to show that $V$ is in fact differentiable. In particular, we provide a probabilistic representation of the gradient at each point.

\begin{theorem}[Differentiability]\label{Thm:GradientCharacterization}
Suppose that Assumptions~\ref{Assumption:UniformSemiconcavity} and~\ref{Assumption:Convexity} hold.
For any fixed $y\in\mathbb{R}^m$, we assume that there exists $A\in\mathcal{A}$ such that
\begin{equation}\nonumber
V(y) = \mathbb{E}\left[f(g(X^A_T),y)\right].
\end{equation}
Then, $V$ is differentiable at $y$ and its gradient can be computed as
\begin{equation}\label{Eqn:GradientCharacterization}
DV(y) = \mathbb{E}\left[D_y f(g(X^A_T),y)\right].
\end{equation}
\end{theorem}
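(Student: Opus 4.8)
The plan is to combine the two one-sided estimates already in hand: the convexity of $V$ (Theorem~\ref{Thm:Convexity}, using Assumption~\ref{Assumption:Convexity}) and the semiconcavity estimate of Proposition~\ref{Prop:Semiconcavity} (using Assumption~\ref{Assumption:UniformSemiconcavity}). Write $p := \mathbb{E}\left[D_y f(g(X^A_T),y)\right]$, where $A$ is the optimal control at the fixed point $y$. Proposition~\ref{Prop:Semiconcavity} gives the upper bound
\[
V(y+\xi) \leq V(y) + \xi\cdot p + \tfrac{1}{2}M\|\xi\|^2 \qquad \text{for all } \xi\in\mathbb{R}^m,
\]
so in particular $V(y+\xi) - V(y) - \xi\cdot p \leq \tfrac12 M\|\xi\|^2 = o(\|\xi\|)$ as $\xi\to 0$ only in the one-sided, quadratic sense; I will need the matching lower bound to conclude differentiability.

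First I would use convexity to produce the complementary lower bound. Since $V$ is convex and finite on $\mathbb{R}^m$, it is locally Lipschitz and possesses a nonempty subdifferential $\partial V(y)$ at every point. The key observation is that the semiconcave upper bound forces $\partial V(y)$ to be the singleton $\{p\}$: if $q\in\partial V(y)$, then $V(y+\xi) \geq V(y) + \xi\cdot q$ for all $\xi$, and combining with the upper bound yields $\xi\cdot(q-p) \leq \tfrac12 M\|\xi\|^2$ for all $\xi\in\mathbb{R}^m$; replacing $\xi$ by $t\xi$ with $t\downarrow 0$ gives $\xi\cdot(q-p)\leq 0$ for every $\xi$, hence $q=p$. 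A convex function whose subdifferential at a point is a singleton is differentiable there with gradient equal to that singleton (this is a standard fact in convex analysis; alternatively, one can invoke that a convex function is differentiable precisely where the subdifferential is a singleton). Therefore $V$ is differentiable at $y$ with $DV(y) = p = \mathbb{E}\left[D_y f(g(X^A_T),y)\right]$, which is exactly~\eqref{Eqn:GradientCharacterization}.

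Alternatively — and perhaps more self-contained — one can avoid citing the subdifferential characterization and argue directly with difference quotients. Convexity of $V$ implies that for each direction $\xi$ the map $t\mapsto \frac{V(y+t\xi)-V(y)}{t}$ is nondecreasing in $t>0$, so the one-sided directional derivative $V'(y;\xi) := \lim_{t\downarrow 0}\frac{V(y+t\xi)-V(y)}{t}$ exists, is finite, and is a sublinear (hence convex) function of $\xi$. The semiconcave upper bound gives $V'(y;\xi)\leq \xi\cdot p$; applying this to both $\xi$ and $-\xi$ and using sublinearity ($V'(y;\xi)+V'(y;-\xi)\geq V'(y;0)=0$) yields $V'(y;\xi) = \xi\cdot p$ for all $\xi$, i.e. the directional derivative is linear in $\xi$. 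Linearity of the directional derivative together with local Lipschitz continuity of $V$ (from convexity) upgrades to Fréchet differentiability at $y$, with gradient $p$.

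The main obstacle is not deep but should be stated carefully: it is the passage from "two matching one-sided/quadratic estimates" to genuine (Fréchet) differentiability. The semiconcavity estimate alone is only an upper bound with an $O(\|\xi\|^2)$ slack, and convexity alone only gives a lower bound by an affine support function; neither implies differentiability in isolation (e.g. $-|y|$ is semiconcave but not differentiable at $0$; $|y|$ is convex but not differentiable at $0$). It is precisely the conjunction — upper bound with the \emph{specific} linear term $\xi\cdot p$, and the convex lower support — that pins down a unique candidate gradient and squeezes the difference quotient. I would make sure to note that the existence of the optimal control $A$ at $y$ is what makes $p$ well-defined, and to flag (as the paper's footnote already does) that this existence is supplied in the next subsection via the HJB equation under Assumption~\ref{Assumption:UniformEllipticity}.
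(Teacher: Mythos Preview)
Your proposal is correct and follows essentially the same route as the paper: both combine the convex lower bound (nonempty subdifferential) with the semiconcave upper bound from Proposition~\ref{Prop:Semiconcavity} to force $\partial V(y)=\{p\}$, then invoke the standard fact that a singleton subdifferential implies differentiability. The only cosmetic difference is that the paper extracts $q=p$ from $\xi\cdot(q-p)\le \tfrac12 M\|\xi\|^2$ by plugging in the specific vector $\xi=M^{-1}(q-p)$, whereas you scale $\xi\mapsto t\xi$ and let $t\downarrow 0$; both are equivalent one-line arguments.
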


{{}
We emphasize that, while the value of the gradient seems the natural consequence of an envelope theorem, we do not know that $V$ is differentiable a priori. This theorem proves differentiability as a direct consequence of convexity and semiconcavity, while identifying the gradient in the process.}

\begin{proof}
{{}
The function $V$ is convex by Theorem~\ref{Thm:Convexity}. Because of the quadratic growth of $f$, the Lipschitz assumptions on $g$, $\mu$, and $\sigma$, and the compactness of $\mathbb{A}$, it is standard result that $V(y)$ is finite for every $y\in\mathbb{R}^m$ (see \cite{Evans2013}). Therefore, its subdifferential is non-empty at each point \cite{Rockafellar1970}.} Fix $y\in\mathbb{R}^m$ and suppose $A\in\mathcal{A}$ is a control such that
\begin{equation} \nonumber
V(y) = \mathbb{E}\left[f(g(X^A_T),y)\right].
\end{equation}
Let $z\in\partial V(y)$ be an arbitrary subgradient of $V$ at $y$, i.e.,
\begin{equation}\nonumber
V(y+\xi) \geq V(y) + \xi\cdot z\hspace{0.5cm}\forall\xi\in\mathbb{R}^m.
\end{equation}
By Proposition~\ref{Prop:Semiconcavity}, we also have the inequality
\begin{equation}\nonumber
V(y+\xi) \leq V(y) + \xi\cdot\mathbb{E}\left[D_y f(g(X^A_T),y)\right]+\frac{1}{2}M\|\xi\|^2\hspace{0.5cm}\forall\xi\in\mathbb{R}^m.
\end{equation}
Putting these together, we obtain
\begin{equation}\nonumber
\xi\cdot(z-\mathbb{E}\left[D_y f(g(X^A_T),y)\right]) \leq \frac{1}{2}M\|\xi\|^2\hspace{0.5cm}\forall\xi\in\mathbb{R}^m.
\end{equation}
Choosing $\xi := M^{-1}(z-\mathbb{E}\left[D_y f(g(X^A_T),y)\right])$, we have
\begin{equation}\nonumber
M \|\xi\|^2 = \xi\cdot(z-\mathbb{E}\left[D_y f(g(X^A_T),y)\right]) \leq \frac{1}{2}M\|\xi\|^2,
\end{equation}
which is a contradiction unless $z=\mathbb{E}\left[D_y f(g(X^A_T),y)\right]$.

Because $\partial V(y)$ is single-valued, we conclude that $V$ is differentiable at $y$ and also that
\begin{equation}\nonumber
DV(y) = \mathbb{E}\left[D_y f(g(X^A_T),y)\right]
\end{equation}
as desired.
\end{proof}
Due to Theorem~\ref{Thm:GradientCharacterization}, 
we can solve the outer optimization problem using a gradient descent algorithm given that the function value $V(y)$ and its gradient $DV(y)$ are provided. 
We notice that $V(y)$  can be computed by dynamic programming. 
It is worth mentioning that the overall problem is still time-inconsistent but our bilevel decomposition allows us to solve the inner optimization problem using dynamic programming.
We investigate the inner optimization problem
and provide a constructive approach to solve the overall problem
 in the following subsection.

\subsection{PDE Characterization of $V$ and $DV$}\label{Section:PDE}

Recall that $V(y)$ is the optimal value of the inner optimization problem \eqref{Eqn:StandardControl} given the initial value $\bm{x}_0$ of the state.
Note that this problem can be solved by dynamic programming,
we first compute $V(y)$ in terms of the viscosity solution of an associated HJB equation. In the process we construct an optimal control, which guarantees $V$ is differentiable by Theorem~\ref{Thm:GradientCharacterization}. Furthermore, we can compute $DV(y)$ in terms of the viscosity solution of a linear parabolic equation.

\begin{proposition}\label{Prop:ViscositySolution}
Given $y\in\mathbb{R}^m$, let $v: [0,T] \times \mathbb{R}^n \to \mathbb{R}$ be the viscosity solution of the HJB equation
\begin{equation}\label{Eqn:ValueHJB}
\begin{array}{rl}
v_t + \inf\limits_{a\in\mathbb{A}}\left[\frac{1}{2} \mbox{tr}( \sigma(x,a)\sigma(x,a)^\top D^2_x v ) + \mu(x,a)\cdot D_x v\right] = 0 & \text{in }[0,T)\times\mathbb{R}^n\\
v(T,x) = f(g(x),y) & \text{on }\left\{t=T\right\}\times\mathbb{R}^n.
\end{array}\end{equation}
Then, we have
\begin{equation}\nonumber
V(y) = v(0,\bm{x}_0),
\end{equation}
where $\bm{x}_0$ is the initial value of the SDE \eqref{sde}.
\end{proposition}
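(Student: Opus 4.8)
The plan is to recognize Proposition~\ref{Prop:ViscositySolution} as the standard identification of the value function of a (non-degenerate) Markovian stochastic control problem with the viscosity solution of its dynamic programming equation, and then to specialize that identification from a generic initial datum $(t,x)$ to the datum $(0,\bm{x}_0)$. To set this up I would first introduce the \emph{dynamic} value function
\[
u(t,x) := \inf_{A}\mathbb{E}\left[f\bigl(g(X^{t,x,A}_T),y\bigr)\right],
\]
where $X^{t,x,A}$ solves \eqref{sde} started from $x$ at time $t$ and the infimum is over admissible controls on $[t,T]$, so that $V(y)=u(0,\bm{x}_0)$ by \eqref{Eqn:StandardControl}. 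The terminal cost $x\mapsto f(g(x),y)$ is $f(\cdot,y)$ — convex with at most quadratic growth by hypothesis — composed with the Lipschitz function $g$, hence continuous with at most quadratic growth in $x$; together with the standing Lipschitz and linear-growth hypotheses on $\mu,\sigma$ this yields the moment bounds $\sup_{s\in[t,T]}\mathbb{E}\,\|X^{t,x,A}_s\|^p<\infty$ for all $p<\infty$, uniformly in $A$, so $u$ is finite, continuous, and of at most polynomial growth on $[0,T]\times\mathbb{R}^n$.

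The core of the argument is then three classical steps. (1) \emph{Dynamic programming principle}: for $0\le t\le s\le T$, $u(t,x)=\inf_{A}\mathbb{E}\bigl[u(s,X^{t,x,A}_s)\bigr]$, which follows from the Markov property of $X^A$ together with a measurable-selection construction of $\varepsilon$-optimal controls on $[s,T]$. (2) \emph{Viscosity property}: combining the DPP with Itô's formula applied to smooth test functions shows, in the now-routine way, that $u$ is a viscosity solution of the HJB equation in \eqref{Eqn:ValueHJB}, while $u(T,\cdot)=f(g(\cdot),y)$ holds by definition. (3) \emph{Uniqueness}: under Assumption~\ref{Assumption:UniformEllipticity} the equation is uniformly parabolic with Lipschitz coefficients and continuous terminal data of polynomial growth, so it admits a comparison principle in the class of continuous functions of polynomial growth — in fact, by \cite{Fleming2006,Lions1983}, the solution $v$ belongs to $C^{1,2}$ (resp.\ $W^{2,p}_{\mathrm{loc}}$) with controlled growth of $v$ and $D_xv$. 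Hence $u\equiv v$, and in particular $V(y)=u(0,\bm{x}_0)=v(0,\bm{x}_0)$.

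An alternative route — the one I would actually favor, since Assumption~\ref{Assumption:UniformEllipticity} is already in force and it delivers, as a by-product, the optimal control invoked in the surrounding text — is a direct verification argument using the smoothness of $v$. Applying Itô's formula to $s\mapsto v(s,X^{A}_s)$ and using that $v$ solves \eqref{Eqn:ValueHJB} (so its drift is pointwise $\ge 0$ along any control) gives $v(0,\bm{x}_0)\le\mathbb{E}\bigl[f(g(X^A_T),y)\bigr]$ for every $A\in\mathcal{A}$, whence $v(0,\bm{x}_0)\le V(y)$; the stochastic-integral term is a genuine martingale because $D_xv$ has polynomial growth and $X^A$ has finite moments of every order. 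For the reverse inequality, a measurable-selection theorem (the map $a\mapsto \tfrac12\mathrm{tr}(\sigma(x,a)\sigma(x,a)^\top D_x^2v(s,x))+\mu(x,a)\cdot D_xv(s,x)$ is continuous on the compact set $\mathbb{A}$) furnishes a Borel map $\hat a(s,x)$ attaining the infimum in \eqref{Eqn:ValueHJB}; the closed-loop SDE with coefficients $\mu(x,\hat a(s,x)),\sigma(x,\hat a(s,x))$ has a (weak) solution $\hat X$ defining an admissible control, and re-running the Itô computation with the drift now identically zero gives $v(0,\bm{x}_0)=\mathbb{E}\bigl[f(g(\hat X_T),y)\bigr]\ge V(y)$.

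The main obstacle is the usual one in this circle of ideas: making the optimal feedback control rigorous. The selector $\hat a$ is only Borel measurable in $x$, so the closed-loop SDE need not have a strong solution in the classical Lipschitz sense; this is resolved either by working with weak solutions (legitimate because $\mathcal{A}$ permits progressively measurable controls on a suitably enlarged filtered space) or by invoking the $W^{2,p}_{\mathrm{loc}}$ regularity of $v$ together with Krylov-type well-posedness for non-degenerate SDEs with merely measurable coefficients — and this is precisely where Assumption~\ref{Assumption:UniformEllipticity} is needed. The remaining ingredients — the DPP, the viscosity verification, the comparison principle, and the upgrade from local martingale to martingale via the moment bounds — are standard, and I would cite \cite{Fleming2006,Lions1983} rather than reproduce them.
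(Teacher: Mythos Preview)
Your proposal is correct and, if anything, more detailed than the paper's own treatment: the paper does not prove Proposition~\ref{Prop:ViscositySolution} at all but simply remarks that ``this is a standard result,'' reserving the regularity discussion (and the citations to \cite{Fleming2006,Lions1983,Evans1983,Caffarelli1995,Gilbarg2001}) for the subsequent construction of the optimal control in Theorem~\ref{Thm:GradientPDE}. Your outline---dynamic value function, DPP, viscosity property, comparison/uniqueness, with the optional verification route---is exactly the standard argument being alluded to; the only minor over-assumption is that the comparison principle for \eqref{Eqn:ValueHJB} already holds under the standing Lipschitz hypotheses on $\mu,\sigma$ and does not require Assumption~\ref{Assumption:UniformEllipticity}, which the paper invokes only later for $C^{1,2}$ regularity and the existence of an optimal feedback.
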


{{}This is a standard result. The important point is that, under Assumption~\ref{Assumption:UniformEllipticity}, the equation \eqref{Eqn:ValueHJB} is uniformly parabolic and concave in $D_x^2v$. Then we know from regularity results for HJB equations that the value function $v$ is twice differentiable in space (see \cite{Evans1983,Caffarelli1995,Gilbarg2001}). This allows us to compute $DV(y)$ by solving a linear equation.}

\begin{theorem}[Gradient PDE]\label{Thm:GradientPDE}
Suppose that Assumptions~\ref{Assumption:UniformEllipticity}--\ref{Assumption:Convexity} hold.
Given $y\in\mathbb{R}^m$,
let $v$ be the viscosity solution of \eqref{Eqn:ValueHJB}. 
\begin{enumerate}
\item An optimal control, $A^\star_t := a^\star(t,X^{A^\star}_t)$, exists, where
$a^\star:[0,T)\times\mathbb{R}^n\to\mathbb{A}$ satisfies
\begin{equation}\nonumber
a^\star(t,x)\in \arg\min\limits_{a\in\mathbb{A}}\left[\frac{1}{2} \mbox{tr} ( \sigma(x,a)\sigma(x,a)^\top D^2_x v) + \mu(x,a)\cdot D_x v\right]\text{ for all }(t,x)\in[0,T)\times\mathbb{R}^n.
\end{equation}
\item
Let $w:[0,T]\times\mathbb{R}^n\to\mathbb{R}^m$ be defined as
\begin{equation}\label{Eqn:GradientViscositySolution}
w(t,x) := \mathbb{E}_t\left[D_y f(g(X^{A^\star}_T),y)\mid X^{A^\star}_t=x\right].
\end{equation}
Then, $w := (w^{(1)}, \cdots, w^{(m)})$ is a viscosity solution of the decoupled system of linear equations
\begin{equation}\label{Eqn:GradientPDE}
\begin{array}{rl}
w_t^{(k)} + \frac{1}{2}\mbox{tr}( \sigma(x,a^\star(t,x))\sigma(x,a^\star(t,x))^\top D^2_x w^{(k)})  + \mu(x,a^\star(t,x))\cdot D_x w^{(k)} = 0 & \text{in }[0,T)\times\mathbb{R}^n\\
w^{(k)}(T,x) = [D_y f(g(x),y)]_k & \text{on }\left\{t=T\right\}\times\mathbb{R}^n
\end{array}\end{equation}
for $k=1, \cdots, m$.
Furthermore, we have
\begin{equation}\nonumber
DV(y) = w(0,\bm{x}_0),
\end{equation}
where $\bm{x}_0$ is the initial value of the SDE \eqref{sde}.
\end{enumerate}
\end{theorem}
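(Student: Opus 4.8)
The plan is to establish the two parts of Theorem~\ref{Thm:GradientPDE} in sequence, with the bulk of the work going into identifying $w$ as the viscosity solution of the linear system~\eqref{Eqn:GradientPDE}. For part (1), I would invoke Assumption~\ref{Assumption:UniformEllipticity}: under uniform parabolicity the HJB equation~\eqref{Eqn:ValueHJB} is uniformly parabolic and concave in $D^2_x v$, so by the interior regularity theory for concave fully nonlinear parabolic equations (Evans--Krylov, as cited via \cite{Evans1983,Caffarelli1995,Gilbarg2001}, and the classical treatment in \cite{Fleming2006,Lions1983}) the value function $v$ is $C^{1,2}$ in $(0,T)\times\mathbb{R}^n$. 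Compactness of $\mathbb{A}$ together with continuity of $\mu$, $\sigma$ in $a$ then yields a Borel-measurable minimizing selector $a^\star(t,x)$ of the Hamiltonian, and a standard verification argument (plug $a^\star(t,X_t)$ into the SDE, apply It\^o to $v(t,X^{A^\star}_t)$, use the fact that $v$ solves the HJB equation with equality along $a^\star$) shows that $A^\star_t := a^\star(t,X^{A^\star}_t)$ is optimal for the inner problem~\eqref{Eqn:StandardControl}. This in particular verifies the standing hypothesis of Theorem~\ref{Thm:GradientCharacterization}, so $V$ is differentiable at $y$ with $DV(y) = \mathbb{E}[D_y f(g(X^{A^\star}_T),y)]$.

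For part (2), the strategy is to recognize that, once $a^\star$ is fixed, the closed-loop dynamics $dX^{A^\star}_t = \mu(X^{A^\star}_t,a^\star(t,X^{A^\star}_t))\,dt + \sigma(X^{A^\star}_t,a^\star(t,X^{A^\star}_t))\,dW_t$ define a (time-inhomogeneous) Markov diffusion, and each component $w^{(k)}(t,x) = \mathbb{E}_t[\,[D_y f(g(X^{A^\star}_T),y)]_k \mid X^{A^\star}_t = x\,]$ is, by the Feynman--Kac formula, the natural candidate solution of the corresponding linear backward Kolmogorov equation~\eqref{Eqn:GradientPDE} with terminal data $[D_y f(g(x),y)]_k$. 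The terminal data is well-behaved: by convexity of $f$ plus Assumption~\ref{Assumption:UniformSemiconcavity}, $y\mapsto f(x,y)$ is $C^1$, and the inf-convolution/Lipschitz structure makes $x\mapsto D_y f(g(x),y)$ at worst of linear growth, so $w^{(k)}$ is well-defined and of polynomial growth by the $L^p$ bounds on $X^{A^\star}_T$. I would then show $w^{(k)}$ is a viscosity solution of the linear PDE: the standard route is to verify the dynamic programming (tower) property $w^{(k)}(t,x) = \mathbb{E}_t[w^{(k)}(t+h, X^{A^\star}_{t+h})\mid X^{A^\star}_t=x]$, and then use It\^o's formula against an arbitrary smooth test function touching $w^{(k)}$ from above/below to read off the sub/supersolution inequalities in the usual way. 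Finally, evaluating at $t=0$, $x=\bm{x}_0$ gives $w(0,\bm{x}_0) = \mathbb{E}[D_y f(g(X^{A^\star}_T),y)] = DV(y)$.

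The main obstacle is the regularity of the coefficients of the linearized equation~\eqref{Eqn:GradientPDE}: the optimal feedback $a^\star(t,x)$ need not be continuous in $x$, so the drift $\mu(x,a^\star(t,x))$ and diffusion $\sigma(x,a^\star(t,x))a^\star\!(\cdots)^\top$ are merely bounded and measurable, not smooth. Consequently one cannot simply quote classical Schauder theory for~\eqref{Eqn:GradientPDE}, and even the notion of viscosity solution for an equation with discontinuous coefficients needs care. The way I would handle this is to lean on the uniform parabolicity of the closed-loop operator (inherited from Assumption~\ref{Assumption:UniformEllipticity}, since $\sigma\sigma^\top - \epsilon I_n \succeq 0$ regardless of the control) together with the probabilistic definition of $w$: the diffusion $X^{A^\star}$ has a well-defined transition density even with measurable coefficients (Krylov's estimates / Stroock--Varadhan), so $w^{(k)}$ is well-defined and the tower property holds by the Markov property; one then interprets~\eqref{Eqn:GradientPDE} in the viscosity (or $L^n$-strong, via Krylov--Safonov) sense, which is exactly what the statement claims. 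An alternative, if one prefers to stay classical, is to note that in the uniformly parabolic concave regime $D_xv$ is in fact H\"older continuous and, where the argmin is unique and $\mu,\sigma$ are smooth in $a$, $a^\star$ inherits continuity — but since the theorem only asserts a viscosity solution, the probabilistic route is cleaner and is presumably what the authors intend.
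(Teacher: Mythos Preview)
Your proposal is correct and follows essentially the same approach as the paper: part~(1) via uniform parabolicity and the resulting $C^{1,2}$ regularity of $v$ plus a measurable-selector/verification argument, and part~(2) via the tower property for $w$ combined with It\^o's formula on smooth test functions in the discontinuous viscosity solution framework, with the final identification $DV(y)=w(0,\bm{x}_0)$ coming from Theorem~\ref{Thm:GradientCharacterization}. The paper's proof carries out the viscosity verification in slightly more detail---working explicitly with the semicontinuous envelopes $w_*$, $w^*$ and approximating sequences $(t_k,x_k)\to(\bar t,\bar x)$ with a carefully chosen vanishing time increment $h_k$---but the strategy is exactly the one you outline.
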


Before providing a rigorous proof, we note the intuition behind the result. It is clear that $w$ defined in (\ref{Eqn:GradientViscositySolution}) satisfies
\begin{equation}\label{Eqn:ConditionalTowerRule}
w(t,x) = \mathbb{E}_t\left[w(t+h,X^{A^*}_{t+h}) \mid X^{A^*}_t = x\right]
\end{equation}
for all $(t,x)\in[0,T)\times\mathbb{R}^n$ and $h\in(0,T-t)$. This follows from the law of iterated expectations. If $w$ were smooth, we could apply Ito's lemma for small $h>0$ and conclude that $w$ satisfies \eqref{Eqn:GradientPDE}. However,  the coefficients of \eqref{Eqn:GradientPDE} are not necessarily continuous and hence the solution $w$ is potentially discontinuous. That is, we must consider this within the framework of discontinuous viscosity solutions. For more on this topic, see~\cite{Katzourakis2015,Touzi2013}.

\begin{proof}
\begin{enumerate}
\item
The existence of such an optimal control follows from a standard argument due to the uniform parabolicity of the HJB \eqref{Eqn:ValueHJB} (e.g., \cite{Fleming2006}).

\item
Without loss of generality, we can assume $m=1$ and suppress the superscript of $w$ because the PDE system \eqref{Eqn:GradientPDE} is decoupled.
Our goal is then to show that the function $w$ defined in \eqref{Eqn:GradientViscositySolution} is a discontinuous viscosity solution of \eqref{Eqn:GradientPDE}. The key property of $w$ is listed in \eqref{Eqn:ConditionalTowerRule}. For convenience, we introduce the notation
\begin{equation} \nonumber
(\mathcal{L}^{a^\star}\phi)(t,x) := \phi_t(t,x) + \mbox{tr} (\sigma(x,a^\star(t,x))\sigma(x,a^\star(t,x))^\top D^2_x\phi(t,x)) + \mu(x,a^\star(t,x))\cdot D_x\phi(t,x),
\end{equation}
where $\phi$ is an arbitrary smooth function.

As a reminder, we define the lower- and upper-semicontinuous envelopes of a locally-bounded function $\psi$ as
\begin{equation}\nonumber
\psi_*(x) := \liminf\limits_{y\to x}\psi(y)\text { and }\psi^*(x) := \limsup\limits_{y\to x}\psi(y),
\end{equation}
respectively.

Fix $(\bar{t},\bar{x})\in[0,T)\times\mathbb{R}$ and let $\phi:[0,T)\times\mathbb{R} \to\mathbb{R}$ be a smooth function satisfying
\begin{equation}\nonumber
0 = (w_{*}-\phi)(\bar{t},\bar{x}) = \min\limits_{[0,T)\times\mathbb{R}^n}(w_{*}-\phi).
\end{equation}
That is, $\phi$ touches the lower-semicontinuous envelope of $w$ from below at $(\bar{t},\bar{x})$. Our goal is to show that
\begin{equation}\nonumber
\left(\mathcal{L}^{a^\star}\phi\right)_*(\bar{t},\bar{x}) \leq 0.
\end{equation}
Towards that end, let $\{(t_k,x_k)\}_{k=0}^\infty$ be a sequence such that, as $k \to \infty$,
\begin{equation}\nonumber
(t_k,x_k) \to (\bar{t},\bar{x})\text{ and }w(t_k,x_k)\to w_{*}(\bar{t},\bar{x}).
\end{equation}
Since $\phi$ is smooth,  $\delta_k := w(t_k,x_k) - \phi(\bar{t},\bar{x}) \to 0$. We also define
\begin{equation}\nonumber
h_k := \sqrt{\delta_k}\,1_{\left\{\delta_k \neq 0\right\}}+k^{-1}\,1_{\left\{\delta_k = 0\right\}}
\end{equation}
and take $k$ large enough that $h_k\in(0,T-t_k)$. Then, we have
\begin{eqnarray}\nonumber
w(t_k,x_k) & = & \mathbb{E}_{t_k}\left[w(t_k+h_k,X^{A^\star}_{t_k+h_k}) \mid X^{A^\star}_{t_k} = x_k\right] \\\nonumber
& \geq & \mathbb{E}_{t_k}\left[\phi(t_k+h_k,X^{A^\star}_{t_k+h_k}) \mid X^{A^\star}_{t_k} = x_k\right] \\\nonumber
& = & \phi(t_k,x_k) + \mathbb{E}_{t_k}\left[\int_{t_k}^{t_k+h_k} \left(\mathcal{L}^{a^\star}\phi\right)(s,X^{A^\star}_s)\,ds \mid X^{A^\star}_{t_k}=x_k\right].\nonumber
\end{eqnarray}
Rearranging this, we conclude
\begin{equation}\nonumber
\frac{\delta_k}{h_k} \geq \mathbb{E}_{t_k}\left[\frac{1}{h_k}\int_{t_k}^{t_k+h_k} \left(\mathcal{L}^{a^\star}\phi\right)(s,X^{A^\star}_s)\,ds \mid X^{A^\star}_{t_k}=x_k\right].
\end{equation}
Sending $k\to\infty$, the left-hand side converges to zero, while the right-hand side dominates the lower-semicontinuous envelope of $\mathcal{L}^{a^\star}\phi$ almost surely. Therefore, we conclude by the dominated convergence theorem that
\begin{equation}\nonumber
\left(\mathcal{L}^{a^\star}\phi\right)_*(\bar{t},\bar{x}) \leq 0.
\end{equation}

The opposite inequality proceeds exactly the same. We also notice that $w$ satisfies the boundary condition of \eqref{Eqn:GradientPDE} pointwise. Therefore, $w$ is a viscosity solution of the linear PDE~\eqref{Eqn:GradientPDE}.

Finally, we put this together with Theorem~\ref{Thm:GradientCharacterization} to conclude
\begin{equation}\nonumber
w(0,\bm{x}_0) = \mathbb{E}\left[D_y f(g(X^{A^\star}_T),y)\right] = DV(y).
\end{equation}
\end{enumerate}
\end{proof}

We note that some care must be taken when arguing that the viscosity solution to the PDE in (\ref{Eqn:GradientPDE}) is unique. It is well-known that non-divergence form linear PDEs may not have a unique viscosity solution in dimensions greater than two if the diffusion coefficients are not continuous (see \cite{Safonov1999,Krylov2004,Nadirashvili1997}). However, there has been subsequent work on finding structural conditions on the coefficients which guarantee weak uniqueness in higher dimensions. For fully non-linear elliptic equations, \cite{Kawohl1998} provides assumptions on the nonlinearity, not including continuous coefficients, which guarantee unique upper semi-continuous viscosity solutions in the sense of Theorem~\ref{Thm:GradientPDE}. Recent results focus on bounds on the mean-oscillation of the coefficients \cite{Krylov2007,Kim2007,Dong2015}, as well as on the set of discontinuities being small \cite{Soravia2006,Kim2007_2}.

We emphasize that in many practical problems
the linear parabolic equation \eqref{Eqn:GradientPDE} has a unique viscosity solution.
In the next proposition, we provide two conditions which can be easily checked. 
\begin{proposition}
Define $F:\mathbb{R}^n\times\mathbb{R}^n\times\mathbb{S}^n\to\mathbb{R}$ as
\begin{equation}\nonumber
F(x,p,R) := \inf\limits_{a\in\mathbb{A}}\left[\frac{1}{2}\mbox{tr}(\sigma(x,a)\sigma(x,a)^\top R) + \mu(x,a)\cdot p\right].
\end{equation}
If either (i) $n\leq 2$ or (ii) both $D_p F$ and $D_R F$ exist and are continuous, then there is a unique viscosity solution of (\ref{Eqn:GradientPDE}).
\end{proposition}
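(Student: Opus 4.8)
The plan is to establish existence and uniqueness separately. Existence of a viscosity solution is already in hand: Theorem~\ref{Thm:GradientPDE} exhibits the conditional-expectation function $w$ as a (discontinuous) viscosity solution of \eqref{Eqn:GradientPDE}, so the only work is uniqueness. The linear equation \eqref{Eqn:GradientPDE} can be written as $w_t + \tfrac12\mbox{tr}(\sigma\sigma^\top D_x^2 w) + \mu\cdot D_x w = 0$ with coefficients evaluated at $(x,a^\star(t,x))$, and the key difficulty is that $a^\star(t,x)$, hence the coefficients, need not be continuous. So both claims amount to ruling out the pathological non-uniqueness phenomena of Nadirashvili-type examples by exploiting structure inherited from the HJB equation \eqref{Eqn:ValueHJB}.

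For case (i), $n\le 2$: here I would invoke the classical fact (going back to the two-dimensional regularity theory for non-divergence linear equations, e.g. the Krylov–Safonov framework specialized to dimension two, together with the references \cite{Safonov1999,Krylov2004,Nadirashvili1997} cited above) that in dimensions $\le 2$ linear non-divergence parabolic equations with merely bounded measurable coefficients and uniformly parabolic (Assumption~\ref{Assumption:UniformEllipticity} applies since $\sigma(x,a^\star)\sigma(x,a^\star)^\top \succeq \epsilon I_n$) satisfy a maximum principle strong enough to force uniqueness of bounded viscosity solutions. Concretely, one shows any two viscosity solutions with the same terminal data coincide by applying a comparison principle; the obstruction present in higher dimensions (the Nadirashvili example requires $n\ge 3$) is absent, so the standard doubling-of-variables argument, or the probabilistic representation via the associated martingale problem (whose solution is unique in $n\le 2$), closes the gap.

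For case (ii), $D_pF$ and $D_RF$ exist and are continuous: the idea is that although $a^\star$ itself may be discontinuous, the \emph{coefficients seen by the linearized equation} are governed by $F$, not by $a^\star$. At a minimizer $a^\star(t,x)$ of $a\mapsto \tfrac12\mbox{tr}(\sigma(x,a)\sigma(x,a)^\top R) + \mu(x,a)\cdot p$ with $p=D_xv(t,x)$, $R=D_x^2v(t,x)$, the envelope theorem gives $\tfrac12\sigma(x,a^\star)\sigma(x,a^\star)^\top = D_RF(x,p,R)$ and $\mu(x,a^\star) = D_pF(x,p,R)$; since $v$ is $C^{1,2}$ in space (the remark after Proposition~\ref{Prop:ViscositySolution}) and $D_pF, D_RF$ are continuous, the coefficient fields $(t,x)\mapsto D_RF(x,D_xv,D_x^2v)$ and $(t,x)\mapsto D_pF(x,D_xv,D_x^2v)$ are continuous. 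Hence \eqref{Eqn:GradientPDE} is, after this substitution, a linear parabolic equation with \emph{continuous} coefficients, for which the standard comparison principle for viscosity solutions (e.g. \cite{Touzi2013,Katzourakis2015}) yields uniqueness; alternatively one reads off well-posedness of the martingale problem for the continuous, uniformly-elliptic generator and invokes the Feynman–Kac representation, which must then agree with $w$. The one delicate point to check carefully is that $D_RF = \tfrac12\sigma\sigma^\top$ evaluated at the minimizer is consistent across the possibly non-unique set of minimizers $a^\star$ — but this is automatic, since by the envelope theorem the value of $D_RF(x,p,R)$ depends only on $(x,p,R)$ and not on which minimizer is selected, which is precisely why differentiability of $F$ is the right hypothesis.

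The main obstacle is case (ii): one must argue that the formal linearization of the HJB equation genuinely has continuous coefficients despite the discontinuity of the optimal feedback map, i.e. that the regularity of $v$ together with differentiability of $F$ launders away the irregularity of $a^\star$. Once that reduction is made, uniqueness is classical in both cases.
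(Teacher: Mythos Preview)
Your proposal is correct and follows essentially the same approach as the paper. For case (i) the paper simply invokes Theorem~2.17 of \cite{Krylov2004}, which is the concrete reference behind the two-dimensional uniqueness you describe; for case (ii) the paper rewrites \eqref{Eqn:GradientPDE} as $w_t^{(k)} + \mbox{tr}(D_RF(x,D_xv,D_x^2v)^\top D_x^2 w^{(k)}) + D_pF(x,D_xv,D_x^2v)\cdot D_xw^{(k)} = 0$ and observes the coefficients are continuous---exactly your envelope-theorem reduction, though your remark that differentiability of $F$ forces the coefficients to be independent of the choice of minimizer $a^\star$ makes explicit a point the paper leaves implicit.
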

\begin{proof}

If $n\leq 2$, then this follows from Theorem~2.17 of \cite{Krylov2004}. If $D_p F$ and $D_R F$ exist and are continuous, then we note that the gradient PDE (\ref{Eqn:GradientPDE}) can be re-written in the form
\begin{equation}\nonumber
w_t^{(k)} + \mbox{tr} (D_R F(x,D_xu,D^2_xu)^\top D_x^2 w^{(k)} ) + D_p F(x,D_xu,D^2_xu)\cdot D_x w^{(k)} = 0.
\end{equation}
Recall $u$ has continuous second derivatives in space, so the coefficients are all continuous functions. Then, the uniqueness of viscosity solutions follows as usual.

\end{proof}

Using Proposition~\ref{Prop:ViscositySolution} and Theorem~\ref{Thm:GradientPDE}, we can calculate $V(y)$ and $DV(y)$ at each $y \in \mathbb{R}^m$ by (numerically) solving the PDEs \eqref{Eqn:ValueHJB} and \eqref{Eqn:GradientPDE}.
Therefore, we can solve the outer optimization problem using a gradient descent-type algorithm and find a globally optimal solution due to the convexity of $V$.
We will not discuss numerical optimization algorithms as they are not the focus of this paper (we refer to, for example, \cite{Nocedal2006} for detailed algorithms).

{{}One natural question regarding the computation of the gradient is whether it is possible to utilize other numerical methods, most obviously Monte Carlo methods. Theorem~\ref{Thm:GradientCharacterization} can of course be used directly for this. The practitioner would obtain a numerical solution of \eqref{Eqn:ValueHJB}, then generate random optimal trajectories via Monte Carlo simulations, and estimate $DV(y)$ via the sample expected value corresponding to \eqref{Eqn:GradientCharacterization}. This however, introduces extra numerical (sampling) error in the calculation of the gradient, which is often a source of instability.
}

Furthermore, this gradient descent approach gains a dimensionality reduction by $m$, which is the dimension of $y$, as opposed to a dynamic programming method over the lifted state space of $(x, y)$.  Even when $m = 1$, this gain is considerable because the computational complexity of dynamic programming increases exponentially with the dimension of state space.\footnote{{{}
One can further alleviate the computational burden by employing advanced numerical methods such as sparse grids and multigrid techniques (e.g., \cite{Hoppe1986, Bokanowski2013}). It is difficult to make precise statements about the complexity of the entire algorithm without discussing certain implementation choices. However, it is essentially that of a gradient-based unconstrained concave minimization algorithm, where each step requires the solution of an $(n+1)$-dimensional HJB and $m$ linear PDEs of dimension $(n+1)$.}}

\section{Relaxation of Assumptions}\label{Section:Approximation}

The goal of this section is to explicitly relax Assumptions~\ref{Assumption:UniformSemiconcavity} and~\ref{Assumption:UniformEllipticity}, and to a lesser extent Assumption~\ref{Assumption:Convexity}. In the case of Assumptions~\ref{Assumption:UniformSemiconcavity} and~\ref{Assumption:UniformEllipticity}, we provide convergent approximation schemes including suboptimality bounds.

\subsection{Uniformly Semiconcave Approximation}\label{semi_app}

In this section, we consider an approximation scheme for the case that $f$ does not satisfy the semiconcavity assumption (Assumption~\ref{Assumption:UniformSemiconcavity}). The idea is to modify $f$ via inf-convolution to obtain a function $f_\epsilon$ for some small $\epsilon>0$. We show that this new function $f_\epsilon$ satisfies Assumption~\ref{Assumption:UniformSemiconcavity}. We then prove that the resulting perturbed value function converges to the unperturbed problem as $\epsilon\to 0$.
Relaxing this semiconcavity assumption is particularly important for problems with CVaR objectives.

First, we recall the definition of inf-convolution and the key properties of the resulting function.

\begin{proposition}[Semiconcave approximation]\label{Prop:InfConvolutionProperties}
Recall that $f : \mathbb{R} \times \mathbb{R}^m \to \mathbb{R}$ is convex.
For $\epsilon >0$, define the inf-convolution $f_\epsilon$ as
\begin{equation} \label{inf_conv}
f_\epsilon(x,y) := \inf\limits_{z\in\mathbb{R}^m}\left[f(x,z) + \frac{\|y-z\|^2}{2\epsilon}\right].
\end{equation}
Then, $f_\epsilon: \mathbb{R} \times \mathbb{R}^m \to \mathbb{R}$ has the following properties:
\begin{enumerate}
\item $f_\epsilon$ is convex,
\item $y\mapsto f_\epsilon(x,y)$ is $\epsilon$-semiconcave for all $x\in\mathbb{R}^n$, and
\item $f_\epsilon\to f$ uniformly as $\epsilon\to 0$.
\end{enumerate}
\end{proposition}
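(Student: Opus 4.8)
The plan is to recognize that, for each fixed $x$, the map $y\mapsto f_\epsilon(x,y)$ is precisely the Moreau--Yosida regularization (Moreau envelope) of the convex function $y\mapsto f(x,y)$ with parameter $\epsilon$, so that all three properties follow from routine convex-analytic manipulations carrying $x$ along as a passive parameter. A preliminary point I would dispatch first is that the infimum in \eqref{inf_conv} is finite (and attained): since $f$ is convex it is minorized by an affine function, so for every $(x,y)$ the map $z\mapsto f(x,z)+\|y-z\|^2/(2\epsilon)$ is lower semicontinuous and coercive — certainly when $f$ is Lipschitz in $y$, and in general once $\epsilon$ is small enough that the quadratic penalty dominates the quadratic growth of $f$ — hence $f_\epsilon$ is well defined and real-valued. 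For (1), I would then observe that $(x,z,y)\mapsto f(x,z)+\|y-z\|^2/(2\epsilon)$ is jointly convex, since $f$ is convex in $(x,z)$ and $(y,z)\mapsto\|y-z\|^2$ is jointly convex; as partial minimization of a jointly convex function over one block of variables yields a convex function, $f_\epsilon$ is convex on $\mathbb{R}\times\mathbb{R}^m$.

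For (2), I would complete the square in $y$ to write
\[
f_\epsilon(x,y) - \frac{\|y\|^2}{2\epsilon} = \inf_{z\in\mathbb{R}^m}\left[f(x,z)+\frac{\|z\|^2}{2\epsilon}-\frac{1}{\epsilon}\,y\cdot z\right],
\]
which exhibits $y\mapsto f_\epsilon(x,y)-\|y\|^2/(2\epsilon)$ as an infimum of affine functions of $y$, hence concave. Setting $h(y):=f_\epsilon(x,y)-\|y\|^2/(2\epsilon)$, the concavity of $h$ together with the convexity of $f_\epsilon(x,\cdot)=h+\|\cdot\|^2/(2\epsilon)$ established in (1) forces $f_\epsilon(x,\cdot)$ to be differentiable (both its subdifferential and its superdifferential are nonempty at every point), and the supporting-hyperplane inequality $h(y+\xi)\le h(y)+\xi\cdot Dh(y)$ rearranges term by term into
\[
f_\epsilon(x,y+\xi)\le f_\epsilon(x,y)+\xi\cdot D_y f_\epsilon(x,y)+\frac{1}{2\epsilon}\|\xi\|^2 ,
\]
which is exactly Assumption~\ref{Assumption:UniformSemiconcavity} with $M=1/\epsilon$, i.e.\ $\epsilon$-semiconcavity.

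For (3), the estimate $f_\epsilon(x,y)\le f(x,y)$ is immediate on taking $z=y$ in \eqref{inf_conv}. For the reverse bound I would use that $f$ is Lipschitz in $y$ uniformly in $x$ with some constant $L$ — the situation relevant to the applications, e.g.\ $f^{\text{CVaR}}$ with $L=1+(1-\alpha)^{-1}$: then $f(x,z)\ge f(x,y)-L\|y-z\|$, and minimizing $r\mapsto -Lr+r^2/(2\epsilon)$ over $r=\|y-z\|\ge 0$ gives $f_\epsilon(x,y)\ge f(x,y)-L^2\epsilon/2$. Hence $0\le f(x,y)-f_\epsilon(x,y)\le L^2\epsilon/2$ for all $(x,y)$, which is the asserted uniform convergence, with linear rate in $\epsilon$. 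The computations throughout are entirely standard; the one point I would be careful to flag — and the only place where anything could go wrong — is that genuinely \emph{uniform} (as opposed to merely locally uniform) convergence in (3) requires $f$ to be uniformly continuous in $y$, so that the bare quadratic-growth hypothesis would only deliver convergence uniformly on compacta. Since the inf-convolution is applied precisely to Lipschitz objectives such as CVaR and MAD, this restriction costs nothing, but I would state the Lipschitz hypothesis explicitly rather than leave it implicit.
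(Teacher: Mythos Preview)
Your proof is correct. The paper itself does not supply a proof of this proposition; it simply cites \cite{Katzourakis2015,Rockafellar1970} as standard references, so your argument is strictly more detailed than what appears in the text. Your treatment of part (3) via the Lipschitz bound $0\le f-f_\epsilon\le L^2\epsilon/2$ is in fact exactly what the paper later isolates as a separate result (Proposition~\ref{prop:ic_estimate}), under precisely the Lipschitz hypothesis you flag, so your caution about needing uniform Lipschitz continuity for genuinely uniform convergence is well placed and matches the authors' own handling downstream.
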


For a proof of Proposition~\ref{Prop:InfConvolutionProperties}, see \cite{Katzourakis2015,Rockafellar1970}. In particular, this implies that $f_\epsilon$ satisfies Assumption~\ref{Assumption:UniformSemiconcavity}. We also show that approximation by inf-convolution will not break the convexity required by Assumption~\ref{Assumption:Convexity}.

\begin{proposition}\label{Prop:InfConvPreseveresConvexity}
If $f$ satisfies Assumption~\ref{Assumption:Convexity}, then so does $f_\epsilon$ for each $\epsilon > 0$.
\end{proposition}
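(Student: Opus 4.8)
The plan is to reduce the claim to the elementary fact that partial minimization of a jointly convex function is jointly convex in the remaining variables. Fix $\epsilon>0$. Since the convexity of $\mathbb{A}$ is part of the hypothesis and is untouched by the construction, it suffices to show that the map $(A,y)\mapsto f_\epsilon(g(X^A_T),y)$ is jointly convex on $\mathcal{A}\times\mathbb{R}^m$ almost surely. The starting observation is that, by the very definition \eqref{inf_conv},
\[
f_\epsilon(g(X^A_T),y) = \inf_{z\in\mathbb{R}^m} H(A,y,z), \qquad H(A,y,z) := f(g(X^A_T),z) + \frac{\|y-z\|^2}{2\epsilon},
\]
so the strategy is: (i) show $H$ is jointly convex in $(A,y,z)$, and (ii) conclude that its partial infimum over $z$ is jointly convex in $(A,y)$.

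For step (i): the term $f(g(X^A_T),z)$ is jointly convex in $(A,z)$ — this is exactly Assumption~\ref{Assumption:Convexity} with the dummy variable relabeled from $y$ to $z$ — and, not depending on $y$, it is jointly convex in $(A,y,z)$. The term $\|y-z\|^2/(2\epsilon)$ is the composition of the linear map $(y,z)\mapsto y-z$ with the convex function $\tfrac{1}{2\epsilon}\|\cdot\|^2$, hence convex in $(y,z)$, and being independent of $A$ it is jointly convex in $(A,y,z)$. A sum of jointly convex functions is jointly convex, so $H$ is jointly convex (on the convex set $\mathcal{A}\times\mathbb{R}^m\times\mathbb{R}^m$; recall $\mathcal{A}$ is convex since $\mathbb{A}$ is). For step (ii), I would invoke the standard partial-minimization principle (e.g.,~\cite{Boyd2004,Rockafellar1970}): the infimum of a jointly convex function over a convex slice is convex in the remaining arguments, and it is a genuine (finite) convex function provided the infimum is never $-\infty$. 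Finiteness is automatic here because Proposition~\ref{Prop:InfConvolutionProperties} guarantees $f_\epsilon$ is everywhere finite, so $f_\epsilon(g(X^A_T),y)\in\mathbb{R}$ almost surely. Combining (i) and (ii) yields that $(A,y)\mapsto f_\epsilon(g(X^A_T),y)$ is jointly convex almost surely, which is precisely Assumption~\ref{Assumption:Convexity} for $f_\epsilon$.

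There is no substantial obstacle; the only points deserving a line of care are bookkeeping. First, the ``almost surely'' qualifier: on the null set $N$ off of which $(A,y)\mapsto f(g(X^A_T(\omega)),y)$ is jointly convex, the same realization $\omega\mapsto g(X^A_T(\omega))$ makes $(A,z)\mapsto f(g(X^A_T(\omega)),z)$ jointly convex as well, so the identical null set $N$ works throughout and $H(\cdot,\cdot,\cdot)(\omega)$ is jointly convex for every $\omega\notin N$ — no enlargement of the exceptional set is needed. Second, one should make explicit that the partial-minimization lemma requires only finiteness of the infimum (not attainment), which is supplied by Proposition~\ref{Prop:InfConvolutionProperties}; this is the one place where an unqualified appeal to a textbook statement could be slightly imprecise, and I would spell it out.
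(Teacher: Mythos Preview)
Your proposal is correct and follows essentially the same route as the paper's own proof: show that $(A,y,z)\mapsto f(g(X^A_T),z)+\tfrac{1}{2\epsilon}\|y-z\|^2$ is jointly convex as a sum of convex functions, then pass to the partial infimum over $z$. Your version is in fact more careful than the paper's, which simply cites ``the same type of argument as in the proof of Theorem~\ref{Thm:Convexity}'' for the partial-minimization step without discussing finiteness or the almost-sure null set.
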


\begin{proof}
See Appendix~\ref{app:Prop7}.
\end{proof}

We now define the $\epsilon$-perturbed outer objective function as
\begin{equation} \nonumber
V_\epsilon(y) := \inf\limits_{A\in\mathcal{A}}\mathbb{E}\left[f_\epsilon(g(X^A_T),y)\right].
\end{equation}
For any $\epsilon >0$, the perturbed function $f_\epsilon$ satisfies Assumption~\ref{Assumption:UniformSemiconcavity}. Then, we can apply the results of Section~\ref{Section:MainResults} to minimize $V_\epsilon$. The goal of this subsection is to show convergence as $\epsilon\to 0$.

First, we record an estimate involving inf-convolution.

\begin{proposition}\label{prop:ic_estimate}
Suppose that $y\mapsto f(x,y)$ is uniformly Lipschitz continuous for all $x\in\mathbb{R}$. Fix $\epsilon >0$ and $(x,y)\in\mathbb{R}\times\mathbb{R}^m$. Then, we have
\begin{equation}\nonumber
f_\epsilon(x,y) \leq f(x,y) \leq f_\epsilon(x,y) + C\epsilon
\end{equation}
for a constant $C$ depending only on $f$.
\end{proposition}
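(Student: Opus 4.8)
The plan is to read off both inequalities directly from the definition of the inf-convolution in \eqref{inf_conv}, using the Lipschitz hypothesis only for the second one. For the left inequality, observe that taking the candidate $z = y$ in the infimum defining $f_\epsilon(x,y)$ gives
\[
f_\epsilon(x,y) = \inf_{z\in\mathbb{R}^m}\left[f(x,z)+\frac{\|y-z\|^2}{2\epsilon}\right] \leq f(x,y) + 0 = f(x,y),
\]
so $f_\epsilon(x,y)\leq f(x,y)$ with no assumptions beyond $f$ being real-valued.

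For the right inequality, let $L$ be a uniform Lipschitz constant for $y\mapsto f(x,y)$ (independent of $x$). For any $z\in\mathbb{R}^m$ we have $f(x,y)\leq f(x,z)+L\|y-z\|$, hence
\[
f(x,y) \leq f(x,z) + L\|y-z\| \leq f(x,z) + \frac{\|y-z\|^2}{2\epsilon} + \frac{L^2\epsilon}{2},
\]
where the last step uses the elementary bound $L t \leq \frac{t^2}{2\epsilon} + \frac{L^2\epsilon}{2}$ for $t=\|y-z\|\geq 0$ (i.e.\ Young's inequality, or completing the square in $(t/\sqrt{2\epsilon}-L\sqrt{\epsilon/2})^2\geq 0$). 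Taking the infimum over $z\in\mathbb{R}^m$ on the right-hand side yields $f(x,y)\leq f_\epsilon(x,y)+\tfrac12 L^2\epsilon$. Setting $C := \tfrac12 L^2$, which depends only on $f$ (through its Lipschitz constant in $y$) and not on $x$, $y$, or $\epsilon$, gives the claimed bound.

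There is essentially no obstacle here: the only point requiring a moment's care is that the constant $C$ must be uniform in $(x,y)$, which is exactly why the hypothesis is phrased as \emph{uniform} Lipschitz continuity of $y\mapsto f(x,y)$ across all $x$; with that in hand the estimate $Lt\leq \tfrac{t^2}{2\epsilon}+\tfrac{L^2\epsilon}{2}$ does all the work. One could alternatively extract the optimal $z^\star$ attaining the infimum (it exists since the objective is coercive and continuous in $z$) and bound $\|y-z^\star\|$ directly, but the Young-inequality argument above avoids even needing to invoke existence of a minimizer and is the cleaner route.
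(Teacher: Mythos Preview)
Your proof is correct and follows essentially the same approach as the paper: the first inequality via $z=y$, and the second via the Lipschitz bound combined with the elementary inequality $Lt \leq \tfrac{t^2}{2\epsilon} + \tfrac{L^2\epsilon}{2}$, yielding the same constant $C = \tfrac{1}{2}L^2$. The only cosmetic difference is that the paper first selects a minimizer $z^\star$ and then applies the inequality at that point, whereas you apply it for arbitrary $z$ and then take the infimum---as you note yourself, your route is marginally cleaner since it sidesteps the (easy) existence of a minimizer.
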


\begin{proof}
Taking $z=y$ in the inf-convolution operator \eqref{inf_conv}, we immediately have that
\begin{equation}\nonumber
f_\epsilon(x,y) \leq f(x,y).
\end{equation}
Next, let $z^\star \in\mathbb{R}^m$ be a minimizer in $f_\epsilon$. Using this and the uniform Lipschitz continuity of $f$, we have
\begin{equation}\nonumber
f_\epsilon(x,y) = f(x,z^\star) + \frac{\|y-z^\star\|^2}{2\epsilon} \geq f(x,y) - L\|y-z^\star\| + \frac{\|y-z^\star\|^2}{2\epsilon}
\end{equation}
for some $L>0$. On the other hand, by the Cauchy-Schwarz inequality, we have
\begin{equation}\nonumber
L\|y-z\| \leq \frac{1}{2}\epsilon L^2 + \frac{\|y-z\|^2}{2\epsilon}.
\end{equation}
Putting these inequalities together, we then obtain
\begin{equation}\nonumber
f_\epsilon(x,y) \geq f(x,y) - \frac{1}{2}\epsilon L^2.
\end{equation}
\end{proof}

Next, we prove a bound relating  the perturbed and unperturbed outer objective functions.

\begin{proposition}\label{Prop:SemiConcavePerturbedValueEstimate}
Suppose that $y\mapsto f(x,y)$ is uniformly Lipschitz continuous for all $x\in\mathbb{R}$. Fix $\epsilon >0$ and $y\in\mathbb{R}^m$. Then, we have
\begin{equation}\nonumber
V_\epsilon(y) \leq V(y) \leq V_\epsilon(y) + C\epsilon
\end{equation}
for a constant $C$ depending only on $f$.
\end{proposition}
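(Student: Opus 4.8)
The plan is to lift the pointwise sandwich bound of Proposition~\ref{prop:ic_estimate} to the level of the value functions, using nothing more than monotonicity of expectation and of the infimum over admissible controls. No new analytic machinery is needed; the only subtlety worth tracking is that the constant $C$ from Proposition~\ref{prop:ic_estimate} is uniform in $x$ and $y$ — which it is, since it depends only on the (uniform in $x$) Lipschitz constant of $y\mapsto f(x,y)$ — so that it survives passing to expectations and to the infimum over $\mathcal{A}$.

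First I would establish the lower bound $V_\epsilon(y)\le V(y)$. Fix an arbitrary $A\in\mathcal{A}$. Applying the left inequality of Proposition~\ref{prop:ic_estimate} pointwise with $x=g(X^A_T(\omega))$ and taking expectations gives $\mathbb{E}[f_\epsilon(g(X^A_T),y)]\le\mathbb{E}[f(g(X^A_T),y)]$, where both expectations are finite by the quadratic growth of $f$, the Lipschitz bounds on $g,\mu,\sigma$, and compactness of $\mathbb{A}$, exactly as in the proof of Theorem~\ref{Thm:GradientCharacterization}. Hence $V_\epsilon(y)\le\mathbb{E}[f_\epsilon(g(X^A_T),y)]\le\mathbb{E}[f(g(X^A_T),y)]$, and taking the infimum over $A\in\mathcal{A}$ on the right-hand side yields $V_\epsilon(y)\le V(y)$.

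Next I would establish the upper bound $V(y)\le V_\epsilon(y)+C\epsilon$. Again fix $A\in\mathcal{A}$. By the definition of $V$ as an infimum, $V(y)\le\mathbb{E}[f(g(X^A_T),y)]$, and by the right inequality of Proposition~\ref{prop:ic_estimate} applied pointwise and integrated, $\mathbb{E}[f(g(X^A_T),y)]\le\mathbb{E}[f_\epsilon(g(X^A_T),y)]+C\epsilon$ with the same constant $C$. Chaining these gives $V(y)\le\mathbb{E}[f_\epsilon(g(X^A_T),y)]+C\epsilon$ for every $A\in\mathcal{A}$, and taking the infimum over $A$ on the right-hand side gives $V(y)\le V_\epsilon(y)+C\epsilon$, which completes the proof.

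The only ``obstacle'' worth flagging is bookkeeping: in each of the two chains one must apply the pointwise estimate for a fixed but arbitrary $A$ and only afterwards take the infimum over controls, and on the correct (dominating) side of the inequality; one must also note that $C$ is independent of both $y$ and $A$, so the resulting two-sided bound is uniform in $y$. Everything else is an immediate consequence of Proposition~\ref{prop:ic_estimate}.
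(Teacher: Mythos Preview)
Your proposal is correct and follows essentially the same approach as the paper: apply the pointwise sandwich from Proposition~\ref{prop:ic_estimate} with $x=g(X^A_T)$, take expectations, and then pass to the infimum over $A\in\mathcal{A}$. Your write-up is in fact slightly more careful than the paper's about the order of operations and the uniformity of $C$, but the argument is the same.
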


\begin{proof}
By Proposition~\ref{prop:ic_estimate}, there exists a constant $C$ such that
\begin{equation}\nonumber
f_\epsilon(g(X^A_T),y) \leq f(g(X^A_T),y) \leq f_\epsilon(g(X^A_T),y) + C\epsilon \quad \text{almost surely},
\end{equation}
for all $A\in\mathcal{A}$. Taking expectation and using the fact that $A\in\mathcal{A}$ is arbitrary, we have
\begin{equation}\nonumber
V_\epsilon(y) \leq V_0(y) \leq V_\epsilon(y) + C\epsilon.
\end{equation}
Noting that $V_0(y)=V(y)$, the result follows.
\end{proof}

Finally, we use this result to prove a suboptimality bound when using this approximation.

\begin{theorem}[Convergence and suboptimality bound I]\label{Thm:bound2}
Suppose that $y\mapsto f(x,y)$ is uniformly Lipschitz continuous for all $x\in\mathbb{R}$. Fix $\epsilon >0$ and let $y^\star_\epsilon\in\mathbb{R}^m$ be a minimizer of $V_\epsilon$. Let $y^\star\in\mathbb{R}^m$ be a minimizer of the unperturbed value function $V$. Then, we have
\begin{equation}\nonumber
|V(y^\star)-V_\epsilon(y^\star_\epsilon)| \leq C\epsilon
\end{equation}
for a constant $C$ depending only on $f$.
\end{theorem}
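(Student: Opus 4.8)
The plan is to derive the bound purely from the uniform two-sided estimate already established in Proposition~\ref{Prop:SemiConcavePerturbedValueEstimate}, namely that $V_\epsilon(y)\le V(y)\le V_\epsilon(y)+C\epsilon$ holds at \emph{every} $y\in\mathbb{R}^m$ with the same constant $C$. No new analytic input is needed; the argument is a sandwiching of the two optimal values using the minimizing property of $y^\star$ and $y^\star_\epsilon$ in the two directions.

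First I would establish the upper estimate on $V_\epsilon(y^\star_\epsilon)$ relative to $V(y^\star)$: since $y^\star_\epsilon$ minimizes $V_\epsilon$, we have $V_\epsilon(y^\star_\epsilon)\le V_\epsilon(y^\star)$, and then applying the left-hand inequality of Proposition~\ref{Prop:SemiConcavePerturbedValueEstimate} at the point $y^\star$ gives $V_\epsilon(y^\star)\le V(y^\star)$. Chaining these yields $V_\epsilon(y^\star_\epsilon)\le V(y^\star)$.

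Next I would establish the reverse estimate. Because $y^\star$ minimizes $V$, we have $V(y^\star)\le V(y^\star_\epsilon)$, and applying the right-hand inequality of Proposition~\ref{Prop:SemiConcavePerturbedValueEstimate} at the point $y^\star_\epsilon$ gives $V(y^\star_\epsilon)\le V_\epsilon(y^\star_\epsilon)+C\epsilon$. Chaining these yields $V(y^\star)\le V_\epsilon(y^\star_\epsilon)+C\epsilon$. Combining the two chains gives
\[
V_\epsilon(y^\star_\epsilon)\;\le\; V(y^\star)\;\le\; V_\epsilon(y^\star_\epsilon)+C\epsilon,
\]
from which $|V(y^\star)-V_\epsilon(y^\star_\epsilon)|\le C\epsilon$ is immediate, with $C$ the same constant (depending only on $f$) appearing in Proposition~\ref{Prop:SemiConcavePerturbedValueEstimate}.

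There is essentially no hard step here; the only point requiring any care is to make sure the two-sided estimate of Proposition~\ref{Prop:SemiConcavePerturbedValueEstimate} is invoked at the \emph{correct} points ($y^\star$ for the lower side, $y^\star_\epsilon$ for the upper side) and that the minimality of each of $y^\star$ and $y^\star_\epsilon$ is used exactly once in the appropriate direction. One should also note for completeness that $V$ and $V_\epsilon$ are finite (as in the proof of Theorem~\ref{Thm:GradientCharacterization}) so that the differences are well-defined, and that the Lipschitz hypothesis on $y\mapsto f(x,y)$ is exactly what licenses the use of Proposition~\ref{Prop:SemiConcavePerturbedValueEstimate}.
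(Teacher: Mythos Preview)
Your proof is correct and uses essentially the same ingredients as the paper's proof: the two-sided estimate of Proposition~\ref{Prop:SemiConcavePerturbedValueEstimate} applied at $y^\star$ and $y^\star_\epsilon$, together with the minimality of each. The only difference is presentational---you argue directly by chaining the inequalities, whereas the paper phrases each direction as a proof by contradiction---and your direct version in fact yields the slightly sharper observation that $V(y^\star)-V_\epsilon(y^\star_\epsilon)\ge 0$.
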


\begin{proof}
Suppose that instead
\begin{equation}\nonumber
V(y^\star)-V_\epsilon(y^\star_\epsilon) > C\epsilon.
\end{equation}
Combining this with the minimality of $y^\star$, we have
\begin{equation}\nonumber
V_\epsilon(y^\star_\epsilon) + C\epsilon < V(y^\star) \leq V(y^\star_\epsilon),
\end{equation}
which contradicts Proposition~\ref{Prop:SemiConcavePerturbedValueEstimate}. A similar argument using the minimality of $y^\star_\epsilon$ is contradictory to the possibility that
\begin{equation}\nonumber
V_\epsilon(y^\star_\epsilon)-V(y^\star) > C\epsilon.
\end{equation}
Then, the result holds.
\end{proof}
Note that the constant $C$ is equal to $\frac{1}{2} L^2$.
Therefore, once we obtain an approximation $y_\epsilon^\star$, 
we can explicitly compute a bound, $C\epsilon$, of the gap between the approximately optimal objective value $V_\epsilon (y_\epsilon^\star)$ and the optimal value $V (y^\star)$.
This bound is useful to determine an appropriate resolution $\epsilon$ of the semiconcave approximation.
Furthermore, the bound is linear in $\epsilon$ and hence the suboptimality gap converges to zero as $\epsilon \to 0$ (see 
Figure~\ref{Fig:Convergence} in Section~\ref{result}
for a numerical experiment result).

\subsection{Uniformly Parabolic Approximation}\label{uni_app}

Relaxing the uniform parabolicity assumption (Assumption~\ref{Assumption:UniformEllipticity}) is particularly important when $0 \in \mathbb{A}$.
To relax Assumption~\ref{Assumption:UniformEllipticity}, we consider an approximation scheme for the case that the dynamics of $X$ do not satisfy the assumption. 
The key idea  is to perturb the dynamics of $X$ with extra sources of risk (or uncertainty) so that uniform parabolicity holds. We then show the resulting perturbed value function converges to that of the unperturbed problem as $\eta \to 0$.

Let $\hat W$ be an independent $n$-dimensional Brownian motion, and let $\hat{\mathcal{F}}$ be the filtration generated by the joint process $(W,\hat W)$. Let $\hat{\mathcal{A}}$ represent the collection of all $\hat{\mathcal{F}}$-adapted processes valued in $\mathbb{A}$. For each control $A\in\hat{\mathcal{A}}$ and $\eta \in\mathbb{R}$, define $\hat{X}^{A,\eta}$ as the unique strong solution of the perturbed SDE
\begin{equation}\nonumber
d\hat{X}^{A,\eta}_t = \mu(\hat{X}^{A,\eta}_t,A_t)\, dt + \sigma(\hat{X}^{A,\eta}_t,A_t)\,dW_t + \eta\,d\hat{W}_t.
\end{equation}
We define the following $\eta$-perturbed value function:
\begin{equation}\nonumber
V_\eta(y) := \inf\limits_{A\in\hat{\mathcal{A}}}\mathbb{E}\left[f(g(\hat{X}^{A,\eta}_T),y)\right].
\end{equation}
For any $\eta > 0$, the dynamics of the perturbed process $\hat{X}^{A, \eta}$ satisfy Assumption~\ref{Assumption:UniformEllipticity}. Then, we can apply the results of Section~\ref{Section:MainResults} to minimize $V_\eta$. The goal of this subsection is to show convergence to the unperturbed case as $\eta\to 0$.

First, we have the following estimate involving the value function under the perturbed dynamics.

\begin{proposition}\label{Prop:EpsilonInequality}
Suppose that $f$ is Lipschitz continuous. Fix $y\in\mathbb{R}^m$. Then, for any $\eta,\eta'\geq 0$, we have the estimate
\begin{equation}\nonumber
|V_\eta(y)-V_{\eta'}(y)| \leq C|\eta-\eta'|
\end{equation}
for some constant $C$ which depends only on $\mu$, $\sigma$, $\mathbb{A}$, $f$, and $g$.
\end{proposition}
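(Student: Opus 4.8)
The plan is to show that the difference $|V_\eta(y) - V_{\eta'}(y)|$ is controlled by first bounding, for any fixed admissible control, the $L^2$-distance between the two perturbed state trajectories $\hat X^{A,\eta}$ and $\hat X^{A,\eta'}$ driven by the same Brownian motions, and then transferring this to the value functions via the Lipschitz continuity of $f$ and $g$ together with a standard interchange-of-infimum argument. The key point is that both value functions are infima over the \emph{same} admissible class $\hat{\mathcal{A}}$, so it suffices to produce a uniform-in-$A$ bound on $\mathbb{E}\big[|f(g(\hat X^{A,\eta}_T),y) - f(g(\hat X^{A,\eta'}_T),y)|\big]$.

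\textbf{Step 1: Trajectory estimate.} Fix $A\in\hat{\mathcal{A}}$ and let $Z_t := \hat X^{A,\eta}_t - \hat X^{A,\eta'}_t$. Then $Z$ satisfies
\begin{equation}\nonumber
dZ_t = \big(\mu(\hat X^{A,\eta}_t,A_t) - \mu(\hat X^{A,\eta'}_t,A_t)\big)dt + \big(\sigma(\hat X^{A,\eta}_t,A_t) - \sigma(\hat X^{A,\eta'}_t,A_t)\big)dW_t + (\eta-\eta')\,d\hat W_t,
\end{equation}
with $Z_0 = 0$. Applying Itô's formula to $\|Z_t\|^2$, taking expectations, and using the Lipschitz bounds on $\mu$ and $\sigma$ (the paper's standing assumptions, with constant $K$), one obtains $\mathbb{E}\|Z_t\|^2 \leq C_1\int_0^t \mathbb{E}\|Z_s\|^2\,ds + C_2|\eta-\eta'|^2 t$ where $C_1, C_2$ depend only on $K$ and $n$. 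Grönwall's inequality then yields $\sup_{0\leq t\leq T}\mathbb{E}\|Z_t\|^2 \leq C_3|\eta-\eta'|^2$ with $C_3$ depending only on $K$, $n$, and $T$. In particular $\mathbb{E}\|\hat X^{A,\eta}_T - \hat X^{A,\eta'}_T\|^2 \leq C_3|\eta-\eta'|^2$, uniformly over all $A\in\hat{\mathcal{A}}$.

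\textbf{Step 2: Transfer to the value functions.} Since $g$ is Lipschitz with some constant $L_g$ and $f$ is Lipschitz in its first argument with some constant $L_f$ (both uniformly, by hypothesis), we get $|f(g(x),y) - f(g(x'),y)| \leq L_f L_g \|x-x'\|$ for all $x,x',y$. Hence, by Cauchy--Schwarz,
\begin{equation}\nonumber
\big|\mathbb{E}[f(g(\hat X^{A,\eta}_T),y)] - \mathbb{E}[f(g(\hat X^{A,\eta'}_T),y)]\big| \leq L_f L_g\,\mathbb{E}\|\hat X^{A,\eta}_T - \hat X^{A,\eta'}_T\| \leq L_f L_g \sqrt{C_3}\,|\eta-\eta'| =: C|\eta-\eta'|,
\end{equation}
uniformly in $A$. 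Taking the infimum over $A\in\hat{\mathcal{A}}$ on both sides and using the elementary fact that $|\inf_A \Phi(A) - \inf_A \Psi(A)| \leq \sup_A |\Phi(A) - \Psi(A)|$ gives $|V_\eta(y) - V_{\eta'}(y)| \leq C|\eta-\eta'|$, with $C$ depending only on $\mu$, $\sigma$ (through $K$), $f$, $g$, and $\mathbb{A}$ (and $T$, which we regard as fixed). This is the claimed bound.

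\textbf{Main obstacle.} There is no serious obstacle here — this is a routine perturbation estimate. The only mild subtlety is making sure the Grönwall argument is applied to the \emph{same} control and the \emph{same} driving noises $(W,\hat W)$ so that the difference process $Z$ has no control-dependent drift beyond the Lipschitz-controlled terms; the extra Brownian term $(\eta-\eta')d\hat W_t$ contributes only the deterministic $C_2|\eta-\eta'|^2 t$ piece and does not interact with the control. One should also note that when $\eta$ or $\eta'$ is $0$ the corresponding process lives on the smaller filtration $\{\mathcal{F}_t\}$, but since $\hat{\mathcal{A}} \supseteq \mathcal{A}$ and the perturbed SDE with $\eta=0$ reduces to \eqref{sde}, the infimum over $\hat{\mathcal{A}}$ coincides with that over $\mathcal{A}$ when $\eta = 0$ (the extra noise is useless when its coefficient vanishes and $f$, $g$ are convex — or more simply, one may just define $V_0$ over $\hat{\mathcal{A}}$ throughout), so the estimate extends continuously to the boundary case.
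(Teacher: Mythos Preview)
Your proof is correct and follows essentially the same approach as the paper: a Lipschitz/Gr\"onwall estimate on $\mathbb{E}\|\hat X^{A,\eta}_T-\hat X^{A,\eta'}_T\|^2$, followed by the Lipschitz continuity of $f\circ g$ and Cauchy--Schwarz, then passage to the infimum. The only cosmetic difference is that the paper takes a $\delta$-suboptimal control and argues by symmetry, whereas you use the uniform-in-$A$ bound together with $|\inf_A\Phi-\inf_A\Psi|\le\sup_A|\Phi-\Psi|$; these are equivalent. Your filtration remark at the end is unnecessary here since $V_\eta$ is defined over $\hat{\mathcal{A}}$ for all $\eta\ge 0$ (the identification $V_0=V$ is handled separately in the paper via uniqueness of viscosity solutions, not convexity).
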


\begin{proof}

Fix $y\in\mathbb{R}^m$ and $\eta,\eta'\geq 0$. For any $\delta >0$, let $A\in\hat{\mathcal{A}}$ be a $\delta$-suboptimal control such that
\begin{equation}\label{Eqn:EpsilonEstimate1}
V_\eta(y) + \delta \geq \mathbb{E}\left[f(g(\hat{X}^{A,\eta}_T),y)\right].
\end{equation}
By a standard argument using the Lipschitz constants of the perturbed dynamics, we have
\begin{equation}\nonumber
\mathbb{E}\left[\|\hat{X}^{A,\eta}_T-\hat{X}^{A,\eta'}_T\|^2\right] \leq C(\eta-\eta')^2
\end{equation}
for some constant $C$ which depends only on the Lipschitz constants of $\mu$ and $\sigma$ and the set $\mathbb{A}$ (for example, see \cite{Touzi2013}). Because $f$ and $g$ are  Lipschitz continuous, we have
\begin{eqnarray}\label{Eqn:EpsilonEstimate2}
\mathbb{E}\left[|f(g(\hat{X}^{A,\eta'}_T),y)-f(g(\hat{X}^{A,\eta}_T),y)|\right] & \leq & C\mathbb{E}\left[\|\hat{X}^{A,\eta'}_T-\hat{X}^{A,\eta}_T\|\right] \nonumber\\
& \leq & C\left(\mathbb{E}\left[\|\hat{X}^{A,\eta'}_T-\hat{X}^{A,\eta}_T\|^2\right]\right)^{1/2}\nonumber\\
& \leq & C|\eta-\eta'|
\end{eqnarray}
for a new constant $C$. Combining \eqref{Eqn:EpsilonEstimate1} and \eqref{Eqn:EpsilonEstimate2}, we conclude
\begin{equation}\nonumber
V_\eta(y) + \delta \geq \mathbb{E}\left[f(g(\hat{X}^{A,\eta'}_T),y)\right] - C|\eta-\eta'| \geq V_{\eta'}(y) - C|\eta-\eta'|.
\end{equation}
Because $\delta$ was arbitrary, and by symmetry between $\eta$ and $\eta'$, the desired result follows.

\end{proof}

\begin{remark}
In this proof we crucially assume that $f$ is Lipschitz continuous. Therefore, if we are to apply this approximation scheme to mean-variance optimization, we would need to attempt to modify the proof.
\end{remark}

Next, we  prove the intuitive statement that enlarging the filtration $\mathbb{F}$ to $\hat{\mathbb{F}}$ does not affect the value function when $\eta = 0$.

\begin{proposition}\label{Prop:EpsilonInequality2}
For any $y\in\mathbb{R}^m$, we have $V_0(y) = V(y)$.\end{proposition}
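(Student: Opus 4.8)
The plan is to prove the two inequalities $V_0(y)\le V(y)$ and $V_0(y)\ge V(y)$ separately; their combination expresses the intuitive fact that, when $\eta=0$, enlarging the driving filtration with an independent Brownian motion neither helps nor hurts the controller. The easy direction is $V_0(y)\le V(y)$: since $\mathcal{F}_t\subseteq\hat{\mathcal{F}}_t$ we have $\mathcal{A}\subseteq\hat{\mathcal{A}}$, and for $A\in\mathcal{A}$ the SDE defining $\hat{X}^{A,0}$ is exactly \eqref{sde}, so pathwise uniqueness gives $\hat{X}^{A,0}=X^{A}$ and hence $\mathbb{E}[f(g(\hat{X}^{A,0}_T),y)]=\mathbb{E}[f(g(X^{A}_T),y)]$. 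Taking the infimum over $A\in\mathcal{A}$, and noting this infimum is over a subset of $\hat{\mathcal{A}}$, yields $V_0(y)\le V(y)$.

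For the reverse inequality I would first reduce to a product structure: without loss of generality assume $\Omega=\Omega_1\times\Omega_2$ with $\mathbb{P}=\mathbb{P}_1\otimes\mathbb{P}_2$, where $(\Omega_1,\mathcal{F}^1,\mathbb{P}_1)$ carries $W$ with its augmented filtration $\mathcal{F}_t=\mathcal{F}^1_t$, $(\Omega_2,\mathcal{F}^2,\mathbb{P}_2)$ carries $\hat{W}$ with its augmented filtration, and $\hat{\mathcal{F}}_t$ is the completion of $\mathcal{F}^1_t\otimes\mathcal{F}^2_t$. Fix $A\in\hat{\mathcal{A}}$, taking a progressively measurable version. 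For $\mathbb{P}_2$-a.e.\ $\omega_2\in\Omega_2$ define the sliced control $A^{\omega_2}_t(\omega_1):=A_t(\omega_1,\omega_2)$; since sections of product-measurable functions are measurable (the completion being harmless, as we work throughout with completed filtrations), $A^{\omega_2}$ is $\mathcal{F}^1_t$-progressively measurable and $\mathbb{A}$-valued, hence $A^{\omega_2}\in\mathcal{A}$.

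Next I would identify the $\omega_2$-slice of the controlled state. For fixed $\omega_2$, the process $\omega_1\mapsto\hat{X}^{A,0}_{\cdot}(\omega_1,\omega_2)$ solves on $(\Omega_1,\mathcal{F}^1,\mathbb{P}_1)$ the SDE driven by $W$ with control $A^{\omega_2}$; the only point requiring justification is that the stochastic integral against $W$ on the product space restricts, for $\mathbb{P}_2$-a.e.\ $\omega_2$, to the corresponding It\^o integral on $\Omega_1$, which is the standard approximation-stability of the It\^o integral (approximate the integrand by simple processes, where the statement is obvious, and pass to a subsequential almost-sure limit). By pathwise uniqueness for \eqref{sde}, valid under the stated Lipschitz hypotheses, this slice equals $X^{A^{\omega_2}}$ up to a $\mathbb{P}_1$-null set, for $\mathbb{P}_2$-a.e.\ $\omega_2$. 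Since $\hat{X}^{A,0}_T\in L^p$ for all $p<\infty$ while $f$ has at most quadratic growth and $g$ is Lipschitz, the random variable $f(g(\hat{X}^{A,0}_T),y)$ is integrable, so Fubini's theorem gives
\[
\mathbb{E}\bigl[f(g(\hat{X}^{A,0}_T),y)\bigr]=\int_{\Omega_2}\mathbb{E}_{\mathbb{P}_1}\bigl[f(g(X^{A^{\omega_2}}_T),y)\bigr]\,\mathbb{P}_2(d\omega_2)\ge\int_{\Omega_2}V(y)\,\mathbb{P}_2(d\omega_2)=V(y),
\]
where the inequality uses $A^{\omega_2}\in\mathcal{A}$. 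Taking the infimum over $A\in\hat{\mathcal{A}}$ gives $V_0(y)\ge V(y)$, which together with the first part proves $V_0(y)=V(y)$.

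The main obstacle is the measure-theoretic bookkeeping in the last two paragraphs: verifying that the sliced controls $A^{\omega_2}$ are genuinely admissible for the $W$-problem, and that the strong solution $\hat{X}^{A,0}$ restricts on $\hat{W}$-fibers to the strong solution of the fiber SDE. Both are standard — the first from measurability of sections (with completions handled by passing to $\mathbb{P}_1$-a.e.\ equal versions), the second from approximation stability of the It\^o integral together with pathwise uniqueness — so I expect the write-up to be short once these are stated, everything else (inclusion of admissible sets, uniqueness, integrability, Fubini) being routine.
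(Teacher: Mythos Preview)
Your proof is correct, but it takes a genuinely different route from the paper. The paper's argument is a two-line PDE identification: define the dynamic value functions
\[
u(t,x):=\inf_{A\in\mathcal{A}}\mathbb{E}_t\bigl[f(g(X^A_T),y)\mid X^A_t=x\bigr],\qquad
\hat u(t,x):=\inf_{A\in\hat{\mathcal{A}}}\mathbb{E}_t\bigl[f(g(\hat X^{A,0}_T),y)\mid\hat X^{A,0}_t=x\bigr],
\]
observe that both are viscosity solutions of the \emph{same} HJB equation \eqref{Eqn:ValueHJB} (Proposition~\ref{Prop:ViscositySolution} applies verbatim since the coefficient on $\hat W$ vanishes), and conclude $u=\hat u$ by the comparison principle, hence $V(y)=u(0,\bm{x}_0)=\hat u(0,\bm{x}_0)=V_0(y)$.

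Your approach is purely probabilistic: the easy inclusion $\mathcal{A}\subseteq\hat{\mathcal{A}}$ gives one inequality, and a section/Fubini argument (slice by the $\hat W$-component, use pathwise uniqueness to identify the fiberwise solution, then average) gives the other. This is more elementary in that it avoids the viscosity-solution machinery entirely, and in particular it would survive in settings where the HJB comparison principle is delicate or unavailable. The price is the measure-theoretic bookkeeping you flag at the end --- admissibility of the sliced controls and the fiberwise compatibility of the It\^o integral --- which, while standard, is exactly the kind of detail the PDE argument sidesteps. Since the paper has already built the HJB framework for other purposes, its proof is essentially free; your argument is the one to reach for when that framework is not already in place.
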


\begin{proof}
Fix $y\in\mathbb{R}^m$ and consider the following two functions:
\begin{equation}\nonumber
\begin{split}
u(t,x) & :=  \inf\limits_{A\in\mathcal{A}}\mathbb{E}_t\left[f(g(X^A_T),y)\mid X^A_t=x\right] \\
\hat{u}(t,x) & :=  \inf\limits_{A\in\hat{\mathcal{A}}}\mathbb{E}_t\left[f(g(\hat{X}^A_T),y)\mid\hat{X}^A_t=x\right].\nonumber
\end{split}
\end{equation}
By Proposition~\ref{Prop:ViscositySolution}, both $u$ and $\hat{u}$ are the viscosity solutions of the same HJB equation. Furthermore,
\begin{equation}\nonumber
V(y) = u(0,x)\text{ and }V_0(y) = \hat{u}(0,x).
\end{equation}
Then, the result follows by the uniqueness of viscosity solutions of \eqref{Eqn:ValueHJB}.
\end{proof}

Finally, combining these two results, we show a suboptimality bound when using this uniformly parabolic approximation.

\begin{theorem}[Convergence and suboptimality bound II]\label{thm:bound1}
Suppose that $f$ is Lipschitz continuous. Fix $\eta >0$ and let $y^\star_\eta\in\mathbb{R}^m$ be a minimizer of $V_\eta$. Let $y^\star\in\mathbb{R}^m$ be a minimizer of the unperturbed value function $V$. Then, we have
\begin{equation}\nonumber
|V(y^\star)-V_\eta(y^\star_\eta)| \leq C\eta
\end{equation}
for some constant $C$ which depends only on $\mu$, $\sigma$, $\mathbb{A}$, $f$, and $g$.
\end{theorem}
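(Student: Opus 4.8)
The plan is to mirror the structure of the proof of Theorem~\ref{Thm:bound2}: use the two ingredients already established, namely the uniform bound between $V_\eta$ and $V$ of Proposition~\ref{Prop:EpsilonInequality} (with $\eta'=0$) together with the identification $V_0 = V$ of Proposition~\ref{Prop:EpsilonInequality2}, and then run a short contradiction argument exploiting the minimality of $y^\star$ and $y^\star_\eta$. Concretely, combining the two propositions yields $|V_\eta(y) - V(y)| \leq C\eta$ for \emph{every} $y\in\mathbb{R}^m$, with $C$ depending only on $\mu$, $\sigma$, $\mathbb{A}$, $f$, and $g$.

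The first step is to suppose for contradiction that $V(y^\star) - V_\eta(y^\star_\eta) > C\eta$. Using the minimality of $y^\star$ for $V$ we have $V(y^\star) \leq V(y^\star_\eta)$, hence $V_\eta(y^\star_\eta) + C\eta < V(y^\star_\eta)$, which contradicts the uniform estimate $|V_\eta(y^\star_\eta) - V(y^\star_\eta)|\leq C\eta$. The second step is the symmetric case: suppose $V_\eta(y^\star_\eta) - V(y^\star) > C\eta$; then using the minimality of $y^\star_\eta$ for $V_\eta$ we get $V_\eta(y^\star) \geq V_\eta(y^\star_\eta) > V(y^\star) + C\eta$, contradicting $|V_\eta(y^\star) - V(y^\star)| \leq C\eta$. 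Together these two steps give $|V(y^\star) - V_\eta(y^\star_\eta)| \leq C\eta$, which is the claim.

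I do not expect any genuine obstacle here; the only point requiring a little care is that the constant $C$ from Proposition~\ref{Prop:EpsilonInequality} is stated for the difference of \emph{two} perturbed value functions, so one must first observe that $\eta'=0$ is an admissible choice there (i.e. $V_0$ is well-defined on the enlarged filtration $\hat{\mathbb{F}}$) and then invoke Proposition~\ref{Prop:EpsilonInequality2} to replace $V_0(y)$ by $V(y)$. One should also note, as in the remark following Theorem~\ref{Thm:bound2}, that the Lipschitz continuity of $f$ is essential, so this bound does not directly cover the mean-variance case; this limitation is inherited from Proposition~\ref{Prop:EpsilonInequality} and need not be re-addressed in the proof.
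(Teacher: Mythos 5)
Your proposal is correct and follows essentially the same route as the paper: the paper's proof runs exactly this two-sided contradiction, using the minimality of $y^\star$ together with $V = V_0$ (Proposition~\ref{Prop:EpsilonInequality2}) and the $\eta'=0$ case of Proposition~\ref{Prop:EpsilonInequality}, and then the symmetric argument with the minimality of $y^\star_\eta$. The only cosmetic difference is that you fuse the two propositions into the single uniform bound $|V_\eta(y)-V(y)|\leq C\eta$ before arguing, whereas the paper keeps $V_0$ explicit in the chain of inequalities.
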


\begin{proof}
Suppose that 
\begin{equation}\nonumber
V(y^\star)-V_\eta(y^\star_\eta) > C\epsilon.
\end{equation}
Note that $y^\star$ is a minimizer, we have
\begin{equation}\nonumber
V_\eta(y^\star_\eta) + C\eta < V(y^\star) = V_0(y^\star) \leq V_0(y^\star_\eta),
\end{equation}
which is contradictory to Proposition~\ref{Prop:EpsilonInequality}. A similar argument using the minimality of $y^\star_\eta$ contradicts the possibility that
\begin{equation}\nonumber
V_\eta(y^\star_\eta)-V(y^\star) > C\eta.
\end{equation}
Then, the result follows.
\end{proof}

Note that we can obtain a solution to the original problem by choosing $\eta>0$ sufficiently small, depending on the constant $C$, and applying the gradient descent algorithm to minimize $V_\eta$. In practice, the constant $C$ can be computed explicitly in terms of the size of $\mathbb{A}$ and the Lipschitz constants on $\mu$, $\sigma$, $f$ and $g$ by following the steps in the proofs above. Consequently, 
using Theorem~\ref{thm:bound1}, we can explicitly compute a bound on the suboptimality gap between the obtained solution and the global minimizer $y^\star$. As $\eta \to 0$, the suboptimality gap tends to zero.

\subsection{Nonconvex Case}\label{Subsection:Nonconcave}

In the previous two subsections, we showed convergent approximation schemes when either the uniform semiconcavity or the uniform parabolicity assumption does not hold. In the case of the convexity assumption (Assumption~\ref{Assumption:Convexity}), we do not provide a direct approximation scheme, but instead note analogies of the results in Section~\ref{Section:MainResults} which lead to a proximal supergradient descent algorithm.

In particular, we show that in the absence of Assumption~\ref{Assumption:Convexity}, all results in Section~\ref{Section:MainResults} hold with the caveat that $V$ is semiconcave instead of convex.

\begin{corollary}
Suppose that, for any fixed $y\in\mathbb{R}^m$, there exists $A\in\mathcal{A}$ such that
\begin{equation}\nonumber
V(y) = \mathbb{E}\left[f(g(X^A_T),y)\right].
\end{equation}
If we denote by $\partial V(y)$ the collection of proximal supergradients of $V$ at $y$, we have
\begin{equation}\nonumber
\mathbb{E}\left[D_yf(g(X^A_T),y)\right] \in \partial V(y).
\end{equation}
\end{corollary}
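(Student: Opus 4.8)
The plan is to mirror the proof of Proposition~\ref{Prop:Semiconcavity} and Theorem~\ref{Thm:GradientCharacterization}, but to track exactly where Assumption~\ref{Assumption:Convexity} was used and replace the conclusion ``convex'' by ``semiconcave.'' First I would observe that Proposition~\ref{Prop:Semiconcavity} never invoked Assumption~\ref{Assumption:Convexity}: its proof only used Assumption~\ref{Assumption:UniformSemiconcavity} (applied pointwise along the optimal trajectory) together with the existence of a control $A\in\mathcal{A}$ attaining $V(y)=\mathbb{E}[f(g(X^A_T),y)]$. Hence, under the hypothesis of this corollary, we immediately retain the one-sided estimate
\begin{equation}\nonumber
V(y+\xi) \leq V(y) + \xi\cdot\mathbb{E}\left[D_yf(g(X^A_T),y)\right] + \frac{1}{2}M\|\xi\|^2\qquad\forall\xi\in\mathbb{R}^m.
\end{equation}
This is precisely the defining inequality of a proximal supergradient: a vector $\zeta$ is a proximal supergradient of $V$ at $y$ iff there exist $\sigma\geq 0$ and a neighborhood of $y$ on which $V(y+\xi)\leq V(y)+\xi\cdot\zeta+\sigma\|\xi\|^2$. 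Taking $\zeta := \mathbb{E}[D_yf(g(X^A_T),y)]$ and $\sigma := M/2$, the estimate above shows $\zeta\in\partial V(y)$, which is the claim.

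The only subtlety is to make sure the quantity $\mathbb{E}[D_yf(g(X^A_T),y)]$ is well-defined and finite. For this I would note that the uniform semiconcavity assumption together with convexity of $f$ in $y$ forces $y\mapsto f(x,y)$ to be $C^1$, and (as remarked after Assumption~\ref{Assumption:UniformSemiconcavity} and used in Theorem~\ref{Thm:GradientCharacterization}) the at-most-quadratic growth of $f$, the Lipschitz assumptions on $g$, $\mu$, $\sigma$, and compactness of $\mathbb{A}$ give $X^A_T\in L^p(\Omega)$ for all $p<\infty$, so $D_yf(g(X^A_T),y)$ is integrable. This is the same bookkeeping already carried out in the proof of Theorem~\ref{Thm:GradientCharacterization}, so nothing new is needed.

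I do not expect a genuine obstacle here; the corollary is essentially a restatement of Proposition~\ref{Prop:Semiconcavity} read through the vocabulary of proximal supergradients, and the ``hard part,'' such as it is, is merely to recall the definition of a proximal supergradient and to verify that the quadratic-error inequality we already possess matches that definition with the explicit constant $M/2$. In particular, we do \emph{not} need convexity of $V$: whereas Theorem~\ref{Thm:GradientCharacterization} combined the lower bound from convexity with the upper bound from semiconcavity to pin down a \emph{single} gradient, here we keep only the upper (semiconcavity) bound, which by itself certifies membership of the candidate vector in the (possibly set-valued) proximal superdifferential. One could optionally add a sentence noting that, consequently, the descent scheme of Section~\ref{Section:PDE} still applies with $DV(y)=\mathbb{E}[D_yf(g(X^{A^\star}_T),y)]$ computed via the gradient PDE \eqref{Eqn:GradientPDE}, now interpreted as a proximal-supergradient descent converging to a local minimizer.
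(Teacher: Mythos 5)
Your argument is correct and is exactly the paper's route: the paper disposes of this corollary with the single remark that it follows immediately from the inequality in Proposition~\ref{Prop:Semiconcavity}, which indeed never uses Assumption~\ref{Assumption:Convexity}, and your identification of that inequality (with constant $M/2$) as the defining proximal-supergradient estimate is precisely the intended reading. Nothing further is needed.
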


This follows immediately from the inequality proved in Proposition~\ref{Prop:Semiconcavity}. Furthermore, because of the PDE results of Section~\ref{Section:PDE}, we know that there exists an optimal control at each $y\in\mathbb{R}^m$. We make the following conclusion:	

\begin{corollary}
The function $V$ is semiconcave. For each $y\in\mathbb{R}^m$, we can compute $V(y)$ and a proximal supergradient at $y$ by solving PDEs \eqref{Eqn:ValueHJB} and \eqref{Eqn:GradientPDE}, respectively.
\end{corollary}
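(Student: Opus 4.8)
The plan is to assemble the statement from results already in hand, keeping Assumptions~\ref{Assumption:UniformSemiconcavity} and~\ref{Assumption:UniformEllipticity} in force and only discarding Assumption~\ref{Assumption:Convexity}. The first point to record is that an optimal control exists at \emph{every} $y\in\mathbb{R}^m$ without any appeal to convexity: by Proposition~\ref{Prop:ViscositySolution}, $V(y)=v(0,\bm{x}_0)$ for the viscosity solution $v$ of the HJB equation~\eqref{Eqn:ValueHJB}, and under Assumption~\ref{Assumption:UniformEllipticity} this equation is uniformly parabolic with right-hand side concave in $D_x^2v$, so part~(1) of Theorem~\ref{Thm:GradientPDE} furnishes a Markovian optimal control $A^{\star}$ (depending on $y$) with $V(y)=\mathbb{E}[f(g(X^{A^{\star}}_T),y)]$. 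This is exactly the hypothesis required by Proposition~\ref{Prop:Semiconcavity} and by the preceding corollary, and it is now available at each point.

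With this in place, semiconcavity of $V$ follows from Proposition~\ref{Prop:Semiconcavity}: applied at an arbitrary $y$ with its optimal control, it gives
\[
V(y+\xi)\;\le\;V(y)+\xi\cdot p_y+\tfrac12 M\|\xi\|^2\qquad\text{for all }\xi\in\mathbb{R}^m,
\]
with $p_y:=\mathbb{E}[D_y f(g(X^{A^{\star}}_T),y)]$ and $M$ the uniform constant of Assumption~\ref{Assumption:UniformSemiconcavity}, which is independent of $y$. Since $V$ is finite everywhere (by the argument in the proof of Theorem~\ref{Thm:GradientCharacterization}: quadratic growth of $f$, Lipschitz continuity of $g$, $\mu$, $\sigma$, and compactness of $\mathbb{A}$), the existence of such a supporting paraboloid from above at every point is the standard characterization of semiconcavity with linear modulus, i.e.\ $y\mapsto V(y)-\tfrac{M}{2}\|y\|^2$ is concave. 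This yields the first assertion.

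For the computational statement, $V(y)=v(0,\bm{x}_0)$ is Proposition~\ref{Prop:ViscositySolution} verbatim. For a proximal supergradient I would reuse part~(2) of Theorem~\ref{Thm:GradientPDE}, observing that its proof never used Assumption~\ref{Assumption:Convexity}: the function $w$ of~\eqref{Eqn:GradientViscositySolution} is shown to be a (possibly discontinuous) viscosity solution of the linear system~\eqref{Eqn:GradientPDE} using only the tower property~\eqref{Eqn:ConditionalTowerRule} and the existence of $A^{\star}$, and $w(0,\bm{x}_0)=\mathbb{E}[D_y f(g(X^{A^{\star}}_T),y)]=p_y$ because $X^{A^{\star}}_0=\bm{x}_0$. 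Convexity entered Theorem~\ref{Thm:GradientPDE} only in its final line, to identify $p_y$ with the true gradient $DV(y)$ via Theorem~\ref{Thm:GradientCharacterization}. Without it, the preceding corollary (itself just the inequality of Proposition~\ref{Prop:Semiconcavity}) identifies $p_y\in\partial V(y)$ as a proximal supergradient instead. Hence solving~\eqref{Eqn:ValueHJB} for the optimal feedback $a^{\star}$ and then the decoupled linear system~\eqref{Eqn:GradientPDE} produces a proximal supergradient $w(0,\bm{x}_0)\in\partial V(y)$, as claimed.

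The one genuinely new step is passing from the family of pointwise estimates of Proposition~\ref{Prop:Semiconcavity} to the global statement that $V$ is semiconcave, which is the routine equivalence quoted above. The remaining work is bookkeeping: verifying that each borrowed ingredient — the existence of optimal controls, Proposition~\ref{Prop:Semiconcavity}, and the viscosity-solution construction in Theorem~\ref{Thm:GradientPDE} — relies only on Assumptions~\ref{Assumption:UniformSemiconcavity} and~\ref{Assumption:UniformEllipticity}, so that dropping Assumption~\ref{Assumption:Convexity} introduces no circularity. I expect this bookkeeping, rather than any single estimate, to be the main thing to get right.
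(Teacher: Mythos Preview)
Your proposal is correct and follows essentially the same approach as the paper: the paper's justification is simply that the semiconcavity and proximal-supergradient statements follow from Proposition~\ref{Prop:Semiconcavity} (equivalently, the preceding corollary) once the PDE results of Section~\ref{Section:PDE} supply an optimal control at every $y$. Your write-up is more detailed---in particular your explicit check that Assumption~\ref{Assumption:Convexity} enters Theorem~\ref{Thm:GradientPDE} only in its final line identifying $p_y$ with $DV(y)$---but the logical skeleton is identical.
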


Consequently, the globally convergent gradient descent algorithm may be replaced with a proximal supergradient descent algorithm which converges to a local minimum. For more on tools from non-smooth analysis and semiconcavity, see \cite{Cannarsa2004,Katzourakis2015}.

\subsection{Proof of Theorem~\ref{Theorem:Summary}}\label{Subsection:ProofMainTheorem}

{{}At this point, the proof of Theorem~\ref{Theorem:Summary} is just a matter of piecing together the various results from the previous two sections. We provide a brief outline of where each step is used in the following:

\begin{proof}[Proof of Theorem~\ref{Theorem:Summary}]
Recall the statement of the theorem is broken into three pieces. We follow this separation in this proof.

\begin{enumerate}
\item The first section of the theorem is Proposition~\ref{Thm:BilevelOptimization}.

\item Applying the results of Sections~\ref{semi_app}--\ref{Subsection:Nonconcave}, it follows that the approximate problem satisfies Assumptions~\ref{Assumption:UniformSemiconcavity} and \ref{Assumption:UniformEllipticity}. Assumption~\ref{Assumption:UniformSemiconcavity} follows from Proposition~\ref{Prop:InfConvolutionProperties}, while Assumption~\ref{Assumption:UniformEllipticity} follows from directly from the dynamics of the approximate problem. Then the results of the second section of the theorem follow from the discussion in Section~\ref{Subsection:Nonconcave}.

\item Lastly, we consider the convergence result. By combining Theorem~\ref{Thm:bound2} and Theorem~\ref{thm:bound1}, we immediately conclude that
\[|V(y^\star) - V_{\epsilon,\eta}(y^\star_{\epsilon,\eta})| \leq C(\epsilon+\eta)\]
for some $C>0$ which depends only on the Lipschitz constants of $f$, $g$, $\mu$, and $\sigma$. Similarly, applying Propositions~\ref{Prop:SemiConcavePerturbedValueEstimate}--\ref{Prop:EpsilonInequality2}, we conclude that
\[V_{\epsilon,\eta}(y^\star_{\epsilon,\eta})\leq V_{\epsilon,\eta}(y^\star) + C\left(\epsilon+\eta\right)\]
for some (possibly different) $C>0$, which also depends only on the Lipschitz constants above. Lastly, combining these two inequalities, we conclude the stated convergence result.
\end{enumerate}
\end{proof}
}

\section{Example: Mean-CVaR Portfolio Optimization}\label{Section:Example}

In this section, we illustrate a practical use of our main results and approximation methods in an application to portfolio optimization under a mean-CVaR objective. Our goal is to ultimately use this methodology to compute the efficient frontier representing the trade-off between maximizing expected log-return and minimizing the CVaR of losses. We emphasize that dynamic optimization can significantly reduce CVaR while maintaining the same expected return as compared to optimal static investment strategies.

\subsection{Problem Formulation}

Consider a market consisting of $n$ risky assets evolving via the SDE
\begin{equation} \nonumber
\frac{dS^{(i)}_t}{S^{(i)}_t} = \mu_i \,dt + \Sigma^{1/2}_{i,j}\,dW^{(j)}_t
\end{equation}
for each $i\in\{1,\ldots ,n\}$ and $j \in \{1, \ldots, d\}$. Here $\mu\in\mathbb{R}^n$ is a vector of drifts and $\Sigma$ is the covariance matrix of returns. We also assume that there exists a risk-free asset with drift $r$.

We assume that we choose a control $A$, which is a progressively-measurable process lying in some compact set $\mathbb{A}$, representing the percent of the portfolio exposed to each of the $n$ risky assets. For example, we might choose
$\mathbb{A} := \left\{a\in\mathbb{R}^n\mid a^\top\Sigma a\leq l \right\}$
for a constant $l$ corresponding to a hard portfolio risk cap.

With this setup, our portfolio value $Z$ evolves via the SDE
\begin{equation}\nonumber
\frac{dZ^A_t}{Z^A_t} = \left[r+A_t^\top(\mu-r\,1)\right]\, dt + A_t^\top \Sigma^{1/2}\,dW_t.
\end{equation}
For simplicity, we consider the log value of the portfolio, $X^A_t := \log Z^A_t$, which can be seen to solve
\begin{equation}\nonumber
dX^A_t = \left[r + A_t^\top(\mu-r\,1)-\frac{1}{2}A_t^\top\Sigma A_t\right]\,dt + A_t^\top\Sigma^{1/2}\,dW_t.
\end{equation}
Without loss of generality, we assume $Z^A_0 = S_0 = 1$. Then, $X^A_0 = 0$ and we can interpret $X^A_t$ as the log-returns of the portfolio up to time $t$.

In this section, we consider the problem of minimizing a mean-CVaR objective:
\begin{equation}\label{Eqn:ExampleProblem}
\inf\limits_{A\in\mathcal{A}}\left[\mathbb{E}\left[-X^A_T\right]+\lambda\text{CVaR}_\alpha\left[-X^A_T\right]\right]
\end{equation}
for fixed $\lambda >0$ and $\alpha\in(0,1)$. By varying $\lambda$, we can compute a subset of the efficient frontier between expected log-return and the CVaR of losses.

\subsection{Solution via Gradient Descent}\label{Subection:NumericalApprox}

From the results of Section~\ref{Section:MainResults},  the problem \eqref{Eqn:ExampleProblem} is equivalent to the bilevel optimization
\begin{equation}\nonumber
\inf\limits_{y\in\mathbb{R}} V(y),
\end{equation}
where
\begin{equation}\nonumber
V(y) := \inf\limits_{A\in\mathcal{A}}\mathbb{E}\left[f(g(X^A_T),y)\right].
\end{equation}
Here, we take $g(X_T^A) = -X_T^A$ and
\begin{equation}\nonumber
f(x,y) := x + \lambda\left(y+\frac{1}{1-\alpha}\left(x-y\right)^+\right).
\end{equation}
We can check that $A\mapsto g(X^A_T)$ is convex almost surely and $f$ is convex and non-decreasing in $x$. Therefore, Assumption~\ref{Assumption:Convexity} is satisfied, i.e., $(A, y) \mapsto f(g(X_T^A),y)$ is jointly convex. However, both Assumption~\ref{Assumption:UniformSemiconcavity} and Assumption~\ref{Assumption:UniformEllipticity} are potentially violated. Therefore, we need to apply the approximations of Section~\ref{Section:Approximation} before we can proceed with the proposed gradient descent method.

\begin{theorem}\label{Thm:ApproximationForExample}
For any $\epsilon >0$, we have the inf-convolution of $f (g(x), y)$ is
\begin{equation}\nonumber
f_\epsilon(g(x),y) := \left\{\begin{array}{ll}
-x + \lambda\left(y - \frac{x+y}{1-\alpha}\right)-\frac{1}{2}\left(\frac{\alpha}{1-\alpha}\lambda\right)^2\epsilon & \text{for }x+y<-\frac{\alpha}{1-\alpha}\lambda\epsilon\\
\frac{1}{2\epsilon}\left(y+x\right)^2-(1+\lambda)x& \text{for }-\frac{\alpha}{1-\alpha}\lambda\epsilon\leq x+y\leq \lambda\epsilon \\
-x + \lambda y-\frac{1}{2}\lambda^2\epsilon & \text{for }x+y>\lambda\epsilon.
\end{array}\right.
\end{equation}
If we also consider the perturbed dynamics
\begin{equation}\nonumber
d\hat{X}^{A,\epsilon}_t = \left[r+A^\top_t\left(\mu-r\,1\right)-\frac{1}{2}A_t^\top\Sigma A_t\right]\,dt + A_t^\top\Sigma^{1/2}\,dW_t + \epsilon\,d\hat{W}_t
\end{equation}
and the perturbed value function
\begin{equation}\nonumber
V_\epsilon(y) := \inf\limits_{A\in\hat{\mathcal{A}}}\mathbb{E}\left[f_\epsilon\left(g(\hat{X}^{A,\epsilon}_T),y\right)\right],
\end{equation}
then Assumptions~\ref{Assumption:UniformSemiconcavity}--\ref{Assumption:Convexity} all hold and we can apply the gradient descent method from Section \ref{Section:MainResults} to minimize $V_\epsilon$. Furthermore, there exists a constant $C$ which depends only on $\mu$, $r$, $\Sigma$, $\lambda$, $\alpha$, and $\mathbb{A}$ such that
\begin{equation}\nonumber
|V(y^\star)-V_\epsilon(y^\star_\epsilon)|\leq C\epsilon
\end{equation}
for any $y^\star_\epsilon\in\mathbb{R}^m$ which minimizes $V_\epsilon$ and $y^\star\in\mathbb{R}^m$ which minimizes $V$.
\end{theorem}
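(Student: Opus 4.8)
The plan is to establish the three assertions in the stated order: first derive the closed form for the inf-convolution $f_\epsilon(g(x),y)$, then verify Assumptions~\ref{Assumption:UniformSemiconcavity}--\ref{Assumption:Convexity} for the perturbed problem, and finally obtain the $O(\epsilon)$ bound by assembling the estimates already proved in Section~\ref{Section:Approximation} (so that this theorem is essentially the specialization of Theorem~\ref{Theorem:Summary} with $\eta=\epsilon$).

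For the closed form, fix $x$ and observe that $z\mapsto f(g(x),z)=-x+\lambda z+\frac{\lambda}{1-\alpha}(-x-z)^+$ is a convex piecewise-affine function of the scalar $z$ with a single kink at $z=-x$; its slope equals $-\frac{\alpha\lambda}{1-\alpha}$ on $\{z<-x\}$ and $\lambda$ on $\{z>-x\}$. Computing \eqref{inf_conv} is then exactly the evaluation of the Moreau envelope of this one-dimensional piecewise-affine convex function. The first-order condition $y-z=\epsilon\,\partial_z f(g(x),z)$ has the interior solution $z^\star=y-\epsilon\lambda$ precisely when it lies in $\{z>-x\}$, i.e. when $x+y>\lambda\epsilon$, and the interior solution $z^\star=y+\epsilon\frac{\alpha\lambda}{1-\alpha}$ precisely when it lies in $\{z<-x\}$, i.e. when $x+y<-\frac{\alpha\lambda}{1-\alpha}\epsilon$; in the remaining range the minimizer sticks at the kink, $z^\star=-x$. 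Substituting each $z^\star$ into $f(g(x),z^\star)+\frac{(y-z^\star)^2}{2\epsilon}$ produces the three branches in the statement, and a short computation confirms that the branches agree (in fact match to first order) at the two thresholds $x+y=-\frac{\alpha\lambda}{1-\alpha}\epsilon$ and $x+y=\lambda\epsilon$, which simultaneously certifies that the case conditions partition $\mathbb{R}$ correctly. This step is purely computational.

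For the assumptions: Assumption~\ref{Assumption:UniformSemiconcavity} is immediate from Proposition~\ref{Prop:InfConvolutionProperties}, since the $\epsilon$-semiconcavity of $y\mapsto f_\epsilon(\cdot,y)$ is uniform in the first argument and hence persists with first argument $g(\hat X^{A,\epsilon}_T)$. Assumption~\ref{Assumption:UniformEllipticity} holds because the (scalar) augmented state $\hat X^{A,\epsilon}$ has diffusion row $(A_t^\top\Sigma^{1/2},\,\epsilon)$, so the relevant $\sigma\sigma^\top$ equals $A_t^\top\Sigma A_t+\epsilon^2\ge\epsilon^2>0$ uniformly in $(x,a)$. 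For Assumption~\ref{Assumption:Convexity}, the drift integrand $r+a^\top(\mu-r\mathbf{1})-\frac12 a^\top\Sigma a$ is concave in $a$, the diffusion is affine in $a$, and the extra noise $\epsilon\,d\hat W$ does not involve $A$; hence $A\mapsto\hat X^{A,\epsilon}_T$ is pathwise concave and $A\mapsto g(\hat X^{A,\epsilon}_T)=-\hat X^{A,\epsilon}_T$ is pathwise convex. Since $\mathbb{A}$ is convex and $x\mapsto f(x,y)$ is nondecreasing and convex for each $y$, $(A,y)\mapsto f(g(\hat X^{A,\epsilon}_T),y)$ is jointly convex, and Proposition~\ref{Prop:InfConvPreseveresConvexity} shows this is preserved when $f$ is replaced by $f_\epsilon$.

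Finally, for the bound I would split the perturbation into its two independent parts and invoke the corresponding results already in hand. Here $f$ is globally Lipschitz — piecewise-affine with explicitly bounded slopes, the Lipschitz constant in $y$ being $\max\{\lambda,\frac{\alpha\lambda}{1-\alpha}\}$ — and so is $g$; moreover inf-convolution does not increase the Lipschitz constant, so $f_\epsilon$ is Lipschitz with the same constant as $f$. Replacing $f$ by $f_\epsilon$ while keeping the original dynamics costs at most $C_1\epsilon$ by Proposition~\ref{Prop:SemiConcavePerturbedValueEstimate} (with $C_1=\frac12$ times the squared Lipschitz constant of $f$ in $y$), and then replacing the original dynamics by the $\epsilon$-perturbed dynamics while keeping $f_\epsilon$ costs at most $C_2\epsilon$ by Proposition~\ref{Prop:EpsilonInequality} together with Proposition~\ref{Prop:EpsilonInequality2}; here the underlying estimate is in fact exact, $\mathbb{E}\|\hat X^{A,\epsilon}_T-\hat X^{A,0}_T\|^2=\epsilon^2 T$, because $\mu$ and $\sigma$ do not depend on the state, so $C_2$ depends only on $T$, the Lipschitz constants of $f$ and $g$, and the diameter of $\mathbb{A}$. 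Adding the two estimates gives $|V(y)-V_\epsilon(y)|\le C\epsilon$ for every $y$, with $C$ a function only of $\mu,r,\Sigma,\lambda,\alpha,\mathbb{A}$ (and $T$); the passage to $|V(y^\star)-V_\epsilon(y^\star_\epsilon)|\le C\epsilon$ is then the same two-sided minimality/contradiction argument used in the proofs of Theorems~\ref{Thm:bound2} and~\ref{thm:bound1}. I expect the only real friction to be the bookkeeping in the first step — tracking the three regimes, their threshold inequalities, and the sign conventions, and verifying continuity of $f_\epsilon$ across the thresholds; a secondary point worth recording explicitly is that $g(\hat X^{A,\epsilon}_T)\in L^p$ for all $p<\infty$ and $f$ is globally Lipschitz, so all finiteness and Lipschitz hypotheses of the cited propositions are genuinely satisfied here despite $f$ being unbounded.
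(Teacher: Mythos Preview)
Your proposal is correct and follows essentially the same route as the paper, which merely sketches the argument: the paper says the inf-convolution formula is ``an exercise in minimizing piecewise functions,'' that Assumptions~\ref{Assumption:UniformSemiconcavity}--\ref{Assumption:Convexity} are verified directly (the last via the sufficient conditions in Example~\ref{Example:SufficientConditionsConvexity}), and that the error bound follows by combining the estimates of Sections~\ref{semi_app} and~\ref{uni_app}. Your outline fills in exactly these steps with the expected level of detail, including the Moreau-envelope computation via the three regimes for $z^\star$ and the two-part decomposition of the $O(\epsilon)$ bound; the only cosmetic point is that your constant picks up a $T$-dependence, which the paper suppresses since $T$ is fixed throughout.
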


The proof is elementary but tedious. Verifying the proposed expression for $f_\epsilon$ is an exercise in minimizing piecewise functions. We check that $f_\epsilon$ satisfies the Assumption~\ref{Assumption:UniformSemiconcavity}, the new dynamics clearly satisfy Assumption~\ref{Assumption:UniformEllipticity}, and together they satisfy Assumption~\ref{Assumption:Convexity} by checking the sufficient conditions in Example~\ref{Example:SufficientConditionsConvexity}. Finally, the error bounds result from combining the error bounds from Sections~\ref{semi_app} and~\ref{uni_app}.

\begin{remark}
In this theorem we use a single parameter, $\epsilon$, for both approximation schemes. In practice, we can modify this result to use separate parameters for each scheme. This could be useful for separating out the numerical error due to each approximation.
\end{remark}

\subsection{Numerical Results: Computation of an Efficient Frontier}
\label{result}

In this section we consider a concrete example involving selection between a single risky asset, representing a US stock index, and a risk-free asset. We compute an efficient frontier representing the trade-off between expected log-return and CVaR when using optimal dynamic strategies. For comparison, we compare to an efficient frontier when restricting to static strategies, i.e. strategies where $A$ is constant over time, representing a fixed leverage ratio.

\begin{figure}[tb]
\centering
\includegraphics[height=2in]{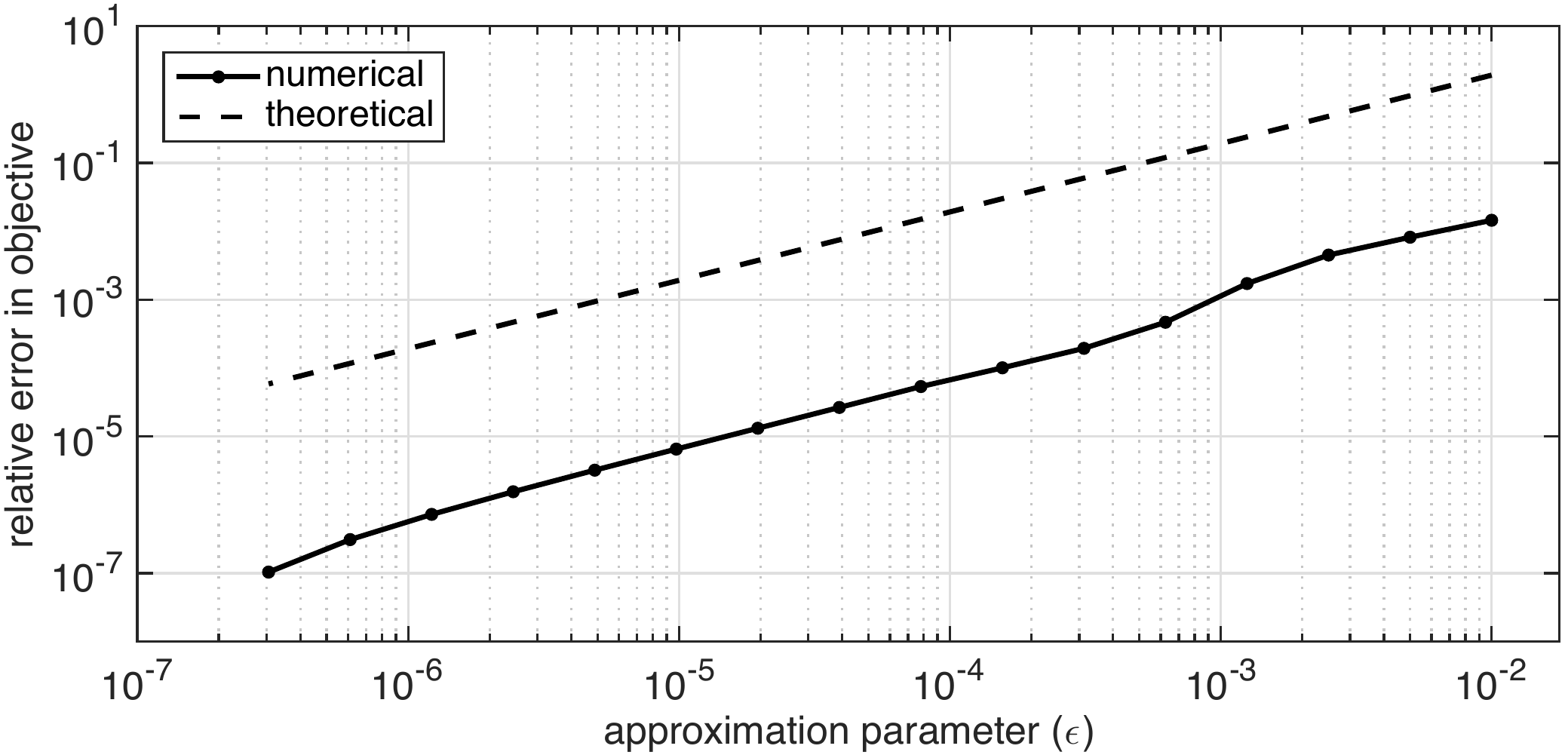}
\caption{
The relative error $|V(y^\star) - V_\epsilon (y_\epsilon^\star)|$ in the objective.
We compare a numerical error (solid) and 
 an explicit calculation of the theoretical error bounds (dashed) provided in Section~\ref{Section:Approximation}.}
\label{Fig:Convergence}
\end{figure}

For our example, we choose $\mu=11\%$, $\sigma=20\%$\footnote{This choice corresponds, roughly, to the historical arithmetic mean and standard deviation of annual returns on the S\&P 500, including dividend reinvestment, over the period 1928--2014. However, we emphasize that the exact choice of parameters should not be taken too seriously in this example.}, and $r=1\%$ as market parameters. We take our time horizon as $T=1$ and constrain our leverage ratio to lie within the range $\mathbb{A} := [-6,+6].$\footnote{We choose this range to correspond, roughly, to the maximum leverage a qualifying US investor can achieve with a portfolio margin policy, as described at http://www.finra.org/industry/portfolio-margin-faq. In practice, the exact constraints depend upon the type of investor and financial instruments used for investment. We emphasize that this choice is meant for illustration only.} Finally, we consider CVaR at the $\alpha=95\%$ threshold.

For each fixed $\lambda > 0$, we solve the corresponding dynamic mean-CVaR optimization problem using the techniques outlined in Section~\ref{Subection:NumericalApprox}.\footnote{{{} We use finite-difference solvers for PDEs~\eqref{Eqn:ValueHJB} and \eqref{Eqn:GradientPDE}. We apply upwinding methods to obtain a monotone scheme when solving the gradient PDE \cite{Courant1952,Barles1991}. It is difficult to make any quantitative statements regarding the choice of discretization parameters. However, we note that the approximation parameter, $\epsilon$, should not be too small compared to the mesh-spacing used in the finite-difference method or the difference between $f$ and $f_\epsilon$ will become numerically irresolvable.}} An example of the convergence of the approximation scheme described in Theorem~\ref{Thm:ApproximationForExample}, compared with the corresponding theoretical error bounds, is shown in Figure~\ref{Fig:Convergence}.
This illustrates the linear decrease in relative error from approximation as a function of the approximation parameter $\epsilon$. 

\begin{figure}[tb]
\centering
\includegraphics[height=2in]{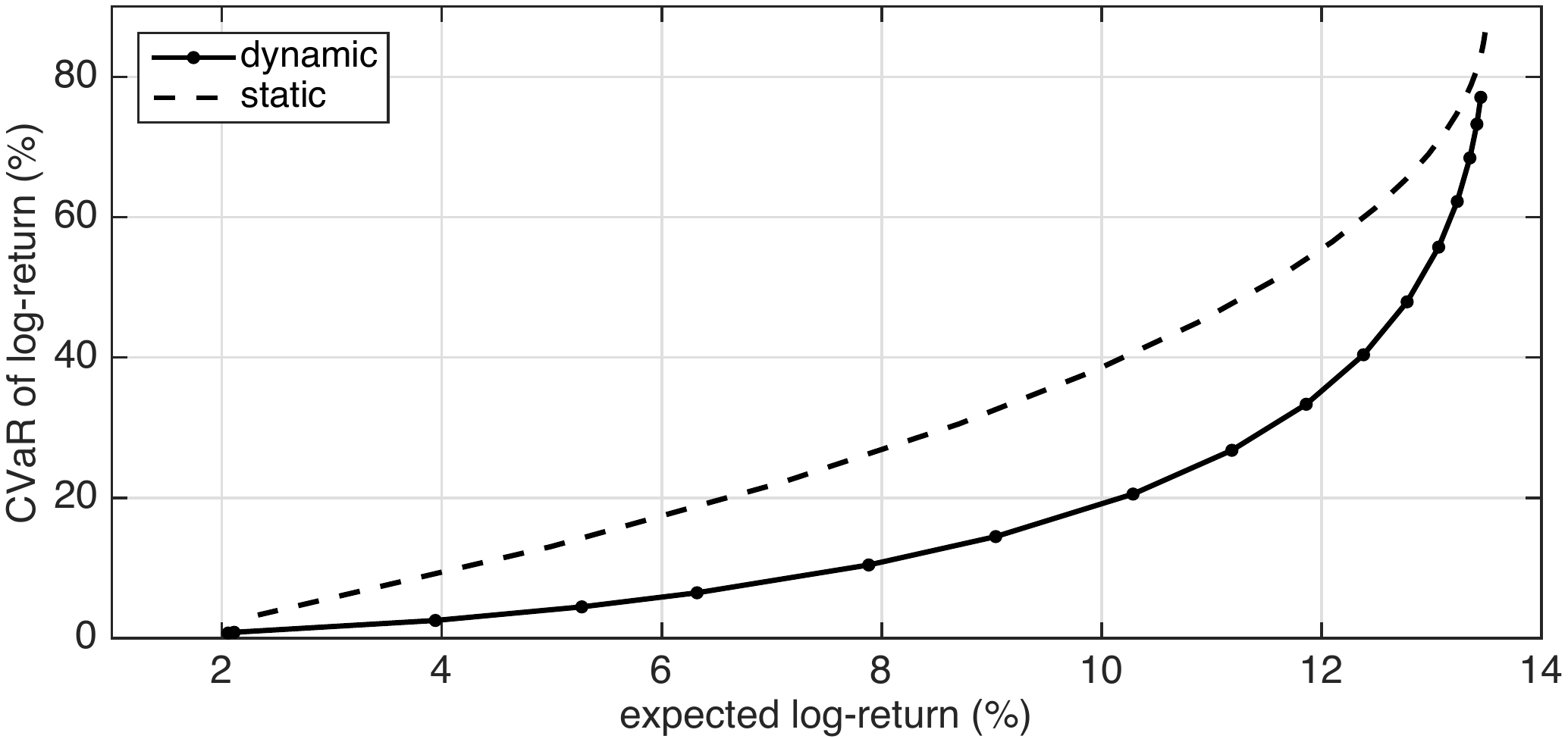}
\caption{The efficient frontier of Mean-CVaR portfolio optimization, representing the possible trade-off between maximizing expected log-returns and minimizing CVaR, computed by varying $\lambda \in (0,1]$.}
\label{Fig:EfficientFrontier}
\end{figure}

We compute points on the efficient frontier between expected log-return and CVaR by varying $\lambda$ over the interval $(0,1]$. At each value of $\lambda$, we compute the expected log-return under the optimal control by solving a linear parabolic equation analogous to \eqref{Eqn:GradientPDE}. We then compute the corresponding CVaR using our computation of expected log-return and the mean-CVaR objective.\footnote{We emphasize that one might alternatively choose to run Monte Carlo simulations using the optimal feedback control $a^\star(t,x)$ to obtain estimates of the expected log-return and CVaR. For more on this scalarization methodology to compute points on the efficient frontier, see \cite{Boyd2004}.}
The resulting frontier is shown in Figure~\ref{Fig:EfficientFrontier} (solid).

For comparison, we consider the same optimization problem when restricted to a subcollection of static controls, defined as
\begin{equation} \nonumber
\mathcal{A}_{\text{static}} := \left\{A\in\mathcal{A}\mid\exists a\in\mathbb{A}\text{ such that }A(t)=a\text{ for all }t\in[0,T]\text{ a.s.}\right\}.
\end{equation}
These strategies represent constant leverage portfolios. An important example of these is the ``buy and hold'' strategy, e.g. $A(t)\equiv 1$. Under this class of controls, $X^A_T$ is normally-distributed. Therefore, we can directly compute optimal strategies and construct the efficient frontier.

In Figure~\ref{Fig:EfficientFrontier}, we illustrate a comparison between the efficient frontier under our dynamic strategies and under static strategies. We see that by employing strategies with dynamic leverage, we can significantly reduce CVaR at the 95\% quantile while maintaining the same expected log-return, as compared to a static leverage strategy. Similarly, we can increase expected log-return while maintaining the same CVaR using a dynamic strategy. For example, the static buy-and-hold strategy,  $A(t) \equiv 1$, has an expected log-return of 9\% and CVaR of approximately 32\%. By employing strategies with dynamic leverage, we can reduce CVaR by approximately 50\% while maintaining the same expected log-return, or alternatively increase expected log-return by approximately 30\% while maintaining the same CVaR.

\begin{figure}[tb]
\centering
\includegraphics[height=2in]{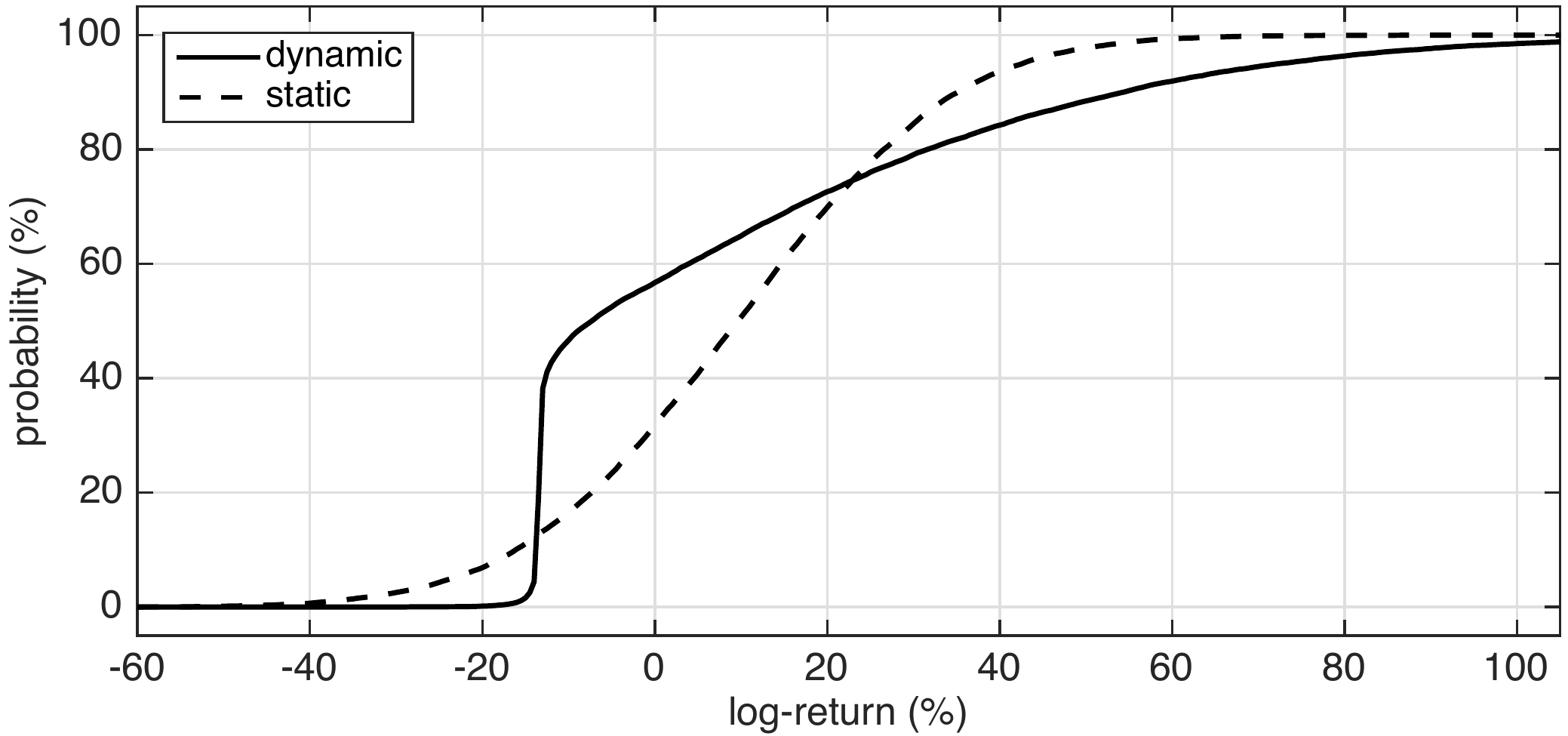}
\caption{The cumulative distribution function of $X^A_T$ when following the static buy-and-hold strategy and the optimal dynamic strategy which achieves the same expected log-return.}
\label{Fig:Distribution}
\end{figure}

We next turn our attention to an examination of statistical and qualitative properties of the optimal dynamic control and resulting returns. In Figure~\ref{Fig:Distribution}, we illustrate the cumulative distribution function (CDF) of $X^A_T$ under the optimal dynamic control corresponding an expected log-return of 9\%. We compare this to the CDF of $X^A_T$ under the buy-and-hold strategy, which follows a normal distribution. While both of these distributions have the same expected value, the one corresponding to the optimal dynamic strategy has significantly fatter (right) tails on the upside and an effective (left) floor on losses on the downside. We attribute this to a (de-)leveraging effect of the dynamic strategy whereby it increases leverage significantly once it has ``locked in'' gains and will de-leverage only as needed to discourage losses exceeding a certain threshold.

\begin{figure}[tb]
\centering
\includegraphics[height=3in]{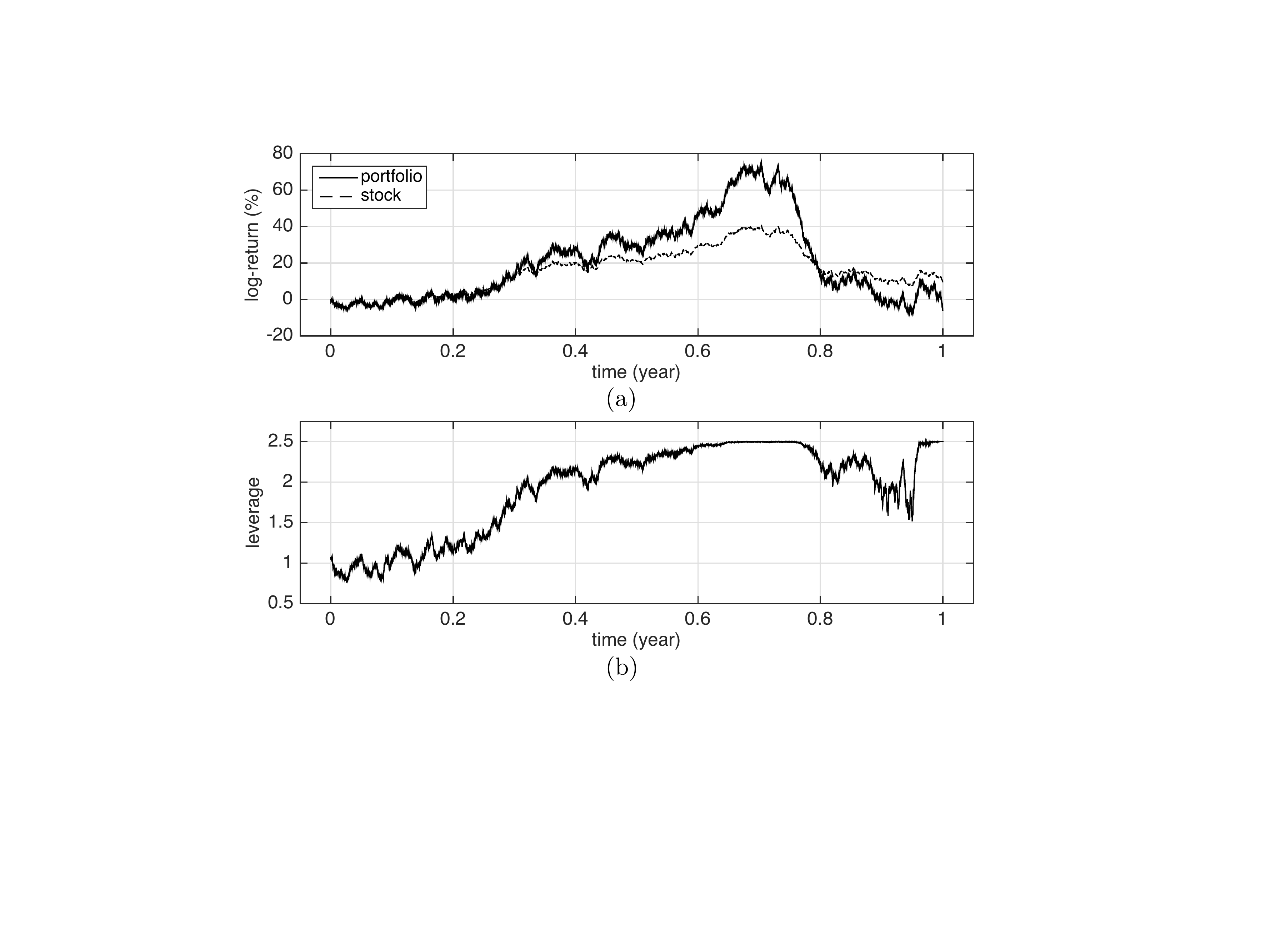}
\caption{(a) A sample path of stock prices and the corresponding portfolio log-return process ($X^{A^\star}$), and (b) the corresponding optimal leverage process ($A^\star$).}
\label{Fig:Pathwise}
\end{figure}

\begin{figure}[tb]
\centering
\includegraphics[height=3in]{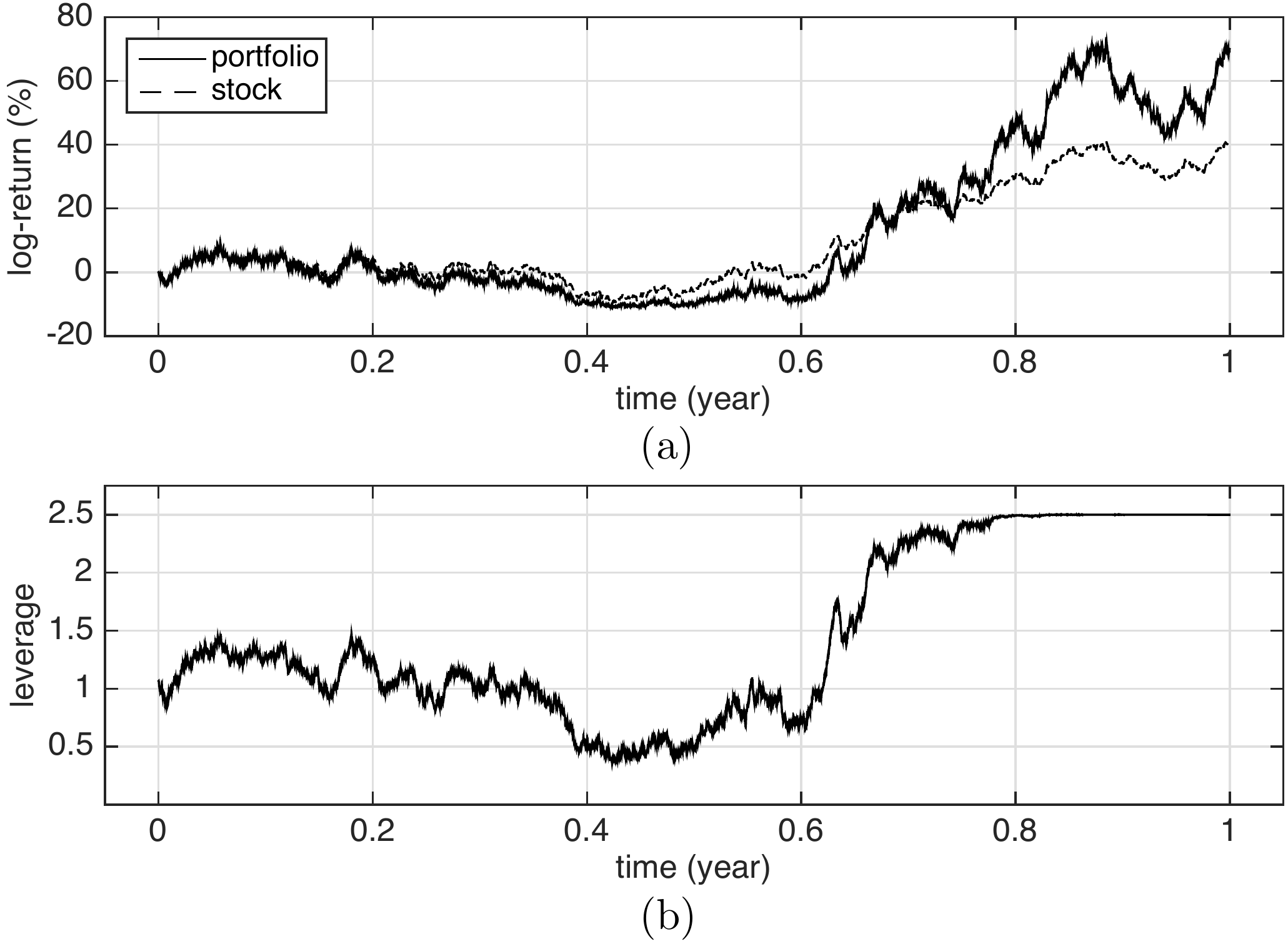}
\caption{(a) A sample path of stock prices and the corresponding portfolio log-return process ($X^{A^\star}$), and (b) the corresponding optimal leverage process ($A^\star$).}
\label{Fig:Pathwise2}
\end{figure}

This qualitative tendency of the optimal strategy to increase in leverage once it has locked in gains is emphasized further by sample paths illustrated in Figure~\ref{Fig:Pathwise}. Here, we illustrate a particular sample path of stock prices (quoted as log-return), as well as the corresponding optimal dynamic leverage process, $A^\star$, and the resulting portfolio log-return process, $X^{A^\star}$. Note that the stock price corresponds to the log-return under the static buy-and-hold strategy, $A(t) \equiv 1$. We observe that early on in the period, the leverage process increases or decreases in sync with overall portfolio returns. However, as it becomes later in the period and the portfolio return is positive, the optimal leverage increases significantly before being capped at a fixed value. The optimal strategy generally does not appear to decrease leverage late in the period, even with stock price declines, unless it is risking falling below the loss threshold seen in the jump in Figure~\ref{Fig:Distribution}.

In Figure~\ref{Fig:Pathwise2}, we illustrate an alternative sample path which emphasizes how the increasing leverage can lead to large returns on the upside. In this path, the leverage process, $A^\star$, initially decreases to lower risk as the portfolio takes initial losses. However, in the latter half of the period, as stock prices rise, the increasing leverage leads to a return on the portfolio which significantly exceeds that of the buy-and-hold strategy. It is this transition from low leverage when avoiding tail losses to high leverage when locking in gains which allows the strategy to maintain a low CVaR while maximizing expected log-return.

The tendency of the optimal dynamic strategy to keep leverage higher than the static strategy unless it is facing losses also helps explain the skew seen in Figure~\ref{Fig:Distribution}. Because the dynamic strategy has the option to decrease its leverage to stop losses, it can achieve a significantly lower CVaR while maintaining a preference for high leverage, which contributes to large returns in positive outcomes. However, there is no such thing as a free lunch; in neutral outcomes, the positive correlation between log-returns and leverage leads to decay in portfolio value from convexity \cite{Perold1988}. In this sense, the optimal dynamic strategy shares many qualitative features with Constant Proportion Portfolio Insurance (CPPI) strategies \cite{Black1992}. This makes sense as CPPI strategies are generally employed to limit downside losses, while maintaining upside gains, using dynamic trading.

In this example, we chose to consider only a single risky asset for ease of interpretation of the strategy. However, it is clear from the generality of Section~\ref{Subection:NumericalApprox} that we could perform the same computations with multiple assets without increasing the size of the state space in the stochastic optimal control sub-problem. In the multiple asset case, the extra complexity appears in the need to solve a quadratic program with more decision variables
 at each step in the solution of the HJB in~\eqref{Eqn:ValueHJB}.

\appendix
\section{Coherent Extremal Risk Measures}\label{app:1}

\begin{proposition}[Coherence]\label{Prop:RiskMeasure}
Suppose the following properties of $f:\mathbb{R}\times\mathbb{R}^m\to\mathbb{R}$ hold:
\begin{itemize}
\item Positive-homogeneity and normalization: $f(ax,ay) = af(x,y)$ for $a > 0$ and $\inf\limits_{y\in\mathbb{R}^m} f(0,y) = 0$;
\item Monotonicity: $x\mapsto f(x,y)$ is non-decreasing for each $y\in\mathbb{R}^m$;
\item Sub-additivity: $f(x_1 + x_2, y_1 + y_2) \leq f(x_1, y_1) + f(x_2, y_2)$ for $x_1,x_2 \in \mathbb{R}$ and $y_1,y_2 \in \mathbb{R}^m$;
and
\item Translation: For every $a\in\mathbb{R}$, there exists an invertible function $\phi:\mathbb{R}^m\to\mathbb{R}^m$ such that for every $(x,y)\in\mathbb{R}\times\mathbb{R}^m$ we have $f(x+a,y) = f(x,\phi(y)) + a$.
\end{itemize}
Then, the function $\rho:L^2(\Omega)\to\mathbb{R}$ defined as
\begin{equation}
\rho(\xi) := \inf\limits_{y\in\mathbb{R}^m}\mathbb{E}\left[f(\xi,y)\right]
\end{equation}
is a coherent (extremal) risk measure.
\end{proposition}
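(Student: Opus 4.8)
The plan is to verify directly that $\rho$ satisfies the four defining properties of a coherent risk measure of Artzner et al.~\cite{Artzner1999} --- monotonicity, translation invariance, positive homogeneity, and sub-additivity --- by matching each axiom to the corresponding structural hypothesis imposed on $f$. Each verification should be short: it combines the relevant property of $f$, the monotonicity (or linearity) of the expectation, and a simple reindexing of the infimum over $y$. Before the four steps I would also record that $\rho$ is real-valued, which is needed so that $\rho$ maps into $\mathbb{R}$ as claimed.

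For \emph{monotonicity}, I would argue that if $\xi_1\le\xi_2$ almost surely then, since $x\mapsto f(x,y)$ is non-decreasing, $f(\xi_1,y)\le f(\xi_2,y)$ pointwise, hence $\mathbb{E}[f(\xi_1,y)]\le\mathbb{E}[f(\xi_2,y)]$ for every $y$; taking the infimum over $y$ on both sides gives $\rho(\xi_1)\le\rho(\xi_2)$. For \emph{translation invariance}, I would use the translation hypothesis to write $\mathbb{E}[f(\xi+a,y)]=\mathbb{E}[f(\xi,\phi(y))]+a$ for the invertible map $\phi$ associated with $a$, pull the constant $a$ outside the infimum, and then observe that since $\phi$ is a bijection of $\mathbb{R}^m$, reindexing $y\mapsto\phi(y)$ leaves $\inf_y\mathbb{E}[f(\xi,\phi(y))]=\inf_y\mathbb{E}[f(\xi,y)]=\rho(\xi)$, yielding $\rho(\xi+a)=\rho(\xi)+a$.

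For \emph{positive homogeneity}, for $a>0$ the map $y\mapsto ay$ is a bijection of $\mathbb{R}^m$, so $\rho(a\xi)=\inf_y\mathbb{E}[f(a\xi,y)]=\inf_y\mathbb{E}[f(a\xi,ay)]=\inf_y\mathbb{E}[a\,f(\xi,y)]=a\,\rho(\xi)$ by positive-homogeneity of $f$; for $a=0$ the normalization $\inf_y f(0,y)=0$ gives $\rho(0)=0$. For \emph{sub-additivity}, I would fix $y_1,y_2$, apply sub-additivity of $f$ pointwise in $\omega$, take expectations to get $\mathbb{E}[f(\xi_1+\xi_2,y_1+y_2)]\le\mathbb{E}[f(\xi_1,y_1)]+\mathbb{E}[f(\xi_2,y_2)]$, hence $\rho(\xi_1+\xi_2)\le\mathbb{E}[f(\xi_1,y_1)]+\mathbb{E}[f(\xi_2,y_2)]$ by taking $y_1+y_2$ as a competitor in the infimum defining $\rho(\xi_1+\xi_2)$; since the right side decouples, minimizing over $y_1$ and $y_2$ independently gives $\rho(\xi_1+\xi_2)\le\rho(\xi_1)+\rho(\xi_2)$. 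Finiteness then comes for free: $\rho(\xi)\le\mathbb{E}[f(\xi,y_0)]<\infty$ for any fixed $y_0$ by the quadratic growth of $f$ and $\xi\in L^2(\Omega)$, while $\rho(\xi)\ge-\rho(-\xi)\ge-\mathbb{E}[f(-\xi,y_0)]>-\infty$ using the sub-additivity just established together with $\rho(0)=0$.

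I do not expect any genuine obstacle here; the argument is a sequence of routine reindexings. The only points that require a moment's care are (i) that the substitutions $y\mapsto\phi(y)$ and $y\mapsto ay$ truly preserve the value of the infimum --- which is precisely why the hypotheses ask for $\phi$ invertible and restrict to $a>0$ --- and (ii) the decoupling of the double infimum in the sub-additivity step, which is legitimate because the majorizing bound holds for \emph{every} pair $(y_1,y_2)$ and separates additively. If anything is delicate it is only the bookkeeping needed to present these four near-identical manipulations cleanly.
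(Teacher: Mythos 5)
Your proposal is correct and follows essentially the same route as the paper's proof: each coherence axiom is verified directly from the corresponding hypothesis on $f$ via expectation and a reindexing of the infimum (the $y\mapsto ay$ substitution for positive homogeneity, the splitting $y=y_1+y_2$ for sub-additivity, and the bijection $\phi$ for translation invariance). The extra remarks on finiteness of $\rho$ are a harmless addition not present in the paper.
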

\begin{proof}
Recall, the four properties of a coherent risk measure  are positive homogeneity, monotonicity, sub-additivity and translational invariance \cite{Artzner1999}. We show below how each of these comes from the assumptions on $f$.
\begin{enumerate}
\item Positive-homogeneity: We compute directly
\begin{equation}\nonumber
\rho(a\xi) = \inf\limits_{y\in\mathbb{R}^m}\mathbb{E}\left[f(a\xi,y)\right]
= \inf\limits_{ay\in\mathbb{R}^m}\mathbb{E}\left[f(a\xi,ay)\right]
 = a \inf\limits_{y\in\mathbb{R}^m} \mathbb{E} \left [f(\xi,y)  \right ]= a \rho(\xi)
\end{equation}
for $a > 0$. When $a = 0$, 
\begin{equation}\nonumber
\rho(0) = \inf\limits_{y\in\mathbb{R}^m}\mathbb{E}\left[f(0,y)\right] = \inf\limits_{y\in\mathbb{R}^m}f(0,y) = 0.
\end{equation}

\item Monotonicity: Let $\xi,\xi'\in L^2(\Omega)$ such that $\xi \leq \xi'$ almost surely. Then, 
\begin{equation}\nonumber
\rho(\xi) = \inf\limits_{y\in\mathbb{R}^m}\mathbb{E}\left[f(\xi,y)\right]\leq\inf\limits_{y\in\mathbb{R}^m}\mathbb{E}\left[f(\xi',y)\right] = \rho(\xi').
\end{equation}

\item Sub-additivity: Fix $\xi_1, \xi_2 \in L^2(\Omega)$. We compute
\begin{equation}\nonumber
\begin{split}
\rho(\xi_1 + \xi_2) = \inf_{y \in \mathbb{R}^m}
\mathbb{E} [f(\xi_1+ \xi_2, y)] &= 
\inf_{y_1, y_2 \in \mathbb{R}^m}
\mathbb{E} [f(\xi_1+ \xi_2, y_1 + y_2)]\\
&\leq 
\inf_{y_1, y_2 \in \mathbb{R}^m}
\mathbb{E} [f(\xi_1, y_1) + f(\xi_2, y_2)]
= \rho(\xi_1) + \rho(\xi_2).
\end{split}
\end{equation}

\item Translational-invariance: Let $\xi\in L^2(\Omega)$ and $a\in\mathbb{R}$. Then
\begin{equation}\nonumber
\rho(\xi+a) = \inf\limits_{y\in\mathbb{R}^m}\mathbb{E}\left[f(\xi+a,y)\right] = \inf\limits_{y\in\mathbb{R}^m}\mathbb{E}\left[f(\xi,\phi(y))+a\right] = \rho(\xi)+a.
\end{equation}
\end{enumerate}
\end{proof}

{{}\section{Proof of Theorem \ref{Thm:Convexity}}

\begin{proof}
Let $y,y'\in\mathbb{R}^m$ and $\theta \in[0,1]$. For any $\epsilon >0$, let $A,A'\in\mathcal{A}$ be $\epsilon$-suboptimal controls such that
\begin{equation}\nonumber
V(y) + \epsilon \geq \mathbb{E}\left[f(g(X^A_T),y)\right]\text{ and }V(y') + \epsilon \geq \mathbb{E}\left[f(g(X^{A'}_T),y')\right].
\end{equation}
By Assumption~\ref{Assumption:Convexity}, we obtain
\begin{eqnarray}\nonumber
V(\theta y+(1-\theta)y') & \leq & \mathbb{E}\left[f(g(X^{\theta A+(1-\theta)A'}_T),\theta y+(1-\theta)y')\right] \\\nonumber
& \leq & \mathbb{E}\left[\theta f(g(X^A_T),y)+(1-\theta)f(g(X^{A'}_T),y')\right]\\\nonumber
& \leq & \theta V(y) + (1-\theta)V(y') + \epsilon.\nonumber
\end{eqnarray}
Because $\epsilon$ was arbitrary, the convexity of $V$ follows.
\end{proof}
}

\section{Proof of Proposition~\ref{Prop:InfConvPreseveresConvexity}}\label{app:Prop7}
\begin{proof}
Recall that Assumption~\ref{Assumption:Convexity} states that the map
\begin{equation}\nonumber
(A,y) \mapsto f(g(X^A_T),y)
\end{equation}
is jointly convex, almost surely. We observe that the map
\begin{equation}\nonumber
(A,y,z) \mapsto f(g(X^A_T),z)+\frac{\|y-z\|^2}{2\epsilon}
\end{equation}
is almost surely jointly convex as a sum of convex functions. Therefore,
\begin{equation}\nonumber
(A,y)\mapsto f_\epsilon(g(X^A_T),y) = \inf\limits_{z\in\mathbb{R}^m}\left[f(g(X^A_T),z)+\frac{\|y-z\|^2}{2\epsilon}\right]
\end{equation}
is almost surely jointly convex by the same type of argument as in the proof of Theorem~\ref{Thm:Convexity}.
\end{proof}

\bibliographystyle{siam}

\bibliography{reference}

\end{document}